\documentclass[a4paper,12pt]{article}
\title{\bf{Curve counting theories via stable objects~$\mII$:
DT/ncDT flop formula}
}
\date{}
\author{Yukinobu Toda}

\usepackage{makeidx}

\usepackage{latexsym}
\usepackage{amscd}
\usepackage{amsmath}
\usepackage{amssymb}
\usepackage{amsthm}
\usepackage{float}
\usepackage[all]{xy}
\usepackage{graphicx}

\usepackage{array}
\usepackage{amscd}
\usepackage[all]{xy}
\usepackage{makeidx}
\usepackage{latexsym}
\DeclareFontFamily{U}{rsfs}{%
\skewchar\font127}
\DeclareFontShape{U}{rsfs}{m}{n}{%
<-6>rsfs5<6-8.5>rsfs7<8.5->rsfs10}{}
\DeclareSymbolFont{rsfs}{U}{rsfs}{m}{n}
\DeclareSymbolFontAlphabet
{\mathrsfs}{rsfs}
\DeclareRobustCommand*\rsfs{%
\@fontswitch\relax\mathrsfs}
\setlength{\oddsidemargin}{0cm}
\setlength{\evensidemargin}{0cm}
\setlength{\textwidth}{16cm}
\setlength{\textheight}{23cm}
\setlength{\topmargin}{-0.3cm}

\theoremstyle{plain}
\newtheorem{thm}{Theorem}[section]
\newtheorem{prop}[thm]{Proposition}
\newtheorem{lem}[thm]{Lemma}

\newtheorem{defi}[thm]{Definition}
\newtheorem{rmk}[thm]{Remark}

\newtheorem{step}{Step}
\newtheorem{sstep}{Step}
\newtheorem{ssstep}{Step}

\newtheorem{prop-defi}[thm]{Proposition-Definition}
\newtheorem{thm-defi}[thm]{Theorem-Definition}
\newtheorem{lem-defi}[thm]{Lemma-Definition}

\newtheorem{assum}[thm]{Assumption}
\newtheorem{conj}[thm]{Conjecture}

\newcommand{\mII}{\mathrm{I}\hspace{-.1em}\mathrm{I}}

\newdimen\argwidth
\def\db[#1\db]{
 \setbox0=\hbox{$#1$}\argwidth=\wd0
 \setbox0=\hbox{$\left[\box0\right]$}
  \advance\argwidth by -\wd0
 \left[\kern.3\argwidth\box0 \kern.3\argwidth\right]}

\newcommand{\kakkoS}{\mathbb{C}\db[S\db]}
\newcommand{\kakkoSl}{\mathbb{C}\db[S_{\lambda}\db]}

\newcommand{\kakkoT}{\mathbb{C}\db[T\db]}

\newcommand{\aA}{\mathcal{A}}
\newcommand{\bB}{\mathcal{B}}
\newcommand{\cC}{\mathcal{C}}
\newcommand{\dD}{\mathcal{D}}
\newcommand{\eE}{\mathcal{E}}
\newcommand{\fF}{\mathcal{F}}

\newcommand{\hH}{\mathcal{H}}

\newcommand{\lL}{\mathcal{L}}
\newcommand{\mM}{\mathcal{M}}

\newcommand{\oO}{\mathcal{O}}
\newcommand{\pP}{\mathcal{P}}
\newcommand{\qQ}{\mathcal{Q}}

\newcommand{\tT}{\mathcal{T}}
\newcommand{\uU}{\mathcal{U}}
\newcommand{\vV}{\mathcal{V}}

\newcommand{\lr}{\longrightarrow}

\newcommand{\Supp}{\mathop{\rm Supp}\nolimits}
\newcommand{\Hom}{\mathop{\rm Hom}\nolimits}

\newcommand{\dR}{\mathbf{R}}

\newcommand{\id}{\textrm{id}}

\newcommand{\ch}{\mathop{\rm ch}\nolimits}
\newcommand{\rk}{\mathop{\rm rk}\nolimits}
\newcommand{\td}{\mathop{\rm td}\nolimits}
\newcommand{\Ext}{\mathop{\rm Ext}\nolimits}
\newcommand{\Spec}{\mathop{\rm Spec}\nolimits}

\newcommand{\Coh}{\mathop{\rm Coh}\nolimits}

\newcommand{\cneq}{\mathrel{\raise.095ex\hbox{:}\mkern-4.2mu=}}
\newcommand{\eqcn}{\mathrel{=\mkern-4.5mu\raise.095ex\hbox{:}}}

\newcommand{\NE}{\mathop{\rm NE}\nolimits}

\newcommand{\gr}{\mathop{\rm gr}\nolimits}

\newcommand{\Aut}{\mathop{\rm Aut}\nolimits}

\newcommand{\Ex}{\mathop{\rm Ex}\nolimits}

\newcommand{\Stab}{\mathop{\rm Stab}\nolimits}

\newcommand{\oPPer}{\mathop{\rm ^{0}Per}\nolimits}

\newcommand{\iPPer}{\mathop{\rm ^{-1}Per}\nolimits}

\newcommand{\ppPPer}{\mathop{^{{p}}\rm{Per}}\nolimits}

\newcommand{\pH}{\mathop{^{{p}}\mathcal{H}}\nolimits}

\newcommand{\pB}{\mathop{^{{p}}\mathcal{B}}\nolimits}
\newcommand{\oB}{\mathop{^{{0}}\mathcal{B}}\nolimits}
\newcommand{\iB}{\mathop{^{{-1}}\mathcal{B}}\nolimits}
\newcommand{\pPhi}{\mathop{^{p}\hspace{-.05em}\Phi}\nolimits}

\newcommand{\ppm}{\mathop{^{p}m}\nolimits}

\newcommand{\pchi}{\mathop{^{p}\chi}\nolimits}

\newcommand{\pF}{\mathop{^{{p}}\mathcal{F}}\nolimits}
\newcommand{\oF}{\mathop{^{{0}}\mathcal{F}}\nolimits}
\newcommand{\iF}{\mathop{^{{-1}}\mathcal{F}}\nolimits}
\newcommand{\pT}{\mathop{^{{p}}\mathcal{T}}\nolimits}
\newcommand{\oT}{\mathop{^{{0}}\mathcal{T}}\nolimits}

\newcommand{\pU}{\mathop{^{{p}}\mathcal{U}}\nolimits}
\newcommand{\oU}{\mathop{^{{0}}\mathcal{U}}\nolimits}
\newcommand{\iU}{\mathop{^{{-1}}\mathcal{U}}\nolimits}

\newcommand{\pmV}{\mathop{^{{p}}\mathcal{V}}\nolimits}
\newcommand{\omV}{\mathop{^{{0}}\mathcal{V}}\nolimits}
\newcommand{\imV}{\mathop{^{{-1}}\mathcal{V}}\nolimits}

\newcommand{\pS}{\mathop{^{{p}}S}\nolimits}

\newcommand{\pTT}{\mathop{^{{p}}T}\nolimits}

\newcommand{\pE}{\mathop{^{p}\hspace{-.05em}\mathcal{E}}\nolimits}
\newcommand{\oE}{\mathop{^{0}\hspace{-.05em}\mathcal{E}}\nolimits}
\newcommand{\iE}{\mathop{^{-1}\hspace{-.05em}\mathcal{E}}\nolimits}
\newcommand{\pV}{\mathop{^{{p}}{V}}\nolimits}
\newcommand{\oV}{\mathop{^{{0}}{V}}\nolimits}
\newcommand{\iV}{\mathop{^{{-1}}{V}}\nolimits}

\newcommand{\pAA}{\mathop{^{p}\hspace{-.2em}A}\nolimits}
\newcommand{\oAA}{\mathop{^{0}\hspace{-.2em}A}\nolimits}

\newcommand{\pPPer}{\mathop{\rm ^{\mathit{p}}Per}\nolimits}

\newcommand{\DT}{\mathop{\rm DT}\nolimits}
\newcommand{\PT}{\mathop{\rm PT}\nolimits}

\newcommand{\End}{\mathop{\rm End}\nolimits}

\newcommand{\Imm}{\mathop{\rm Im}\nolimits}
\newcommand{\imm}{\mathop{\rm im}\nolimits}

\newcommand{\Ker}{\mathop{\rm ker}\nolimits}
\newcommand{\Ree}{\mathop{\rm Re}\nolimits}

\newcommand{\tr}{\mathop{\rm tr}\nolimits}
\newcommand{\ex}{\mathop{\rm ex}\nolimits}

\newcommand{\length}{\mathop{\rm length}\nolimits}
\newcommand{\Sing}{\mathop{\rm Sing}\nolimits}
\newcommand{\cl}{\mathop{\rm cl}\nolimits}
\begin{document}
\maketitle
\begin{abstract}
The goal of the present paper
is to show the
transformation formula of
Donaldson-Thomas invariants on smooth projective
Calabi-Yau 3-folds  
under birational transformations
via categorical method.
We also generalize the non-commutative Donaldson-Thomas 
invariants, introduced by B.~Szendr{\H o}i in a local 
$(-1, -1)$-curve
example, to an arbitrary flopping contraction 
from a smooth projective Calabi-Yau 3-fold.
The transformation formula between such invariants and 
the usual Donaldson-Thomas invariants
are also established. 
These formulas will be deduced 
from the wall-crossing formula 
in the space of weak stability conditions on 
the derived category.  
\end{abstract}
\section{Introduction}
This paper is a sequel of the author's previous 
paper~\cite{Tcurve1}, and study the generating series
of Donaldson-Thomas (DT for short) type invariants 
via categorical method. 
The main result is to show the 
transformation formula of our generating series
under birational transformations of 
Calabi-Yau 3-folds, and the generalized
McKay correspondence introduced 
by Van den Bergh~\cite{MVB}. 
We use the space of weak stability conditions on 
triangulated categories, which generalizes
Bridgeland's stability conditions~\cite{Brs1}, and 
the wall-crossing formula of the generating series
due to Joyce and Song~\cite{JS}, Kontsevich and Soibelman~\cite{K-S}. 

\subsection{Motivation}
Let $X$ be a smooth projective Calabi-Yau 
3-fold over $\mathbb{C}$, i.e. 
\begin{align*}
\bigwedge^3 T_X^{\vee} \cong \oO_X, \quad
H^1(X, \oO_X)=0.
\end{align*}
Let 
$$\phi\colon X^{+} \dashrightarrow X,$$
be a birational map between smooth projective 
Calabi-Yau 3-folds. 
The purpose of this paper is to compare curve counting 
theories on $X$ and $X^{+}$ via categorical method, 
i.e. effectively use an equivalence of bounded 
derived categories of coherent sheaves
by Bridgeland~\cite{Br1},
\begin{align}\label{Deq}
\Phi \colon D^b(\Coh(X^{+})) \stackrel{\sim}{\lr}
 D^b(\Coh(X)).
 \end{align}
The problem of comparing curve counting
invariants under 
birational transformations
 has been studied in~\cite{Morr}, \cite{LiRu}, \cite{LY}, 
\cite{Komi}
for Gromov-Witten invariants and in~\cite{HL}
for DT invariants, via explicit 
calculations or using 
J.~Li's degeneration formula.
The categorical approach for the above problem 
is studied 
by the author in~\cite{ToBPS} for (a kind of approximation of )
Gopakumar-Vafa invariants. In this paper, 
we give a categorical understanding of transformation formula of 
DT invariants 
under birational maps
using the equivalence (\ref{Deq}). 

Recall that a flop is a birational map 
$\phi \colon X^{+} \dashrightarrow X$ which 
fits into a diagram, 
\begin{align}\label{fig:flop0}\xymatrix{
X^{+} \ar[dr]_{f^{+}} \ar@{.>}[rr]^{\phi} & & \ar[dl]^{f} X \\
& Y, &
}\end{align}
where $Y$ is a projective 3-fold with only Gorenstein 
singularities, $f$, $f^{+}$ are birational morphisms 
isomorphic in codimension one, and
the relative Picard numbers of $f$, $f^{+}$ are one
respectively. 
(cf.~Definition~\ref{def:flop}.)
It is well-known that any birational map 
$\phi \colon X^{+} \dashrightarrow X$ between 
smooth projective Calabi-Yau 3-folds is decomposed 
into a composition of flops, thus 
our problem is reduced to the case of a flop. 
 In this case, 
M.~Van den Bergh~\cite{MVB} shows that there is 
a sheaf of non-commutative algebras $A_Y$ on $Y$ 
and a derived equivalence, 
\begin{align}\label{genMc}
\Psi \colon D^b(\Coh(A_Y)) \stackrel{\sim}{\to}
D^b(\Coh(X)). 
\end{align}
(In fact there are two such sheaves of non-commutative 
algebras $\pAA_Y$ for $p=0, -1$. Here we 
put $A_Y=\oAA_Y$. See Theorem~\ref{thm:nc}.) 
We also introduce an analogue of DT-invariant
for the non-commutative scheme $(Y, A_Y)$, 
which generalizes Szendr{\H o}i's
 non-commutative DT
(ncDT for short) invariant for a 
local $(-1, -1)$-curve example.
It 
is introduced in~\cite{Sz} and 
some other local examples are studied in~\cite{Young2}, 
\cite{MR}, \cite{Ng}.
Our invariant is interpreted as a 
globalization of the
local ncDT-invariant.  
We consider the generating series of our invariants,
 and establish the formula 
which relates global ncDT-invariants of $(Y, A_Y)$ 
to usual DT-invariants of $X$ and $X^{+}$. 
This result answers the 
  problem 
addressed by Szendr{\H o}i~\cite[Section~3.5]{Sz}.

\subsection{Donaldson-Thomas theory}
Let us briefly recall the Donaldson-Thomas theory.
For a smooth projective Calabi-Yau 3-fold $X$, take 
$\beta \in H_2(X, \mathbb{Z})$ 
and $n\in \mathbb{Z}$.
Let
$I_n(X, \beta)$ be the Hilbert scheme of 
curves on $X$, 
$$I_{n}(X, \beta)=\left\{\begin{array}{ll} \mbox{ subschemes }
C\subset X, \dim C \le 1 \\
\mbox{ with } [C]=\beta, \ \chi(\oO_C)=n. \end{array}
\right\}.
$$
The moduli space $I_n(X, \beta)$ is projective and has a 
symmetric obstruction theory~\cite{Tho}. The associated 
virtual 
fundamental cycle has virtual dimension zero, and the
integration along it defines
the DT-invariant, 
$$I_{n, \beta}=\int_{[I_n(X, \beta)^{\rm{vir}}]}1 \in \mathbb{Z}.$$
Another way of defining DT-invariant is to use Behrend's 
microlocal function~\cite{Beh}. For an arbitrary scheme $M$, 
Behrend
associates a constructible function, 
$$\nu_{M} \colon M \to \mathbb{Z}. $$
The function $\nu_M$ has the property that if
 $M$ has a symmetric obstruction theory, then 
 the integration of the virtual fundamental cycle coincides 
 with the weighted Euler characteristic, 
\begin{align}\label{weight}
\int_{[M^{\rm{vir}}]}1 =\chi(M, \nu_M) \cneq \sum_{n\in \mathbb{Z}}
n\chi(\nu_M^{-1}(n)).
\end{align}
We consider the generating series, 
\begin{align}\label{DT(X)}
\DT(X)&\cneq\sum_{n, \beta}I_{n, \beta}x^n y^{\beta}, \\
\label{DT_0}
\DT_{0}(X)&\cneq \sum_{n}I_{n, 0}x^n
=M(-x)^{\chi(X)}, \\
\label{DT(X/Y)}
\DT(X/Y)&\cneq \sum_{n, f_{\ast}\beta=0}I_{n, \beta}x^n y^{\beta},
\end{align}
where $f\colon X\to Y$ is a
flopping contraction as in 
the diagram (\ref{fig:flop}), and $M(x)$ is the 
MacMahon function, 
$$M(x)=\prod_{k\ge 1}(1-x^k)^{-k}.$$
The formula (\ref{DT_0}) for
 $\DT_{0}(X)$ is established in~\cite{BBr},
\cite{Li}, \cite{LP}.   
By the MNOP conjecture~\cite{MNOP}, the reduced series 
$$\DT'(X)=\frac{\DT(X)}{\DT_0(X)}, \quad 
\DT'(X/Y)=\frac{\DT(X/Y)}{\DT_0(X)},$$
are expected to coincide with  
the generating series of Gromov-Witten invariants after 
a suitable variable change.

\subsection{Non-commutative Donaldson-Thomas theory}
The following example is worked out by 
Szendr{\H o}i~\cite{Sz}.
Let $Y$ be the conifold singularity, 
$$Y=(xy+zw=0) \subset \mathbb{C}^4,$$
and $f\colon X\to Y$, $f^{+} \colon 
X^{+} \to Y$ 
blow-ups 
at ideals $(x, z)\subset \oO_Y$, 
$(x, w) \subset \oO_Y$ 
respectively. 
This gives an example of 
a (local) flop.
\begin{align}\label{fig:flop2}\xymatrix{
X^{+} \ar[dr]_{f^{+}} \ar@{.>}[rr]^{\phi} & & \ar[dl]^{f} X \\
& Y. &
}\end{align}
The exceptional loci of $f$, $f^{+}$ are smooth 
rational curves $C\subset X$, $C^{+} \subset X^{+}$ 
whose normal bundles are isomorphic to $\oO_{\mathbb{P}^1}(-1)^{\oplus 2}$. 
There is a local version of the
 equivalence (\ref{genMc}), 
 and the $\oO_Y$-algebra $A_Y$ is 
  the path algebra of the following quiver, 
\[ \begin{xy}
(0,0)="\bullet"*{e_1 \ \bullet}, +/r4cm/="B"*{\bullet \ e_2},
\ar^{a_1, a_2}@/^0.3cm/ @<2mm> "\bullet"+/r.4cm/;"B"+/l.4cm/
\ar^{}@/^0.3cm/ "\bullet"+/r.4cm/;"B"+/l.4cm/
\ar^{}@/^0.3cm/  "B"+/l.4cm/;"\bullet"+/r.4cm/
\ar^{b_1, b_2}@/^0.3cm/ @<2mm> "B"+/l.4cm/;"\bullet"+/r.4cm/
\end{xy} \]
with relation defined by the derivatives of the 
potential $W$, where
$$W=a_1 b_1 a_2 b_2 -a_1 b_2 a_2 b_1.$$
For a dimension vector $v=(v_1, v_2)\in \mathbb{Z}^{\oplus 2}$, the
 moduli space of framed $A_Y$-representations 
$M=(M_{\bullet}, a_{\bullet}, b_{\bullet}, u)$
is denoted by $\mM_v$. 
Here 
$v=(\dim M_1, \dim M_2)$ and 
$u\in M_1$ generates $M_1 \oplus M_2$ as an $A_Y$-module. 
Using the potential $W$, it can be shown that 
$\mM_v$ is written as a critical locus of a
holomorphic function on some complex manifold. 
In particular there is a symmetric perfect obstruction theory 
on $\mM_v$, (cf.~\cite[Theorem~1.3.1]{Sz},) hence 
the zero dimensional virtual fundamental cycle on it.  
The integration of the virtual fundamental cycle yields 
the ncDT invariant, 
\begin{align}\label{Ainv}
A_{n, m[C]}=\int_{[\mM_{(n, m+n)}]^{\rm{vir}}}1 \in \mathbb{Z}.
\end{align}
The transformation rule between 
numerical classes on
$X$ and dimension vectors on $A_Y$ is determined by 
the equivalence (\ref{genMc}). 
We have the associated generating series, 
$$\DT_{0}(A_Y)=\sum_{n, m}A_{n, m[C]}x^{n}y^{m}.$$
The following formula is conjectured by Szendr\H{o}i~\cite{Sz}
and proved by Young~\cite{Young1}, Nagao and Nakajima~\cite{NN}.
\begin{thm} 
\emph{\bf{\cite{Young1}, \cite{NN}}}
We have the formula
\begin{align}\notag
\DT_{0}(A_Y)&=M(-x)^2 \prod_{k\ge 1}(1-(-x)^k y)^{k} 
\prod_{k\ge 1}(1-(-x)^k y^{-1})^{k}, \\
\label{DT(A)=product}
&=\DT(X/Y) \cdot \phi_{\ast}\DT'(X^{+}/Y).
\end{align}
Here $\phi_{\ast}$ is the variable change 
$\phi_{\ast}(x, y)=(x, y^{-1})$. 
\end{thm}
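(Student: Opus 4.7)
The plan is to derive the second equality via wall-crossing in the space of weak stability conditions on the triangulated subcategory $\dD_Y \subset D^b(\Coh(X))$ of complexes supported in fibers of $f$, and then deduce the first (product) equality by explicit evaluation of the two factors using the known resolved-conifold DT series. Under the Van den Bergh equivalence $\Psi$, the moduli $\mM_v$ of framed $A_Y$-representations becomes a moduli of framed perverse ideal sheaves with heart $\oAA := \Psi(\Coh(A_Y))$, which are precisely the stable objects for a distinguished weak stability condition $\sigma_{\mathrm{nc}}$ on $\dD_Y$. The ordinary Hilbert schemes defining $\DT(X/Y)$ are, in turn, the moduli of stable objects for a "large-volume" weak stability condition $\sigma_{\mathrm{DT}}$ whose heart is $\Coh(X) \cap \dD_Y$.

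The first step is to choose a continuous one-parameter family $\{\sigma_t\}_{t\in [0,1]}$ of weak stability conditions on $\dD_Y$ interpolating between $\sigma_{\mathrm{nc}}$ and $\sigma_{\mathrm{DT}}$. Along this path, the generating series of framed invariants remains locally constant except at a discrete set of walls, where the change is controlled by the Joyce-Song / Kontsevich-Soibelman wall-crossing formula applied to the generalized DT invariants of semistable objects of pure curve class.

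The second step is to enumerate the walls and compute their contributions. The two simple objects of $\oAA$ of curve class are $S_1 = \oO_C(-1)[1]$ and $S_2 = \oO_C$, corresponding to the two vertices of the conifold quiver. A direct analysis shows that the walls in the chamber $(0,1)$ are labeled by $k \ge 1$, and that the strictly semistable locus at the $k$-th wall is (up to shift and framing) an iterated self-extension of length $k$ of the simple $S_1$, twisted by the ideal/line-bundle structure on the $(-1,-1)$-curve $C$. The local Behrend-function calculation for the $(-1,-1)$-curve then shows that crossing the $k$-th wall multiplies the generating series by $(1-(-x)^k y^{-1})^k$. Summing gives
\[
\DT_0(A_Y) = \DT(X/Y) \cdot \prod_{k\ge 1}(1-(-x)^k y^{-1})^{k}.
\]
The remaining factor is identified with $\phi_*\DT'(X^+/Y)$ by the symmetric argument applied to the other resolution $X^+$: the reduced series $\DT'(X^+/Y)$ equals $\prod_{k\ge 1}(1-(-x)^k y^+)^k$ in the variable $y^+$ dual to $[C^+]$, and the pushforward $\phi_* = \Psi \circ \Phi^{-1}$ sends $[C^+] \mapsto -[C]$, i.e.\ $y^+ \mapsto y^{-1}$. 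Combining with the formula $\DT(X/Y) = M(-x)^2 \prod_{k\ge 1}(1-(-x)^k y)^k$ for the resolved conifold then yields the first (product) equality.

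The main obstacle is the wall-crossing step: establishing that $\{\sigma_t\}$ is a genuine family of weak stability conditions with a finite-type Harder-Narasimhan property on $\dD_Y$, identifying the destabilizing semistable objects at each wall with symmetric powers along the $(-1,-1)$-curve, and evaluating the Behrend-function-weighted Euler characteristic in closed form to extract the factor $(1-(-x)^k y^{-1})^k$. All three rely on the categorical framework — particularly the perverse $t$-structure $\oAA_Y$ and the control of framed moduli under variation of weak stability — developed earlier in the paper.
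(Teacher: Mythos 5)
First, a point of orientation: the paper does not prove this statement. It is quoted from Young and Nagao--Nakajima, and the body of the paper proves instead the generalization to arbitrary flopping contractions (Theorem~\ref{thm:form2}), of which this conifold formula is the special case. Your wall-crossing strategy is essentially the Nagao--Nakajima proof, and it is also the same philosophy the paper uses in the general case, so the overall route is legitimate. The main structural differences from the paper's treatment are: the paper crosses a \emph{single} wall inside $\pU_{X/Y}$ (rotating $z_0$ past $z_1$, so that all classes $(n,\beta)$ with $n-B\beta>0$, $f_{\ast}\beta=0$ destabilize simultaneously) and connects the non-commutative chamber to the trivial series $1$ rather than directly to $\DT(X/Y)$; the factorization as $\DT(X/Y)\cdot\phi_{\ast}\DT'(X^{+}/Y)$ is then obtained by a separate wall-crossing evaluation of $\DT(X/Y)$ itself (Theorem~\ref{thm:form}) together with the DT/PT correspondence, whereas you import the resolved-conifold product formula for $\DT(X/Y)$ and $\DT'(X^{+}/Y)$ as known input. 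That is acceptable for the conifold but is what the general argument is designed to avoid.

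The genuine gap is in your second step, the enumeration of wall contributions. The destabilizing objects at the $k$-th wall are not ``iterated self-extensions of length $k$ of $S_1=\oO_C(-1)[1]$'': such an object has class $(\chi,\beta)=(0,-k[C])$, so it could only produce a factor in $y^{-k}$ with no power of $x$, never $(1-(-x)^k y^{-1})^{k}$. The wall indexed by $k$ is the locus where the central charge of the class $(k,-[C])$ aligns with that of $\oO_X$; the semistable objects responsible for the jump are the rigid object of that class (an extension of $S_1$ by $k$ copies of the point class, i.e. $\oO_C(k-1)[1]$ up to convention) together with its $m$-fold self-extensions of class $(mk,-m[C])$ for all $m\ge 1$. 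It is the product over $m$ of the Joyce--Song factors $\exp\bigl((-1)^{mk-1}mk\,N_{mk,m[C]}x^{mk}y^{-m}\bigr)$ with $N_{mk,m[C]}=1/m^2$ that assembles into $(1-(-x)^k y^{-1})^{k}$; you have conflated the wall index $k$ with the multiple-cover index $m$, and without the correct identification the Behrend-weighted computation you appeal to cannot be carried out. A secondary issue: your ambient category $\dD_Y$ of complexes supported on fibers of $f$ cannot carry the framed objects, since the framing requires $\oO_X$ (equivalently the projective generator $\pAA_Y'$) to be an object of the category together with a chosen phase for it; this is exactly why the paper works in $\dD_X=\langle \oO_X, \Coh_{\le 1}(X)\rangle_{\tr}$ and imposes the condition (\ref{phase1}) on $\oO_X$ in Assumption~\ref{assum}.
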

Let us return to the situation 
of an arbitrary flopping contraction 
$f\colon X\to Y$ from a smooth projective Calabi-Yau 3-fold $X$. 
For $\beta \in H_2(X, \mathbb{Z})$ and $n\in \mathbb{Z}$, 
we will introduce a global version of the ncDT invariant, 
$$A_{n, \beta}\in \mathbb{Z}, $$
in Definition~\ref{def:glncDT} as a generalization of 
the invariant (\ref{Ainv}). 
The invariant $A_{n, \beta}$ counts cyclic $A_Y$-modules $F$,
satisfying $\dim \Supp \Psi(F) \le 1$ and 
$$[\Psi(F)]=\beta, \quad \chi(\Psi(F))=n, $$
via the equivalence (\ref{genMc}).  
If $f_{\ast}\beta \neq 0$, then such a cyclic 
$A_Y$-module is not of finite dimension as a $\mathbb{C}$-vector 
space. The 
associated generating series are defined by 
\begin{align}
\label{DTAY}
\DT(A_Y)&\cneq\sum_{n, \beta}A_{n, \beta}x^n y^{\beta}, \\
\label{DT0AY}
\DT_{0}(A_Y)&\cneq \sum_{n, f_{\ast}\beta=0}A_{n, \beta}x^n y^{\beta}.
\end{align}

\subsection{Main result}
Let $f\colon X\to Y$ be a flopping contraction 
from a smooth projective Calabi-Yau 3-fold $X$ to 
a singular 3-fold $Y$, and 
$\phi \colon X^{+} \dashrightarrow X$ 
its flop as in the diagram (\ref{fig:flop0}). 
In order to apply Joyce-Song's work~\cite{JS} 
to our situation, we assume the following 
conjecture. 
\begin{conj}\label{conj:BGint}
Let $\mM$ be the moduli stack of 
objects $E\in D^b \Coh(X)$ satisfying 
\begin{align*}
\Ext^{<0}(E, E)=0.
\end{align*} For
any $[E] \in \mM$, 
let $G$ be a maximal reductive 
subgroup in $\Aut(E)$.
Then there exists a $G$-invariant analytic 
open neighborhood $V$ of $0$ in 
$\Ext^1(E, E)$, 
a $G$-invariant holomorphic function $f\colon V\to \mathbb{C}$
with $f(0)=df|_{0}=0$, and a smooth morphism 
of complex analytic stacks
\begin{align*}
\Phi \colon [\{df=0\}/G] \to \mM,
\end{align*}
of relative dimension $\dim \Aut(E)- \dim G$. 
\end{conj}
The above conjecture 
is a derived category version of~\cite[Theorem~5.3]{JS}
and proved for $E \in \Coh(X)$ in~\cite[Theorem~5.3]{JS}.
Also a similar result is announced 
by Behrend-Getzler~\cite{BG}.
Using the arguments of~\cite[Section~5]{Tcurve1}
and assuming Conjecture~\ref{conj:BGint}, 
we will prove the following.

\begin{thm}\label{conj:main}
\emph{\bf{[Theorem~\ref{thm:form}, Theorem~\ref{thm:form2},
Theorem~\ref{thm:global}]}}
Assuming Conjecture~\ref{conj:BGint}, 
we have the following formula, 
\begin{align}
\label{conj2}
\DT(X/Y)&=i\circ \phi_{\ast}\DT(X^{+}/Y), \\
\label{conj1}
\DT_0(A_Y)&=\DT(X/Y) \cdot \phi_{\ast}\DT'(X^{+}/Y), \\
\label{conj3}
\frac{\DT(X)}{\DT(X/Y)}&=
\frac{\DT(A_Y)}{\DT_0(A_Y)}=\phi_{\ast}
\frac{\DT(X^{+})}{\DT(X^{+}/Y)}.
\end{align}
Here the above generating series are 
defined in (\ref{DT(X)}), (\ref{DT(X/Y)}), (\ref{DTAY}), (\ref{DT0AY}), 
and $\phi_{\ast}$, $i$ are variable changes given by 
$\phi_{\ast}(\beta, n)=(\phi_{\ast}\beta, n)$
and $i(\beta, n)=(-\beta, n)$. 
\end{thm}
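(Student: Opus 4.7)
The plan is to interpret the three generating series $\DT(X/Y)$, $\phi_{\ast}\DT(X^{+}/Y)$ and $\DT_{0}(A_Y)$ as Behrend-weighted counts of framed objects in three distinct hearts sitting inside the common derived category $D^b(\Coh(X))$, and then to connect the corresponding chambers through wall-crossing in a one-parameter family of weak stability conditions, applying the framework developed in~\cite[Section~8]{Tcurve1} (which rests on~\cite{JS} and~\cite{BG}). The relevant hearts are $\Coh(X)$, the perverse heart $\oPPer(X/Y)\simeq \Psi(\Coh(A_Y))$, and $\iPPer(X/Y)\simeq \Phi(\Coh(X^{+}))$; these are related by torsion-pair tilts whose torsion classes consist of sheaves set-theoretically supported on the exceptional fibers of $f\colon X\to Y$.

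First I would reformulate each DT/ncDT counting problem as counting stable pairs, i.e.\ objects framed by $\oO_X$, in the appropriate heart. The ideal sheaf $\iI_{C/X}$ is $\oO_X$-stable in one chamber, a cyclic $A_Y$-module corresponds under $\Psi$ to an $\oO_X$-framed object in $\oPPer(X/Y)$, and the flop transform $\Phi(\iI_{C^{+}/X^{+}})$ is the stable pair object in the third chamber. With this description the generating series on both sides become outputs of the same stable-pair machinery, evaluated in different chambers of the space of weak stability conditions on $D^b(\Coh(X))$.

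Next I would invoke the wall-crossing formula of Joyce-Song/Kontsevich-Soibelman in its weak-stability incarnation from~\cite[Section~8]{Tcurve1}: crossing a wall multiplies the generating series by an exponential product of Behrend-weighted Euler characteristics of moduli of fiber-supported semistable objects, with the precise factors dictated by the superpotential of $A_Y$ on the perverse side. Crossing from the $\Coh(X)$ chamber to $\oPPer(X/Y)$ picks up the factor $\DT(X/Y)$, and crossing further into $\iPPer(X/Y)$ contributes $\phi_{\ast}\DT'(X^{+}/Y)$, giving (\ref{conj1}). Formula (\ref{conj2}) then follows by comparing the two local wall-crossings, the involution $i$ arising because the identification of numerical classes under $\Phi$ flips the sign of fiber-supported curve classes. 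For (\ref{conj3}) the crucial observation is that the walls separating the three chambers involve \emph{only} fiber-supported stable objects, so the ratios $\DT(X)/\DT(X/Y)$, $\DT(A_Y)/\DT_{0}(A_Y)$ and $\phi_{\ast}(\DT(X^{+})/\DT(X^{+}/Y))$ all equal the common ``complement'' of the wall-contribution in each chamber and therefore agree.

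The main obstacle will be verifying that the wall-crossing formula actually applies in this weak, non-numerical setting: boundedness of the moduli of weak-semistable objects in each chamber and along the walls, the existence of appropriate Hall-algebra integration maps, and the Behrend function identities for objects in the perverse hearts (whose objects are honest two-term complexes rather than sheaves). These are precisely the technical outputs of~\cite[Section~8]{Tcurve1} together with the assumed input from~\cite{BG} flagged in the footnote. Granted them, the proof becomes a careful unwinding of wall-contributions and of the variable changes $\phi_{\ast}(\beta, n) = (\phi_{\ast}\beta, n)$ and $i(\beta, n) = (-\beta, n)$.
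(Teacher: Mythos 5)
Your overall strategy---realize the three series as counts of $\oO_X$-framed objects in three hearts inside a common triangulated category and connect the corresponding chambers by wall-crossing for weak stability conditions---is exactly the paper's strategy (the category is the subcategory $\dD_X=\langle \oO_X,\Coh_{\le 1}(X)\rangle_{\tr}$ rather than all of $D^b(\Coh(X))$, with hearts $\aA_X$ and $\pB_{X/Y}$, but that is a technical adjustment). However, there is one genuine gap in your outline. You assert that ``the ideal sheaf $\iI_{C/X}$ is $\oO_X$-stable in one chamber.'' It is not: in the large-volume-limit chamber $\uU_{X,B}$ the rank-one semistable objects are \emph{stable pairs} $(\oO_X\to F)$ with $F$ pure one-dimensional (Proposition~\ref{prop:semi1}), so the wall-crossing chain you describe computes $\PT(X/Y)$ and $\PT(X)/\PT(X/Y)$, not the DT series appearing in (\ref{conj2})--(\ref{conj3}). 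Ideal sheaves never occur as the semistable objects in any chamber of this family. To convert the output into statements about $\DT(X/Y)$ and $\DT(X)/\DT(X/Y)$ one must import the DT/PT correspondence $\DT'(X)=\PT(X)$, $\DT'(X/Y)=\PT(X/Y)$ (Theorem~\ref{mainTcurve}, the main result of the prequel), together with the degree-zero identity $\DT_0(X)=M(-x)^{\chi(X)}=\prod_{n>0}\exp((-1)^{n-1}nN_{n,0}x^n)$. This is not bookkeeping---it is an essential external input without which the left-hand sides of your identities are PT series.

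Two smaller points. First, the wall contributions are not ``dictated by the superpotential of $A_Y$''; they are the exponential factors $\exp((-1)^{n-1}nN_{n,\beta}x^ny^{\beta})^{\epsilon(t)}$ where $N_{n,\beta}$ are Joyce--Song generalized DT invariants counting $\omega$-Gieseker semistable one-dimensional sheaves, and no superpotential enters. Second, the involution $i$ in (\ref{conj2}) does not come only from the sign flip of fiber classes under $\phi_{\ast}$: one also needs the symmetry $N_{n,\beta}=N_{n,-\beta}$ (Lemma~\ref{eq:N}, proved via the dualizing functor $\dR\hH om(-,\oO_X)$ on Gieseker-semistable sheaves) and $\chi(X)=\chi(X^{+})$, so that the product formula for $\DT(X/Y)$ over effective $\beta$ on $X$ matches the product for $\DT(X^{+}/Y)$ over effective classes on $X^{+}$. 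Your treatment of (\ref{conj1}) and (\ref{conj3}) is otherwise in line with the paper: (\ref{conj1}) is the wall-crossing inside $\pU_{X/Y}$ between the chamber where only $\oO_X$ is semistable and the chamber of perverse ideal sheaves, and (\ref{conj3}) follows because the quotient $\DT(\sigma)/\DT_0(\sigma)$ is constant on general points of the connected union of chambers.
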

If we do not assume Conjecture~\ref{conj:BGint}, 
then we at least have the Euler characteristic 
version of Theorem~\ref{conj:main}, 
which will be given in Theorem~\ref{thm:Euler}. 
The Euler characteristic version is
mathematically rigorous at this moment.

Note that (\ref{conj2})
and 
the equality 
$$\frac{\DT(X)}{\DT(X/Y)}=\phi_{\ast}
\frac{\DT(X^{+})}{\DT(X^{+}/Y)},
$$
given in (\ref{conj3})
are proved by J.~Hu and W.~P.~Li~\cite{HL} in the 
case of a flop at a $(-1, -1)$-curve, 
using J.~Li's degeneration formula. 
Also in this case, the equality (\ref{conj1}) 
is just the formula~(\ref{DT(A)=product}). 
The equality~(\ref{conj1}) together with 
the first equality of (\ref{conj3}) yields 
$$\DT(A_Y)=\DT(X)\cdot \phi_{\ast}\DT'(X^{+}/Y), $$
which is interpreted as a global version of 
the formula~(\ref{DT(A)=product}). 

\subsection{Outline of the proof}\label{subsec:out}
Our proof is based on the 
analysis of weak stability conditions on the 
triangulated category, 
$$\dD_X=\langle \oO_X, \Coh_{\le 1}(X) \rangle_{\tr}\subset
D^b(\Coh(X)), $$
i.e. $\dD_X$ is the smallest triangulated subcategory 
of $D^b(\Coh(X))$, which contains $\oO_X$
and $E\in \Coh(X)$ with $\dim \Supp(E)\le 1$. 
The resulting space of weak stability conditions will be
denoted by 
\begin{align*}
\Stab_{\Gamma_{\bullet}}(\dD_X). 
\end{align*}
Let $\phi \colon X^{+} \dashrightarrow X$ be a flop, and 
$\Phi$ a derived equivalence as in (\ref{Deq}). 
Then $\Phi$ restricts to the equivalence 
$\Phi \colon \dD_{X^{+}} \stackrel{\sim}{\to} \dD_X$, hence 
induces the isomorphism, 
\begin{align*}
\Phi_{\ast} \colon 
\Stab_{\Gamma_{\bullet}^{+}}(\dD_{X^{+}}) 
\stackrel{\sim}{\to}
\Stab_{\Gamma_{\bullet}}(\dD_X). 
\end{align*}
The idea is 
as follows: 
we first construct a
certain region, 
\begin{align*}
\uU_{X} \subset \Stab_{\Gamma_{\bullet}}(\dD_X),
\end{align*}
which is interpreted as a neighborhood 
at the large volume limit in terms of 
string theory. For each $\sigma \in \uU_X$, we will 
construct 
the series $\DT(\sigma)$, the 
generating series of 
 DT type invariants counting 
$\sigma$-semistable objects. 
We will show that, 
at a certain limiting point in $\uU_X$, the
series $\DT(\sigma)$ 
coincides with 
the generating series of stable pair invariants introduced 
by Pandharipande-Thomas~\cite{PT}. 
Now we consider the subset, 
\begin{align*}
\Phi_{\ast}\overline{\uU}_{X^{+}} \cup \overline{\uU}_{X}
\subset \Stab_{\Gamma_{\bullet}}(\dD_X). 
\end{align*} 
We will show that the above 
subset is connected, hence we can 
take a path connecting
points in $\Phi_{\ast}\uU_{X^{+}}$
and $\uU_X$, which are 
sufficiently close to each limiting points. 
 Then the wall-crossing formula 
enables us to describe how
$\DT(\sigma)$ varies 
under change of 
$\sigma$ along the path. 
As a result, we can compare 
generating series 
corresponding to limiting points 
in $\Phi_{\ast}\uU_{X^+}$ and $\uU_{X}$,
and obtain a formula relating 
the generating series of stable pair 
invariants on $X^{+}$ and $X$. 
Since generating series of stable 
pair invariants is related to 
that of DT invariants~\cite{Tcurve1}, 
we obtain the desired formula. 
It turns out that 
 the ncDT theory also 
corresponds to a certain point
in $\Stab_{\Gamma_{\bullet}}(\dD_X)$, and 
a similar argument also gives 
a comparison with ncDT invariants.

\subsection{Content of the paper}
In Section~\ref{sec:pre},
we introduce some notions which is
used in this paper. In Section~\ref{sec:weak},
we construct weak stability conditions on 
the triangulated category $\dD_X$. In Section~\ref{sec:semi}, 
we investigate relevant semistable objects. In Section~\ref{sec:proof}, 
we give a proof of Theorem~\ref{conj:main}. 
In Section~\ref{sec:tech}, we prove some technical lemmas.  

\subsection{Acknowledgement}
The author thanks Tom Bridgeland, Dominic Joyce, 
Kentaro Nagao and Richard Thomas for valuable discussions. 
This work is supported by World Premier International 
Research Center Initiative (WPI initiative), MEXT, Japan. 
This work is partially supported by EPSRC
grant EP/F0348461/1. 

\subsection{Notation and convention}
In this paper, all the varieties are defined over 
$\mathbb{C}$. For a triangulated category 
$\dD$, the shift functor is denoted by $[1]$. 
For a set of objects $S\subset \dD$, we denote by 
$\langle S \rangle_{\tr}\subset\dD$ the
smallest triangulated subcategory of $\dD$ which 
contains $S$. Also we denote by $\langle S \rangle_{\ex}
\subset \dD$
the smallest extension closed subcategory of $\dD$
which contains $S$. For an abelian category $\aA$
and a set of objects $S\subset \aA$, the subcategory 
$\langle S \rangle_{\ex}\subset \aA$ is also defined to be 
the smallest extension closed subcategory of $\aA$
which contains $S$. The abelian category of coherent sheaves is denoted 
by $\Coh(X)$. We say $F\in \Coh(X)$ is $d$-dimensional 
if its support is $d$-dimensional. 
The bounded derived category of coherent sheaves 
is denoted by $D^b(\Coh(X))$. For an 
object $E\in D^b(\Coh(X))$ and $i\in \mathbb{Z}$, we 
denote by $\hH^i(E) \in \Coh(X)$ the $i$-th cohomology 
of $E$. 

\section{Preliminaries}\label{sec:pre}
In this section, we introduce some notions which will be 
used in later sections. 
\subsection{Generalities on weak stability 
conditions}
Here we collect definitions and properties 
of weak stability conditions on triangulated 
categories introduced in~\cite[Section~2]{Tcurve1}. 
This is a generalized notion of Bridgeland's 
stability conditions on triangulated categories~\cite{Brs1}. 
Let $\dD$ be a triangulated category, and $K(\dD)$ 
the Grothendieck group of $\dD$. 
We fix a finitely generated free abelian group $\Gamma$, 
and its filtration, 
\begin{align}\label{filt}
0\subsetneq \Gamma_0 \subsetneq \Gamma_1 
\cdots \subsetneq \Gamma_N=\Gamma, 
\end{align}
with each subquotient 
$$\mathbb{H}_i=\Gamma_i/\Gamma_{i-1}, \quad 
(0\le i\le N)$$
a free abelian group. We also fix a group homomorphism, 
 $$\cl \colon K(\dD) \lr \Gamma.$$ 
 We set $\mathbb{H}_i^{\vee}\cneq \Hom_{\mathbb{Z}}(\mathbb{H}_i, \mathbb{C})$
 and fix a norm $\lVert \ast \rVert_i$ 
 on $\mathbb{H}_i \otimes_{\mathbb{Z}}\mathbb{R}$. 
 For an element
 $$Z=\{Z_i\}_{i=0}^{N} \in \prod_{i=0}^{N}\mathbb{H}_i^{\vee}, $$
 and $v\in \Gamma$, we set 
 $$Z(v)\cneq Z_m([v])\in \mathbb{C}, $$
where $0\le m\le N$ satisfies $v\in \Gamma_m \setminus \Gamma_{m-1}$, 
and $[v]$ is the class of $v$ in $\mathbb{H}_m$. 
Here we set $\Gamma_{-1}=\emptyset$. 
Also we define $\lVert v\rVert \cneq \lVert [v] \rVert_{m}$.
Below we write $\cl(E)\in \Gamma$ just as $E\in \Gamma$
when there is no confusion. 
\begin{defi}\emph{
A \textit{weak stability condition} on $\dD$ is a pair 
$\sigma=(Z, \pP)$, 
\begin{align}\label{pair:weak}
Z\in \prod_{i=0}^{N}\mathbb{H}_i^{\vee}, \quad 
\pP(\phi) \subset \dD, \quad (\phi \in \mathbb{R}),
\end{align}
where $\pP(\phi)$ is a full additive subcategory of $\dD$, 
which satisfies the following axiom.}
\begin{itemize}
\item \emph{For any $\phi \in \mathbb{R}$, we have 
$\pP(\phi)[1]=\pP(\phi+1)$.}
\emph{\item For $E_i \in \pP(\phi_i)$ with $\phi_1>\phi_2$, we have 
$\Hom(E_1, E_2)=0$. }
\emph{\item {\bf(Harder-Narasimhan property):}
For any object $E\in \dD$, we have the 
following collection of triangles:}
$$\xymatrix{
0=E_0 \ar[rr]  & &E_1 \ar[dl] \ar[rr] & & E_2 \ar[r]\ar[dl] & \cdots \ar[rr] & & E_n =E \ar[dl]\\
&  F_1 \ar[ul]^{[1]} & & F_2 \ar[ul]^{[1]}& & & F_n \ar[ul]^{[1]}&
}$$
\emph{such that $F_j \in \pP (\phi _j)$ with $\phi _1 > \phi _2 > \cdots >\phi _n$. }
\item \emph{For any non-zero $E\in \pP(\phi)$, we have} 
\begin{align}\label{phase}
Z(E) \in \mathbb{R}_{>0}\exp(i\pi \phi).
\end{align}
\item \emph{{\bf(Support property):}
There is a constant $C>0$ such that for any 
non-zero $E\in \bigcup_{\phi\in \mathbb{R}}\pP(\phi)$, we have}
\begin{align}\label{support}
\lVert E \rVert \le C\lvert Z(E) \rvert.
\end{align}
\item \emph{{\bf(Local finiteness condition):}
There exists $\eta>0$ such that for any $\phi \in \mathbb{R}$,
the quasi-abelian category 
$\pP((\phi-\eta, \phi+\eta))$ is of finite length.} 
\end{itemize} 
\end{defi}
Here for an interval $I\subset \mathbb{R}$, the subcategory 
$\pP(I)\subset \dD$ and the subset $C_{\sigma}(I)\subset \Gamma$
are defined to be 
\begin{align}\notag
\pP(I)&=\langle \pP(\phi) : \phi \in I \rangle_{\ex} \subset \dD, \\
\label{Csigma}
C_{\sigma}(I)&=\imm\{ \cl \colon \pP(I) \to \Gamma\}.
\end{align}
If $I=(a, b)$ with $b-a<1$, then 
$\pP(I)$ is a quasi-abelian category
(cf.~\cite[Definition~4.1]{Brs1},)
and $\pP(I)$ is said to be of finite length 
if $\pP(I)$ is noetherian and artinian with respect to 
strict epimorphisms and strict morphisms. 
See~\cite[Section~4]{Brs1} for more detail. 

Another way of defining weak stability conditions
is using t-structures. 
The readers can refer~\cite{Brs1} for 
bounded t-structures, and their hearts.  
\begin{defi}\label{def:wefu}
\emph{
Let $\aA\subset \dD$ be the heart of a bounded t-structure on 
a triangulated category 
$\dD$. We say $Z \in \prod_{i=0}^{N}\mathbb{H}_i^{\vee}$
is a \textit{weak stability function} on $\aA$ if for any non-zero $E\in \aA$, 
we have} 
\begin{align}\label{def:weak}
Z(E) \in \mathfrak{H}\cneq
\{ r\exp(i\pi \phi) : r>0, \ 0<\phi \le 1\}.
\end{align}
\end{defi}
By (\ref{def:weak}), we can uniquely 
determine the argument,
$$\arg Z(E) \in (0, \pi],$$
for any $0\neq E\in \aA$. 
For an exact sequence $0\to F\to E \to G\to 0$
in $\aA$, 
one of the
following equalities holds.
\begin{align*}
&\arg Z(F) \le \arg Z(E) \le \arg Z(G), \\
&\arg Z(F) \ge \arg Z(E) \ge \arg Z(G).
\end{align*}
\begin{defi}\emph{
Let $Z\in \prod_{i=0}^{N}\mathbb{H}_i^{\vee}$
be a weak stability function on $\aA$. We say 
$0\neq E \in \aA$ is $Z$-\textit{semistable} (resp. \textit{stable}) if 
for any exact sequence $0 \to F \to E \to G\to 0$ we have} 
\begin{align}\label{argg}
\arg Z(F)\le \arg Z(G), \quad (\mbox{\rm{resp}.~}\arg Z(F)<\arg Z(G).)
\end{align}
\end{defi}
The notion of Harder-Narasimhan filtration is defined 
in a similar way to usual stability conditions. 
\begin{defi}\emph{
Let $Z\in \prod_{i=0}^{N}\mathbb{H}_i^{\vee}$
be a weak stability function on $\aA$.
A \textit{Harder-Narasimhan filtration} of an object $E\in \aA$ 
is a filtration 
$$0=E_0 \subset E_1 \subset \cdots \subset E_{k-1}\subset E_k=E, $$
such that each subquotient $F_j=E_j/E_{j-1}$ is 
$Z$-semistable with 
$$\arg Z(F_1)>\arg Z(F_2)>\cdots >\arg Z(F_k).$$
A weak stability function $Z$
is said to have the \textit{Harder-Narasimhan property} if any 
object $E\in \aA$ has a Harder-Narasimhan filtration.} 
\end{defi}
We will use the following proposition. 
(cf.~\cite[Proposition~2.12]{Tcurve1}.)
\begin{prop}\label{suHN}
Let $Z\in \prod_{i=0}^{N}\mathbb{H}_i^{\vee}$
be a weak stability function on $\aA$.
Suppose that the following chain conditions are satisfied. 

(a) There are no infinite sequences of subobjects in $\aA$, 
$$\cdots \subset E_{j+1} \subset E_j \subset \cdots \subset E_2 \subset E_1$$
with $\arg Z(E_{j+1})>\arg Z(E_j/E_{j+1})$ for all $j$. 

(b) There are no infinite sequences of quotients in $\aA$, 
$$E_1 \twoheadrightarrow E_2 \twoheadrightarrow \cdots \twoheadrightarrow 
E_j \stackrel{\pi_j}{\twoheadrightarrow}
 E_{j+1} \twoheadrightarrow \cdots$$
with $\arg Z(\Ker \pi_j)>\arg Z(E_{j+1})$ for all $j$. 

Then $Z$ has the Harder-Narasimhan property. 
\end{prop}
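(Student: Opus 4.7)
The approach is to adapt the standard argument of Bridgeland (\cite[Proposition~2.4]{Brs1}) to the present setting of weak stability functions, the two chain conditions (a) and (b) having been formulated precisely so as to play the roles of the Noetherian/Artinian hypotheses there. I would break the proof into two steps: (i) for any $0\neq E\in\aA$ construct a \emph{maximal destabilizing subobject} (MDS) $F\subseteq E$, namely a $Z$-semistable subobject such that $F=E$ or $\arg Z(F)>\arg Z(E/F)$, and moreover $\arg Z(G)\le \arg Z(F)$ for every subobject $G\subseteq E$; (ii) iteratively extract MDSs from successive quotients to produce the HN filtration.

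\textbf{Step 1 (MDS).} If $E$ is $Z$-semistable, take $F=E$. Otherwise, there exists a subobject $E_1\subsetneq E$ with $\arg Z(E_1)>\arg Z(E/E_1)$. I would choose $E_1$ to be \emph{maximal} among such destabilizing subobjects; the key observation here is that if $F_1,F_2\subseteq E$ both satisfy $\arg Z(F_i)>\arg Z(E/F_i)$, then by applying the see-saw inequality (the dichotomy following Definition~\ref{def:wefu}) to the short exact sequences $0\to F_1\cap F_2\to F_1\oplus F_2\to F_1+F_2\to 0$ and $0\to F_1+F_2\to E\to E/(F_1+F_2)\to 0$, one deduces that $F_1+F_2$ is again destabilizing. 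This closure under sums, together with chain condition (a) applied to ascending chains of destabilizing subobjects (dualized appropriately), lets me pick $E_1$ maximal. If $E_1$ is semistable, it is the MDS; if not, repeat with $E_1$ in place of $E$, obtaining a strictly descending chain $E\supsetneq E_1\supsetneq E_2\supsetneq\cdots$ where each $E_{j+1}$ is a maximal destabilizer of $E_j$, in particular $\arg Z(E_{j+1})>\arg Z(E_j/E_{j+1})$. By chain condition (a) this terminates, producing a semistable $F\subseteq E$, which by the maximality choices is the MDS.

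\textbf{Step 2 (Iteration).} Given $E$, set $F_1\subseteq E$ equal to its MDS, put $E^{(1)}=E/F_1$, and define $F_2\supset F_1$ so that $F_2/F_1\subseteq E^{(1)}$ is the MDS of $E^{(1)}$; continue inductively. The MDS property of $F_j$ in $E^{(j-1)}$ and in $E$ ensures that the quotients $G_j\cneq F_j/F_{j-1}$ are semistable with strictly decreasing phases $\arg Z(G_1)>\arg Z(G_2)>\cdots$. The tower of quotients $E\twoheadrightarrow E/F_1\twoheadrightarrow E/F_2\twoheadrightarrow\cdots$ has kernels $G_j$ satisfying $\arg Z(G_j)>\arg Z(G_{j+1})\ge \arg Z(E/F_{j+1})$ (the last inequality coming from semistability of the MDS $F_{j+1}/F_j$ of $E/F_j$ together with the see-saw property). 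Hence the hypothesis of chain condition (b) applies, forcing the sequence to terminate in finitely many steps, at which point $F_k=E$ and the filtration $0=F_0\subset F_1\subset\cdots\subset F_k=E$ is the desired HN filtration.

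\textbf{Main obstacle.} The delicate point, and the main deviation from the strict-stability argument, is the existence of a \emph{maximal} destabilizing subobject in Step~1. In Bridgeland's original setting this uses Noetherianity of $\aA$; here one must instead exploit chain condition (a) and the see-saw inequality to verify closure of the class of destabilizing subobjects under finite sums, and then use (a) to rule out infinite ascending chains of such. The weak (rather than strict) inequalities permitted in Definition~\ref{def:wefu}, together with the possibility that $Z$ vanishes on some objects, require one to argue carefully with $\arg Z$ case by case, which is where most of the technical work lies.
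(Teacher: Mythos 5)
First, note that the paper offers no proof of this proposition: it is quoted from \cite[Proposition~2.12]{Tcurve1}, whose argument adapts \cite[Proposition~2.4]{Brs1} via maximal destabilizing \emph{quotients}. Your dual outline (maximal destabilizing subobject, then iterate) could in principle be made to work, but as written Step~1 rests on a claim that is false: destabilizing subobjects are \emph{not} closed under sums, already for an ordinary additive stability function. Take $\aA$ to be the category of $\{1,2,3,4\}$-graded finite-dimensional vector spaces, with $Z$ sending the four simple objects to $-1+i$, $1+i$, $1+i$, $i$ respectively (all in $\mathfrak{H}$). Let $E$ have dimension vector $(1,1,1,1)$, and let $F_1$, $F_2$ be the subobjects with vectors $(1,1,0,0)$ and $(1,0,1,0)$, so $F_1\cap F_2$ has vector $(1,0,0,0)$. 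Then $Z(F_1)=Z(F_2)=2i$ and $Z(E/F_1)=Z(E/F_2)=1+2i$, so $\arg Z(F_i)=\pi/2>\arg Z(E/F_i)$ and both $F_i$ destabilize $E$; but $Z(F_1+F_2)=1+3i$ while $Z(E/(F_1+F_2))=i$, so $\arg Z(F_1+F_2)<\pi/2=\arg Z(E/(F_1+F_2))$ and $F_1+F_2$ is not destabilizing. The see-saw dichotomy alone cannot give what you want. What is true, and what the standard construction uses, is that subobjects whose phase is \emph{maximal among all subobjects of $E$} are closed under sums; ``destabilizing'' is strictly weaker than ``of maximal phase,'' as the example shows.

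There are further unsupported steps at the same spot. To extract a maximal element you invoke condition~(a) ``dualized appropriately'' against ascending chains of destabilizing subobjects; but an ascending chain $E_1\subset E_2\subset\cdots\subset E$ is a descending chain of quotients $E/E_1\twoheadrightarrow E/E_2\twoheadrightarrow\cdots$, so the relevant hypothesis is (b), and (b) requires $\arg Z(E_{j+1}/E_j)>\arg Z(E/E_{j+1})$, which does not follow from each $E_j$ being destabilizing. Likewise, at the end of Step~1 you assert that the semistable $F$ produced by the descending chain dominates the phase of \emph{every} subobject of $E$ ``by the maximality choices''; this needs an argument, and it is precisely here (and in the strictness of $\arg Z(G_1)>\arg Z(G_2)>\cdots$ in Step~2) that the weak inequalities of Definition~\ref{def:wefu} make the bookkeeping delicate. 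I would rework Step~1 along the lines of \cite[Proposition~2.4]{Brs1}: either construct the maximal destabilizing quotient by passing to successive quotients, using (b) for termination and (a) to choose a maximal subobject at each stage, or, if you keep the subobject formulation, first prove that $\sup\{\arg Z(G):0\neq G\subseteq E\}$ is attained and work with subobjects of maximal phase rather than with arbitrary destabilizers.
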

We have the following proposition. 
(cf.~\cite[Proposition~5.3]{Brs1}, \cite[Proposition~2.13]{Tcurve1}.)
\begin{prop}\label{prop:corr}
Giving a pair 
$(Z, \pP)$
as in (\ref{pair:weak}) 
satisfying (\ref{phase})
 is equivalent to 
giving a bounded t-structure $\aA$ on 
$\dD$ and a weak 
stability function on its heart with the Harder-Narasimhan 
property. 
\end{prop}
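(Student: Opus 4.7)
The plan is to mirror Bridgeland's proof of the corresponding equivalence for ordinary stability conditions \cite[Proposition~5.3]{Brs1}, adapted to the weak-stability setting where $Z$ is a collection $\{Z_i\}$ on the successive quotients of a filtration of $\Gamma$ rather than a single linear functional on $K(\dD)$. Both directions must be established: given the data $(Z, \pP)$ I construct a bounded t-structure whose heart carries $Z$ as a weak stability function with the HN property, and conversely, given such a heart I build the slicing $\pP$.

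For the forward direction, I set $\aA \cneq \pP((0,1])$. The standard argument based on the HN axiom of the slicing shows that $\aA$ is the heart of a bounded t-structure on $\dD$: any $E \in \dD$ has HN factors with phases in some finite interval, and splitting these into the positive-phase and non-positive-phase parts produces the required truncation triangles. The heart $\aA$ then inherits an abelian structure from the quasi-abelian category $\pP((0,1])$ via tilting. To check that $Z$ restricts to a weak stability function on $\aA$, I take a nonzero $E \in \aA$ with $E \in \Gamma_m \setminus \Gamma_{m-1}$. Its HN factors $F_j \in \pP(\phi_j)$ satisfy $Z(F_j) \in \mathbb{R}_{>0}\exp(i\pi\phi_j)$ with $\phi_j \in (0,1]$, and exactly those factors with class living in $\Gamma_m \setminus \Gamma_{m-1}$ contribute to $Z_m([E]) = Z(E)$; each such contribution lies in $\mathfrak{H}$, and since $\mathfrak{H}$ is closed under addition (the only potential failure would require every summand to lie on the negative real axis, which still yields a negative real sum), we conclude $Z(E) \in \mathfrak{H}$. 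The HN property on $\aA$ is then a direct consequence of the HN axiom for the slicing.

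For the backward direction, given the heart $\aA$ and a weak stability function $Z$ with the HN property, I define
$$\pP(\phi) \cneq \{E \in \aA : E \text{ is } Z\text{-semistable with } \arg Z(E) = \pi\phi\} \cup \{0\}, \quad \phi \in (0,1],$$
and extend by $\pP(\phi+n) \cneq \pP(\phi)[n]$ for $n \in \mathbb{Z}$. The shift axiom holds tautologically. The Hom-vanishing for $\phi_1 > \phi_2$ follows by the usual argument: a nonzero morphism between semistable objects in $\aA$ of strictly decreasing phase contradicts the semistability of its image, and across different shift degrees one reduces to this case using the bounded t-structure together with the observation that $\pP(\phi) \subset \aA[\lceil \phi-1 \rceil]$. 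The HN property on $\dD$ is obtained on two levels: first decompose $E \in \dD$ into its t-structure cohomology objects $\hH^i(E) \in \aA$, then apply the HN filtration of each $\hH^i(E)$ supplied by the hypothesis, and splice the truncation triangles into a single HN tower. The phase compatibility (\ref{phase}) is built into the definition of $\pP(\phi)$, and both constructions are evidently inverse to each other.

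The main obstacle will be the careful handling of the filtration structure on $\Gamma$: unlike in Bridgeland's setting, for an exact sequence $0 \to F \to E \to G \to 0$ in $\aA$ the identity $Z(E) = Z(F) + Z(G)$ can fail, because the three classes may lie in distinct strata $\Gamma_m \setminus \Gamma_{m-1}$, and one only has $Z_m([E]) = Z_m([F]) + Z_m([G])$ for the index $m$ associated to $E$. The see-saw inequalities of arguments which underpin both the notion of $Z$-semistability and the comparison of HN factors must therefore be tracked stratum by stratum, and it is exactly here that the phase condition forces $Z(F_j)$ into $\mathfrak{H}$ before invoking additivity. Once this bookkeeping is carried out, every remaining step is a direct adaptation of \cite[Proposition~5.3]{Brs1}, yielding the desired bijection.
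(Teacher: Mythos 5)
Your overall strategy is exactly the one the paper intends: the paper itself gives no proof, contenting itself with recording the two mutually inverse assignments $(Z,\pP)\mapsto (Z,\pP((0,1]))$ and $(Z,\aA)\mapsto (Z,\{\pP(\phi)\})$ and citing \cite[Proposition~5.3]{Brs1} and \cite[Proposition~2.13]{Tcurve1}, and you correctly identify the single point where the weak setting departs from Bridgeland's, namely that $Z$ is not additive on $\Gamma$ but only on each subquotient $\mathbb{H}_m$. The backward direction as you describe it is fine, since the see-saw property for exact sequences recorded after Definition~\ref{def:wefu} is what the classical argument needs.

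However, at the critical point of the forward direction your key claim is asserted rather than proved. You state that for nonzero $E\in\pP((0,1])$ with $\cl(E)\in\Gamma_m\setminus\Gamma_{m-1}$, ``exactly those factors with class living in $\Gamma_m\setminus\Gamma_{m-1}$ contribute to $Z_m([E])$.'' Factors in strata $<m$ indeed contribute nothing, but you have not excluded HN factors $F_j$ with $\cl(F_j)\notin\Gamma_m$: their classes sum to an element of $\Gamma_m$ (since $\cl(E)$ and the lower-stratum classes lie there), yet that element can have nonzero image in $\mathbb{H}_m$, in which case your identity $Z(E)=\sum_{m_j=m}Z(F_j)$ is simply false. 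The missing argument is short but essential: let $M=\max_j m_j$ over the strata of the HN factors. If $M>m$, then $\sum_{j:m_j=M}\cl(F_j)=\cl(E)-\sum_{j:m_j<M}\cl(F_j)\in\Gamma_{M-1}$, so $\sum_{j:m_j=M}Z(F_j)=Z_M(0)=0$; but each $Z(F_j)$ lies in $\mathfrak{H}$ because $F_j$ is a nonzero semistable object of phase in $(0,1]$, and $\mathfrak{H}$ is closed under addition with $0\notin\mathfrak{H}$ --- a contradiction. Hence $M\le m$, every $\cl(F_j)$ lies in $\Gamma_m$, the set $\{j:m_j=m\}$ is nonempty, and only then does your additivity argument yield $Z(E)\in\mathfrak{H}$. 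The same sublemma is invoked implicitly each time you compare $\arg Z$ of an object with that of its sub- or quotient objects (in checking that objects of $\pP(\phi)$ are $Z$-semistable in $\aA$ and in transporting HN filtrations), so you should isolate it as a standalone statement; with it in place the rest of your adaptation of Bridgeland's proof goes through.
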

The correspondence in the above proposition is 
given as follows. Given a pair $(Z, \pP)$
satisfying (\ref{phase}), we set 
$\aA$ to be 
$$\aA=\pP((0, 1]) \subset \dD.$$
Conversely given the heart of a bounded
t-structure $\aA\subset \dD$ and a weak 
stability function $Z$ on $\aA$, we set 
$\pP(\phi)$ to be
the following full additive subcategory of $\aA$, 
$$\pP(\phi)=\left\{ E\in \aA : \begin{array}{l}
 E\mbox{ is }Z\mbox{-semistable with }\\
 Z(E) \in \mathbb{R}_{>0}\exp(i\pi \phi)
 \end{array}\right\}.$$
 Below we write an element of $\Stab_{\Gamma_{\bullet}}(\dD)$
 either as $(Z, \pP)$ or $(Z, \aA)$, where 
 $(Z, \pP)$ is a pair (\ref{pair:weak}) and $(Z, \aA)$ is 
 given as in Definition~\ref{def:wefu}. 
 Let $\Stab_{\Gamma_{\bullet}}(\dD)$ be the 
 set of weak stability conditions on $\dD$. 
 The following theorem is given in~\cite[Theorem~2.15]{Tcurve1}, 
 as an analogue of~\cite[Theorem~7.1]{Brs1}. 
 \begin{thm}\label{thm:stab}
 \emph{\bf{\cite[Theorem~2.15]{Tcurve1}}}
 The map
$$\Pi\colon \Stab_{\Gamma_{\bullet}}(\dD)
\ni(Z, \pP) \longmapsto Z 
\in \prod_{i=0}^{N}\mathbb{H}_i^{\vee}, $$
is a local homeomorphism. In particular 
each connected component of $\Stab_{\Gamma_{\bullet}}(\dD)$ 
is a complex manifold. 
\end{thm}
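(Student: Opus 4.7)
The plan is to follow the strategy of Bridgeland's proof of \cite[Theorem~7.1]{Brs1}, adapted to the weak stability setting by replacing the single central charge $\mathbb{H}^{\vee}$ with the product $\prod_{i=0}^{N}\mathbb{H}_i^{\vee}$ and using the support property (\ref{support}) that takes the filtration $\Gamma_{\bullet}$ into account. First I would equip $\Stab_{\Gamma_{\bullet}}(\dD)$ with the generalized metric
\[
d(\sigma_1, \sigma_2) \cneq \sup_{0\neq E \in \dD}\bigl\{|\phi^{+}_{\sigma_1}(E)-\phi^{+}_{\sigma_2}(E)|,\ |\phi^{-}_{\sigma_1}(E)-\phi^{-}_{\sigma_2}(E)|,\ \lVert Z_1-Z_2 \rVert\bigr\},
\]
where $\phi^{\pm}_{\sigma}(E)$ are the maximal and minimal phases appearing in the Harder--Narasimhan filtration of $E$ with respect to $\sigma$. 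It is enough to show that for every $\sigma_0 = (Z_0, \pP_0) \in \Stab_{\Gamma_{\bullet}}(\dD)$ there exists $\varepsilon>0$ such that $\Pi$ maps the open ball $B_\varepsilon(\sigma_0)$ homeomorphically onto an open neighbourhood of $Z_0$ in $\prod_{i=0}^{N}\mathbb{H}_i^{\vee}$.

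For the local injectivity step, I would argue that if $\sigma_1, \sigma_2 \in B_\varepsilon(\sigma_0)$ have the same central charge $Z$, then the HN filtrations force the slicings to coincide: the support property (\ref{support}) gives a uniform bound $\lVert E \rVert \le C|Z(E)|$ on all $\sigma_0$-semistable objects, and this rules out a discrepancy between the phases of any object $E$ with $Z(E) \neq 0$. Combined with the local finiteness of $\pP_0((\phi-\eta, \phi+\eta))$, one concludes $\pP_1=\pP_2$. For the surjectivity step, given $W \in \prod_i \mathbb{H}_i^{\vee}$ close to $Z_0$, I would construct a new slicing $\pP_W$ as follows: for any object $E$ that is $\sigma_0$-semistable of phase $\phi$, the support property guarantees that $W(E)$ lies in a narrow cone around $Z_0(E)$, hence has a well-defined perturbed phase. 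Using Proposition~\ref{suHN} applied to the quasi-abelian categories $\pP_0((\phi-\eta, \phi+\eta))$, I would produce HN filtrations with respect to $W$ inside each such category, then glue them into a global slicing $\pP_W$; verifying $(W, \pP_W)\in \Stab_{\Gamma_{\bullet}}(\dD)$ uses the filtration-respecting form of the support property to ensure the new support constant is controlled.

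The main obstacle is the deformation (surjectivity) step, precisely because the central charge now takes values in the \emph{product} $\prod_i \mathbb{H}_i^{\vee}$ governed by the filtration $\Gamma_\bullet$. A perturbation in $\mathbb{H}_i^{\vee}$ affects only the phases of objects whose class lies in $\Gamma_i\setminus \Gamma_{i-1}$, while perturbations in lower $\mathbb{H}_j^{\vee}$ ($j<i$) can in principle create new destabilizing subobjects among such objects; one must therefore do the deformation graded-piece by graded-piece, starting from $\mathbb{H}_0^{\vee}$ and proceeding upward, at each stage using the support property to bound how much the slicing can jump. Once the deformed slicing is built and shown to satisfy the axioms, the proof is completed by checking that $\Pi$ and its local inverse are continuous in the metric $d$, and the complex manifold structure is inherited from $\prod_{i=0}^{N}\mathbb{H}_i^{\vee}$, each factor being a finite-dimensional complex vector space.
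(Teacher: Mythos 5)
Your proposal is essentially the intended argument: the paper does not prove this theorem here but quotes it from \cite[Theorem~2.15]{Tcurve1}, where it is established exactly as you describe, by running Bridgeland's proof of \cite[Theorem~7.1]{Brs1} (generalized metric, local injectivity from coincidence of HN filtrations, deformation of the slicing inside the thin quasi-abelian categories $\pP((\phi-\eta,\phi+\eta))$) with the graded support property controlling the phase perturbation of semistable objects. The only cosmetic difference is that the cited proof deforms all the $Z_i$ at once rather than graded-piece by graded-piece, which works because $Z(E)$ of a semistable $E$ depends only on the leading graded piece of $\cl(E)$ and the support property bounds that piece uniformly.
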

We will need the following two lemmas. 
The first one is proved in~\cite[Lemma~7.1]{Tcurve1}. 
\begin{lem}\label{lem:scont}
\emph{\bf{\cite[Lemma~7.1]{Tcurve1}}}
Let $\aA$ be the heart of a bounded t-structure on 
$\dD$, and $(\tT, \fF)$ a torsion pair 
(cf.~Definition~\ref{def:torsion},)
on $\aA$. 
Let $\bB=\langle \fF[1], \tT \rangle_{\ex}$ the 
associated tilting. (cf.~(\ref{dag}).)
Let
$$[0, 1) \ni t \longmapsto Z_t \in 
\prod_{i=0}^{N}\mathbb{H}_i^{\vee}, $$
be a continuous map such that
$\sigma_t=(Z_t, \aA)$ for $0<t<1$ and 
$\sigma_0=(Z_0, \bB)$ determine points in 
$\Stab_{\Gamma_{\bullet}}(\dD)$.
Then we have $\lim_{t\to 0}\sigma_t=\sigma_0$. 
\end{lem}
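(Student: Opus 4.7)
The plan is to combine the local homeomorphism property of $\Pi$ from Theorem~\ref{thm:stab} with the explicit description, underlying its proof, of the hearts of nearby stability conditions as tilts of the heart of $\sigma_0$. The main task will reduce to showing that the heart prescribed by the Bridgeland-style deformation recipe for the perturbed central charge $Z_t$ is precisely the un-tilt $\aA$ of $\bB$ at $\fF$, so that the unique lift with central charge $Z_t$ in a neighborhood of $\sigma_0$ coincides with $\sigma_t$ itself.

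First I would fix, by Theorem~\ref{thm:stab}, open neighborhoods $U\subset\Stab_{\Gamma_{\bullet}}(\dD)$ of $\sigma_0$ and $V\subset\prod_{i=0}^{N}\mathbb{H}_i^{\vee}$ of $Z_0$ such that $\Pi|_U\colon U\to V$ is a homeomorphism. Continuity of $t\mapsto Z_t$ then furnishes $\epsilon>0$ with $Z_t\in V$ for $t\in[0,\epsilon)$. Setting $\tilde{\sigma}_t\cneq(\Pi|_U)^{-1}(Z_t)\in U$, we have $\tilde{\sigma}_t\to\sigma_0$ in $\Stab_{\Gamma_{\bullet}}(\dD)$ by construction, so it suffices to prove $\sigma_t=\tilde{\sigma}_t$ for all sufficiently small $t>0$. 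Since both stability conditions share the central charge $Z_t$, by Proposition~\ref{prop:corr} this amounts to matching their hearts.

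Next I would analyze the relative position of $\aA$ and $\bB$ inside the slicing $\pP_0$ of $\sigma_0$. The hypothesis that $(Z_0,\bB)$ is a weak stability condition with $\bB=\langle\fF[1],\tT\rangle_{\ex}$ forces $Z_0(\fF)\subset -\mathfrak{H}$; combined with $Z_t(\fF)\subset\mathfrak{H}$ for $t>0$ and the continuous non-vanishing of $Z_t(F)$ as $t\to 0^+$, this pins down $Z_0(\fF)\subset\mathbb{R}_{>0}$, so that $\fF[1]\subset\pP_0(1)$. Under the perturbation $Z_0\rightsquigarrow Z_t$ the slicing phase of every object of $\fF[1]$ thus moves from the integer $1$ into the interval $(1,2)$, while the phases of $\tT\subset\pP_0((0,1])$ stay strictly inside $(0,1]$. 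The deformation construction underlying Theorem~\ref{thm:stab} (in the spirit of~\cite[Theorem~7.1]{Brs1}) then prescribes the heart of $\tilde{\sigma}_t$ as the tilt of $\bB$ obtained by replacing $\fF[1]$ with its $[-1]$-shift $\fF$, namely $\langle\tT,\fF\rangle_{\ex}=\aA$. Combined with the first paragraph this yields $\sigma_t=\tilde{\sigma}_t$ and hence $\sigma_t\to\sigma_0$.

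The hardest step will be to confirm that no further boundary-crossing occurs within $V$, i.e.\ that the un-tilt at $\fF[1]$ alone exhausts the deformation of the slicing. This is where the support property~(\ref{support}) and local finiteness will enter: they bound the masses of candidate semistable classes whose $\pP_0$-phases could conceivably approach the integer $1$ as $Z$ varies in a small ball around $Z_0$, and by shrinking $V$ (hence $\epsilon$) one confines all such classes to $\fF[1]$. Once this containment is established, the identification of the heart of $\tilde{\sigma}_t$ as $\aA$, and therefore the convergence $\sigma_t\to\sigma_0$, follow formally from Theorem~\ref{thm:stab}.
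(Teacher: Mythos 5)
The paper itself offers no argument for this lemma: it is imported verbatim from \cite[Lemma~7.1]{Tcurve1}, so there is no in-paper proof to measure you against. Your overall strategy --- lift $Z_t$ through the local homeomorphism $\Pi$ of Theorem~\ref{thm:stab} to get $\tilde{\sigma}_t\to\sigma_0$, then reduce to showing that the heart of $\tilde{\sigma}_t$ is $\aA$ (which, by Proposition~\ref{prop:corr}, does determine the weak stability condition together with $Z_t$) --- is coherent, and your observation that $Z_0(\fF)\subset\mathbb{R}_{>0}$ (hence $Z_0(\fF[1])\subset\mathbb{R}_{<0}$) is correct and is indeed the mechanism that makes the statement true.

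However, the proposal stops exactly where the content of the lemma begins. First, the assertion $\fF[1]\subset\pP_0(1)$ is unjustified: $Z_0(F[1])\in\mathbb{R}_{<0}$ does not make $F[1]$ a $\sigma_0$-semistable object, and for \emph{weak} stability conditions this is more delicate than usual because $Z$ is evaluated on the leading term of $\cl(E)$ with respect to the filtration $\Gamma_{\bullet}$ and is therefore not additive on short exact sequences, so the standard ``the phase of $E$ is sandwiched between the phases of its sub and quotient'' arguments do not transfer verbatim. Second, and more seriously, the identification of the heart $\qQ_t((0,1])$ of the lift with $\aA$ is asserted by appeal to ``the deformation construction'' and then explicitly deferred in your last paragraph. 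What actually has to be proved is two-sided: (a) every object of $\tT$ keeps all of its $\tilde{\sigma}_t$-HN factors inside $(0,1]$ --- note that $\tT\cap\pP_0(1)$ is typically nonempty, so the boundary-crossing candidates are \emph{not} confined to $\fF[1]$ as you claim; they are harmless only because $Z_t(\tT)\subset\mathfrak{H}$, which you never invoke --- and (b) every object of $\fF$ genuinely enters the heart, i.e.\ has all $\tilde{\sigma}_t$-HN phases in $(0,1]$ rather than merely in $(-\epsilon,1]$. Saying that the support property and local finiteness ``will enter'' here is a plan, not a proof; as written the argument establishes convergence of the lifted path but not the equality $\sigma_t=\tilde{\sigma}_t$, which is the whole point.
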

The second one is a compatibility of 
the weak stability conditions via 
equivalences of triangulated categories. 
The proof is straightforward, 
and we omit the proof. 
\begin{lem}\label{lem:group}
 Let $\dD'$ be another triangulated category together with 
similar additional data $\cl' \colon K(\dD') \to \Gamma'$ 
and a filtration $\Gamma_{\bullet}'$ as in (\ref{filt}). 
Suppose that $\Phi\colon \dD \to \dD'$ gives an equivalence 
of triangulated categories such that there is a
filtration preserving isomorphism $\Phi_{\Gamma}
\colon \Gamma_{\bullet} \to \Gamma'_{\bullet}$ 
which fits into the following commutative diagram,
\begin{align}\label{commutes}
\xymatrix{
K(\dD) \ar[r]^{\Phi}\ar[d]_{\cl} & K(\dD')\ar[d]^{\cl'} \\
\Gamma \ar[r]^{\Phi_{\Gamma}} & \Gamma'.
}
\end{align}
Then there is an isomorphism $\Phi_{\ast}\colon 
\Stab_{\Gamma_{\bullet}}(\dD) \to \Stab_{\Gamma_{\bullet}'}(\dD')$
such that the following diagram commutes, 
$$\xymatrix{\Stab_{\Gamma_{\bullet}}(\dD) \ar[r]^{\Phi_{\ast}}\ar[d]_{\Pi}
& \Stab_{\Gamma_{\bullet}'}(\dD')\ar[d]^{\Pi'} \\
\prod_{i=0}^{N} \mathbb{H}_i^{\vee} 
\ar[r]^{(\gr\Phi_{\Gamma}^{-1})^{\vee}} 
& \prod_{i=0}^{N}\mathbb{H}_i^{'\vee}.}$$
\end{lem}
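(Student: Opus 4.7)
The natural construction is to set $\Phi_\ast(Z,\pP) \cneq (Z',\pP')$ with $\pP'(\phi) \cneq \Phi(\pP(\phi))$ for every $\phi \in \mathbb{R}$, and $Z' \cneq (\gr\Phi_\Gamma^{-1})^\vee(Z)$, so that commutativity of the diagram with $\Pi$ and $\Pi'$ holds tautologically. The whole content of the lemma is then to check that $(Z',\pP')$ is again a weak stability condition, and that $\Phi_\ast$ is a homeomorphism.

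Most axioms transfer formally because $\Phi$ is a triangulated equivalence. The shift compatibility $\pP'(\phi+1)=\pP'(\phi)[1]$ follows from $\Phi$ commuting with $[1]$; the vanishing of $\Hom$ between distinct slices is immediate from the natural isomorphism $\Hom_{\dD'}(\Phi E_1, \Phi E_2)\cong \Hom_{\dD}(E_1,E_2)$; and the Harder--Narasimhan filtration of $E'\in \dD'$ is obtained by applying $\Phi$ to the HN filtration of $\Phi^{-1}(E')\in \dD$. For the phase axiom~(\ref{phase}), if $E'=\Phi(E)$ with $E\in \pP(\phi)$, the commutativity of (\ref{commutes}) gives $\cl'(E')=\Phi_\Gamma(\cl(E))$, so $Z'(E')=Z(\Phi_\Gamma^{-1}\cl'(E'))=Z(E)\in \mathbb{R}_{>0}\exp(i\pi\phi)$, as required.

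The one axiom with slight subtlety is the support condition, because the norms on $\mathbb{H}_i\otimes_\mathbb{Z}\mathbb{R}$ and on $\mathbb{H}'_i\otimes_\mathbb{Z}\mathbb{R}$ are chosen independently. However, $(\gr\Phi_\Gamma)_i$ is an isomorphism of finite-dimensional real vector spaces after tensoring with $\mathbb{R}$, so there is a uniform constant $C'>0$ with $\lVert v'\rVert'_i \le C'\lVert (\gr\Phi_\Gamma^{-1})_i(v')\rVert_i$ for all $v'\in \mathbb{H}'_i$. Combined with~(\ref{support}) for $\sigma$, this gives the support property for $\sigma'$ with constant $CC'$. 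Local finiteness transfers immediately since $\Phi$ restricts to an equivalence of quasi-abelian categories $\pP((\phi-\eta,\phi+\eta))\simeq \pP'((\phi-\eta,\phi+\eta))$.

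Finally, the construction $\Phi_\ast$ has two-sided inverse $(\Phi^{-1})_\ast$, so it is a bijection. Continuity, and hence the fact that it is a homeomorphism, then follows from the commutativity of the displayed square together with Theorem~\ref{thm:stab}: $\Pi$ and $\Pi'$ are local homeomorphisms, and the bottom horizontal map $(\gr\Phi_\Gamma^{-1})^\vee$ is manifestly a homeomorphism of linear spaces. I expect no genuine obstacle; the only step that requires any care at all is reconciling the two independent choices of norms in the support property.
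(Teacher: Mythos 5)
The paper itself offers no proof of this lemma --- it states that ``the proof is straightforward, and we omit the proof'' --- so your write-up is being compared against the intended-but-omitted argument, and your construction ($\pP'(\phi)=\Phi(\pP(\phi))$, $Z'=(\gr\Phi_\Gamma^{-1})^{\vee}(Z)$) is exactly that argument. Your handling of the axioms is correct, and you rightly identify the support property as the one point needing care: since each $(\gr_i\Phi_\Gamma)\otimes\mathbb{R}$ is a linear isomorphism of finite-dimensional spaces, the two independently chosen norms are comparable by a uniform constant, which is all that is needed. The only step I would tighten is the final continuity claim: a commuting square in which the two vertical maps are local homeomorphisms and the bottom map is a homeomorphism does not by itself force the top map to be continuous, because $\Pi'$ need only be injective on a small neighbourhood of $\Phi_\ast\sigma$ and one must still rule out that $\Phi_\ast\tau$ lands outside that neighbourhood while having the correct image. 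The cleanest repair is to observe that your computation $Z'(\Phi E)=Z(E)$ together with $\pP'(\phi)=\Phi(\pP(\phi))$ shows $\Phi_\ast$ preserves both the phases of semistable factors and the values of the central charge on the nose, hence is an isometry for the generalized metric defining the topology on the spaces of weak stability conditions; this immediately gives that $\Phi_\ast$ and its inverse $(\Phi^{-1})_\ast$ are continuous.
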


\subsection{Terminology from birational geometry}
In what follows, we assume that $X$ is a smooth 
projective Calabi-Yau 3-fold over $\mathbb{C}$, 
i.e. 
\begin{align*}
\bigwedge^3 T_{X}^{\vee} \cong \oO_X, \quad 
H^1(X, \oO_X)=0. 
\end{align*}
Here we introduce standard terminology in 
birational geometry, for example used in~\cite[Definition 1.1]{Ka2}. 

Let 
$S$ be a projective variety with a morphism 
$f\colon X\to S$.
Two divisors $D_1$, $D_2$ on $X$ are called \textit{numerically equivalent} 
over $S$ if and only if 
$D_1 \cdot C=D_2 \cdot C$ for any curve $C\subset X$
with $f_{\ast}[C]=0$. 
Similarly,  
one-cycles $C_1$, $C_2$ on $X$ contracted by $f$
 are \textit{numerically equivalent} 
if and only if $D\cdot C_1=D\cdot C_2$ for every divisor $D$ on $X$. 
\begin{defi}\emph{
We define abelian groups $N^1 (X/S)$, $N_1(X/S)$ to be
\begin{align*}
N^1 (X/S) & \cneq \{ \emph{\rm{Divisors on} }X \} / (
\emph{\rm{numerical equivalence over} }S ), \\
 N_1 (X/S) & \cneq \{ \emph{\rm{One-cycles on} }X
  \emph{ \rm{contracted by} }f \} / (
\emph{\rm{numerical equivalence}}).
\end{align*}}
\end{defi}
By the definition, there is the perfect pairing, 
$$N^1 (X/S)_{\mathbb{R}} \times N_1 (X/S)_{\mathbb{R}}
 \ni (D, C) \longmapsto D\cdot C \in \mathbb{\mathbb{R}}.$$
 \begin{defi}\label{amp}\emph{
We define
the \textit{ample cone} $A(X/S)$, the
\textit{complexified ample cone} $A(X/S)_{\mathbb{C}}$,
and the \textit{semigroup of effective 
one-cycles} $\NE(X/S)$ 
 to be}
\begin{align*}
 A(X/S) & \cneq \{ \emph{Numerical classes of }f
\emph{-ample }\mathbb{R} 
\emph{-divisors } \} \subset N^1 (X/S)_{\mathbb{R}}, \\
A(X/S)_{\mathbb{C}} & \cneq \{ B +i \omega \in N^1 (X/S)_{\mathbb{C}} 
: \omega \in A(X/S) \}, \\
\NE(X/S)&\cneq \{ \emph{Effective one-cycles 
contracted by }f\} \subset N_1(X/S). 
 \end{align*}
\end{defi}
For $\beta, \beta'\in N_1(X/S)$, we write 
$\beta \ge \beta'$ if $\beta-\beta' \in \NE(X/S)$. 
   When $S=\Spec\mathbb{C}$, we write
 $$N^1(X)\cneq N^1(X/\Spec \mathbb{C}), \quad 
N_1(X)\cneq N_1(X/\Spec \mathbb{C}), $$
etc, for simplicity. 
We set $N_{\le 1}(X)$ to be 
$$N_{\le 1}(X)\cneq \mathbb{Z}\oplus N_1(X).$$
\begin{defi}\label{def:flopping}\emph{
A birational morphism $f\colon X\to Y$ is called 
a \textit{flopping contraction} if the following 
conditions are satisfied. }
\begin{itemize}
\item \emph{$f$ is isomorphic in codimension one, and 
$Y$ has only Gorenstein singularities.}
\item \emph{We have $\dim_{\mathbb{R}} N^1(X/Y)_{\mathbb{R}}=1$.}
\end{itemize}
\end{defi}
Let $f\colon X\to Y$ be a flopping contraction.  
The exceptional locus $C\subset X$ is a tree of rational 
curves, 
$$C=C_1\cup C_2 \cup \cdots \cup C_N, \quad 
C_i \cong \mathbb{P}^1.$$
(See for example~\cite[Lemma 3.4.1]{MVB}.) 
By the second condition of Definition~\ref{def:flopping}, 
there is a relative ample divisor $H$ on $X$ 
such that 
\begin{align}\label{N^1}
N^1(X/Y)=\mathbb{R}[H], \quad A(X/Y)=\mathbb{R}_{>0}[H].
\end{align}
\begin{defi}\label{def:flop}\emph{
Let $f\colon X\to Y$ be a flopping contraction. 
A \textit{flop} of $f$ is 
a birational map $\phi \colon X^{+} \dashrightarrow X$, 
which fits into a diagram
\begin{align}\label{fig:flop}\xymatrix{
X^{+} \ar[dr]_{f^{+}} \ar@{.>}[rr]^{\phi} & & \ar[dl]^{f} X \\
& Y, &
}\end{align}
such that $f^{+}$ is also a flopping contraction with
$f\circ \phi=f^{+}$, and $\phi$ is not an isomorphism. 
}
\end{defi}
It is well-known that a flop is unique if it exists, and 
any birational map between smooth projective Calabi-Yau 
3-folds is decomposed into a finite number of flops.
(cf.~\cite[Theorem~1]{Kawaflo}.) 
For a flop $\phi \colon X^{+} \dashrightarrow X$, 
we have the linear isomorphisms, 
\begin{align}
\label{strict1}
\phi_{\ast}&\colon N^1(X^{+}/Y)_{\mathbb{R}} \stackrel{\cong}{\lr}
 N^1(X/Y)_{\mathbb{R}}, \\
\label{strict2}
\phi_{\ast}&\colon N_1(X^{+}/Y)_{\mathbb{R}} \stackrel{\cong}{\lr}
 N_1(X/Y)_{\mathbb{R}}, 
\end{align}
where (\ref{strict1}) is the strict transform of divisors, and 
(\ref{strict2}) is the inverse of the dual of (\ref{strict1}). 
Note that $\phi_{\ast}$ takes $A(X^{+}/Y)$ to $-A(X/Y)$ and 
takes $\NE(X^{+}/Y)$ to $-\NE(X/Y)$.

\subsection{t-structures and tilting}
Let $\dD$ be a triangulated category, 
and $\aA \subset \dD$ the heart of a bounded 
t-structure on $\dD$. 
Here we recall the notion of torsion pairs and tilting. 
\begin{defi}\label{def:torsion}
\emph{\bf{\cite{HRS}}}
\emph{Let 
$(\tT, \fF)$ be a pair of full subcategories of $\aA$. 
We say $(\tT, \fF)$ is a \textit{torsion pair}
if the following conditions hold.} 
\begin{itemize}
\item \emph{$\Hom(T, F)=0$ for any $T\in \tT$ and 
$F\in \fF$.}
\item \emph{Any object $E\in \aA$
fits into an exact sequence, 
\begin{align}\label{fits}
0 \lr T \lr E \lr F \lr 0, 
\end{align}
with $T\in \tT$ and $F\in \fF$.}
\end{itemize}
\end{defi}
Given a torsion pair $(\tT, \fF)$ on 
$\aA$, its \textit{tilting} is 
defined by 
\begin{align}\label{dag}
\aA^{\dag}\cneq \left\{ E\in \dD : \begin{array}{l}
\hH^{-1}_{\aA}(E) \in \fF, \ \hH^{0}_{\aA}(E) \in \tT, \\
\hH^i_{\aA}(E)=0 \mbox{ for }i\notin \{-1, 0\}.
\end{array}
\right\},
\end{align}
i.e. $\aA^{\dag}=\langle \fF[1], \tT \rangle_{\ex}$ in 
$\dD$. 
Here $\hH^i_{\aA}(\ast)$ is the $i$-th cohomology 
functor with respect to the t-structure with heart $\aA$. 
It is known that $\aA^{\dag}$ is the 
heart of a bounded t-structure on $\dD$. 
(cf.~\cite[Proposition~2.1]{HRS}.)
Later we will need the following lemma. 
\begin{lem}\label{lem:later}
Let $\aA\subset \dD$ be the heart of a bounded t-structure 
on a triangulated category $\dD$. 
Suppose that $\aA$ is a noetherian abelian category. 

(i) Let $\tT \subset \aA$ be a full subcategory which 
is closed under extensions and quotients in $\aA$. 
Then for $\fF=\{E\in \aA : \Hom(\tT, E)=0\}$, the pair 
$(\tT, \fF)$ is a torsion pair on $\aA$. 

(ii) For a torsion pair $(\tT, \fF)$ on $\aA$, 
suppose that there is no infinite sequence in $\aA$,
\begin{align}\notag
E_0 \hookleftarrow E_1 \hookleftarrow \cdots E_j
\hookleftarrow E_{j+1} \hookleftarrow \cdots, 
\end{align}
with $E_j/E_{j+1}\notin \tT$. 
Let $\fF' \subset \fF$ be a full subcategory which 
is closed under extensions and subobjects in $\aA$. 
Then for $\tT'=\{E\in \aA : \Hom(E, \fF')=0\}$, the pair 
$(\tT', \fF')$ is a torsion pair on $\aA$. 
\end{lem}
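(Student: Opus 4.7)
The plan is to prove (i) directly using the noetherian hypothesis, and then to deduce (ii) from (i) by verifying the appropriate hypotheses on the candidate torsion class.

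For (i), the vanishing $\Hom(T,F)=0$ for $T\in\tT$ and $F\in\fF$ is tautological from the definition of $\fF$. To produce the torsion sequence for $E\in\aA$, I would consider the collection $\Sigma(E)\cneq\{T'\subset E : T'\in\tT\}$ of $\tT$-subobjects of $E$. Since $T_1+T_2\subset E$ is a quotient of $T_1\oplus T_2$, and the latter is an extension of $T_2$ by $T_1$, the closure hypotheses on $\tT$ imply that $\Sigma(E)$ is closed under finite sums. Noetherianity of $\aA$ then yields a maximal element $T\in\Sigma(E)$. Setting $F\cneq E/T$, I would verify $F\in\fF$ as follows: given any morphism $\varphi\colon T'\to F$ with $T'\in\tT$, the image $T''\cneq\im(\varphi)\subset F$ lies in $\tT$ by quotient-closure, and its preimage $\widetilde T\subset E$ under the projection $E\to F$ fits into a short exact sequence $0\to T\to\widetilde T\to T''\to 0$, so $\widetilde T\in\tT$ by extension-closure. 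Maximality of $T$ then forces $\widetilde T=T$, hence $T''=0$ and $\varphi=0$.

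For (ii), the plan is to reduce to (i) by taking as candidate torsion class $\tT\cneq\{E\in\aA : \Hom(E,\fF)=0\}$. Closure of $\tT$ under quotients is immediate, since any morphism from a quotient of $T\in\tT$ to some $F\in\fF$ lifts to a morphism $T\to F$, which must vanish. Closure under extensions is a diagram chase: for $0\to T_1\to T\to T_2\to 0$ with $T_1,T_2\in\tT$ and any $\varphi\colon T\to F$ with $F\in\fF$, the composite $T_1\to T\to F$ is zero (as $T_1\in\tT$), so $\varphi$ factors through $T_2$, and the induced map $T_2\to F$ also vanishes, giving $\varphi=0$. Applying (i) to this $\tT$ produces a torsion pair $(\tT,\fF')$ with $\fF'\cneq\{E\in\aA : \Hom(\tT,E)=0\}$. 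It then remains to identify $\fF'=\fF$: the inclusion $\fF\subset\fF'$ is tautological from the definition of $\tT$, while for the reverse inclusion one exploits the subobject- and extension-closure of $\fF$ together with the torsion decomposition just produced.

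The most delicate step is the identification $\fF'\subset\fF$ at the end of (ii), where one must promote an object $E$ satisfying the abstract orthogonality $\Hom(\tT,E)=0$ to concrete membership in the given subcategory $\fF$; this is precisely where both closure properties of $\fF$ must be used in tandem, whereas the rest of the argument is a routine exercise in noetherian abelian categories.
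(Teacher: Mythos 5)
Your part (i) is correct and is essentially the paper's own argument: the paper also takes a subobject $T\subset E$ with $T\in\tT$ that is maximal (using noetherianity of $\aA$ together with quotient- and extension-closure of $\tT$) and concludes $E/T\in\fF$; your verification of that last membership, via the image of a test map and the pulled-back extension, is exactly the detail the paper leaves implicit. No issues there.

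Part (ii), however, has a genuine gap at the step you yourself single out, and the route you propose cannot be repaired. Applying (i) to $\tT={}^{\perp}\fF$ produces a torsion pair $(\tT,\fF')$ with $\fF'=\tT^{\perp}$, and you must show $\fF'\subset\fF$. This inclusion is simply false under the stated hypotheses. Take $\aA=\Coh(\mathbb{P}^1)$ and $\fF=\Coh_0(\mathbb{P}^1)$, the finite-length sheaves; this $\fF$ is closed under extensions and subobjects, and $\aA$ is noetherian. Every non-zero coherent sheaf $E$ on $\mathbb{P}^1$ admits a non-zero map to a skyscraper $\oO_x$ for $x\in\Supp(E)$, so $\tT=0$ and $\fF'=\tT^{\perp}=\aA\supsetneq\fF$. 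Hence no combination of the closure properties of $\fF$ with the torsion decomposition can establish $\fF'\subset\fF$: for an object $E$ with vanishing torsion part the decomposition is $0\to 0\to E\to E\to 0$ and carries no information. (The same example shows that $(\tT,\fF)$ itself fails to be a torsion pair here, so the difficulty is intrinsic and not an artifact of your reduction.)

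What the paper intends by ``the proof of (ii) is similar'' is the mirror image of (i): given $E\notin\tT$, choose an epimorphism $E\twoheadrightarrow F$ onto an object of $\fF$ (possible after replacing $F$ by the image, using subobject-closure) whose kernel is \emph{minimal} among kernels of such epimorphisms, and then show this kernel lies in $\tT$ using subobject- and extension-closure of $\fF$. The entire content is the existence of that minimal kernel, i.e.\ a descending chain condition on the subobjects $K\subset E$ with $E/K\in\fF$ --- and this is precisely the input that neither noetherianity nor your reduction supplies, and that must come from the specific $\fF$ at hand (in the paper's application to $\pF[-1]\subset\aA_X$ it is furnished by the chain condition of Lemma~\ref{lem:noether}(ii), since the relevant objects are supported on the exceptional fibres). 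So your part (ii) needs to be replaced by this dual construction, with the required chain condition identified explicitly rather than derived from the noetherian hypothesis.
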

\begin{proof}
(i)  
Take $E\in \aA$ with $E\notin \fF$. Then there is 
$T\in \tT$ and a non-zero morphism 
$T\to E$. Since $\tT$ is closed under quotients, 
we may assume that $T\to E$ is a monomorphism in $\aA$. 
Take an exact sequence in $\aA$, 
\begin{align}\label{easylem}
0 \lr T \lr E \lr F \lr 0.
\end{align}
By the noetherian property of $\aA$ and the assumption
that $\tT$ is closed under extensions, we may assume that 
there is no $T\subsetneq T' \subset E$ with $T' \in \tT$. 
Then we have $F\in \fF$ and (\ref{easylem}) gives the 
desired sequence (\ref{fits}). 

(ii) For $E \in \aA$, we have an exact sequence
\begin{align*}
0 \to T \to E \to F \to 0,
\end{align*}
with $T \in \tT$ and $F \in \fF$. 
Using the assumption, the dual argument of (i)
shows that there is an exact sequence in $\aA$, 
\begin{align*}
0 \to F_1 \to F \to F_2 \to 0,
\end{align*}
with $F_1 \in \fF \cap \tT'$ and $F_2 \in \fF'$. 
Combining the above two exact sequences, we obtain 
an exact sequence, 
\begin{align*}
0 \to T' \to E \to F' \to 0,
\end{align*}
with $T' \in \tT'$ and $F' \in \fF'$. 
Therefore $(\tT', \fF')$ is a torsion pair on 
$\aA$. 
\end{proof}

\subsection{Notation of abelian categories}
Here we give some notation of abelian categories 
which will be used in this paper. 
\begin{defi}\emph{
Let $A$ be a sheaf of $\oO_X$-algebras 
on a variety $X$, which is coherent as an $\oO_X$-module. 
 We denote by $\Coh(A)$ the 
abelian category of right coherent $A$-modules. 
For an object $E\in \Coh(A)$, the support of $E$ is defined to be
the support of $E$ as an $\oO_X$-module. We set 
\begin{align*}
\Coh_{0}(A)&=\{ E\in \Coh(A) : \dim \Supp(E)=0\}, \\
\Coh_{\le 1}(A)&=\{E\in \Coh(A) : \dim \Supp(E)\le 1\}, \\
\Coh_{\ge 2}(A)&=\{E\in \Coh(A): \Hom(\Coh_{\le 1}(A), E)=0\}.
\end{align*}
If $A=\oO_X$, we write $\Coh_{\bullet}(\oO_X)$ as $\Coh_{\bullet}(X)$.}
\end{defi}
By Lemma~\ref{lem:later}, 
the pair $(\Coh_{\le 1}(A), \Coh_{\ge 2}(A))$
is a torsion pair on $\Coh(A)$.
\begin{defi}\label{dagga}\emph{
We define $\Coh^{\dag}(A)$ to be the tilting with 
respect to $(\Coh_{\le 1}(A), \Coh_{\ge 2}(A))$, i.e. 
$$\Coh^{\dag}(A)=\langle \Coh_{\ge 2}(A)[1], \Coh_{\le 1}(A)\rangle_{\ex}.$$
If $A=\oO_X$, we write $\Coh^{\dag}(\oO_X)$ as $\Coh^{\dag}(X)$.}\end{defi}
\subsection{Derived equivalence under flops}\label{subsec:der}
Let $f\colon X\to Y$ be a flopping contraction 
from a smooth projective Calabi-Yau 3-fold $X$. 
(cf.~Definition~\ref{def:flopping}.)
In this situation, Bridgeland~\cite{Br1}
associates the subcategories $\ppPPer(X/Y)$ on 
$D^b(\Coh(X))$
for $p=0, -1$, as follows. 
\begin{defi}\label{def:pp}
\emph{
We define $\ppPPer(X/Y) \subset D^b(\Coh(X))$
for $p=0, -1$ to be 
$$\ppPPer(X/Y)=\left\{ E\in D^b(\Coh(X)) : \begin{array}{l}
\dR f_{\ast}E \in \Coh(Y), \\
\Hom^{<-p}(E, \cC)=\Hom^{<-p}(\cC, E)=0.\end{array}\right\},$$
where $\cC=\{ F\in \Coh(X) \mid \dR f_{\ast}F=0\}$.
We also define $\ppPPer_{0}(X/Y)$ and $\ppPPer_{\le 1}(X/Y)$ to be 
\begin{align*}
\ppPPer_{0}(X/Y)&=\{ E\in \ppPPer(X/Y) : \dim \Supp \dR f_{\ast}E=0\}, \\
\ppPPer_{\le 1}(X/Y)&=\{E\in \ppPPer(X/Y): \dim \Supp(E) \le 1\}.
\end{align*}} 
\end{defi}
\begin{rmk}
By the definition, it is easy to see that 
$$\oO_X \in \ppPPer(X/Y), \quad p=0, -1.$$
\end{rmk}
It is proved in~\cite{Br1} that $\ppPPer(X/Y)$ are the 
hearts of bounded t-structures on $D^b(\Coh(X))$, hence 
they are abelian categories. 
The categories $\ppPPer_{\le 1}(X/Y)$ are also the hearts
of bounded t-structures on $D^b(\Coh_{\le 1}(X))$. 
(cf.~\cite[Proposition~5.2]{ToBPS}.)
The generators of $\ppPPer_{\le 1}(X/Y)$ are described as 
follows. 
Let $C_1, \cdots, C_N \subset X$ be the 
irreducible components of the exceptional locus of $f$.
We have the following. 
\begin{lem}\label{lem:straight}
The abelian
categories $\ppPPer_{\le 1}(X/Y)$ are described as 
\begin{align}\label{per:gen1}
\oPPer_{\le 1}(X/Y)&=
\langle \omega_{f^{-1}(y)}[1], \oO_{C_i}(-1), \widetilde{\Coh}_{\le 1}(X)
\rangle_{\ex}, \\ 
\label{per:gen2}
\iPPer_{\le 1}(X/Y)&=
\langle \oO_{f^{-1}(y)}, \oO_{C_i}(-1)[1], \widetilde{\Coh}_{\le 1}(X)
\rangle_{\ex}. 
\end{align}
Here $y\in \Sing(Y)$, $1\le i\le N$, and $\widetilde{\Coh}_{\le 1}(X)$
is defined to be 
$$\widetilde{\Coh}_{\le 1}(X)\cneq \{F\in \Coh_{\le 1}(X) \mid 
C_i \nsubseteq \Supp(F) \mbox{ for all }i\}.$$
\end{lem}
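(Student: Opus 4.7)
The plan is to prove the lemma by double inclusion, working within the description of $\ppPPer(X/Y)$ as a tilt of $\Coh(X)$ along a torsion pair determined by the null category $\cC = \{F \in \Coh(X) : \dR f_{\ast} F = 0\}$; for a flopping contraction $\cC$ is a finite-length abelian subcategory of $\Coh(X)$ whose simples are exactly the sheaves $\oO_{C_i}(-1)$, $1 \le i \le N$. The easy inclusion $\supseteq$ amounts to checking membership of each listed object in $\ppPPer_{\le 1}(X/Y)$. For $F \in \widetilde{\Coh}_{\le 1}(X)$ the hypothesis $C_i \nsubseteq \Supp(F)$ makes $f|_{\Supp(F)}$ finite, so $\dR f_{\ast} F$ is a sheaf, and the generic points of $\Supp(F)$ avoid each $C_i$, forcing $\Hom(\cC, F) = \Hom(F, \cC) = 0$; this puts $F$ in $\ppPPer(X/Y)$ for both values of $p$. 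The object $\oO_{C_i}(-1)$ is a simple of $\cC$, hence lies in $\oPPer$ without shift and in $\iPPer$ after applying $[1]$. For the fiber sheaves $\oO_{f^{-1}(y)}$ and $\omega_{f^{-1}(y)}[1]$ at singular $y \in \Sing(Y)$, I would filter along the dual graph of the tree of rational curves $f^{-1}(y)$ by $\oO_{C_j}$'s and their twists and verify the defining vanishings directly; these are the simples of $\ppPPer(X/Y)$ supported at $y$, as in~\cite{Br1}, \cite{MVB}.

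For the reverse inclusion $\subseteq$, take $E \in \ppPPer_{\le 1}(X/Y)$. Its standard cohomology sheaves $H^{-1}, H^0 \in \Coh_{\le 1}(X)$ fall into the appropriate pieces of the torsion pair on $\Coh(X)$ whose tilt is $\ppPPer(X/Y)$. I would decompose each $H^i$ by pulling off its maximal subsheaf (for $H^0$) or quotient (for $H^{-1}$) lying in $\widetilde{\Coh}_{\le 1}(X)$, leaving a piece whose support is contained set-theoretically in the exceptional locus $\bigcup C_i$. The $\widetilde{\Coh}_{\le 1}(X)$-piece contributes directly to the filtration by the stated generators, while the exceptional piece splits as a direct sum over the disjoint connected components $f^{-1}(y)$ of its support. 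Within each singular fiber it then remains to express the piece as an iterated extension in $\ppPPer_{\le 1}(X/Y)$ of the remaining stated generators $\oO_{C_i}(-1)$ (appropriately shifted) and $\oO_{f^{-1}(y)}$ or $\omega_{f^{-1}(y)}[1]$.

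The main technical obstacle will be this final step: producing the explicit extension filtration for an object of $\ppPPer_{\le 1}(X/Y)$ supported on a single singular fiber $f^{-1}(y)$. I would handle it by localizing $Y$ at $y$ and applying Van den Bergh's derived equivalence~\eqref{genMc} to transport the question to one about finite-length modules over the non-commutative algebra $A_Y$ (respectively its $p=-1$ analogue), where the listed generators correspond to the simple modules supported at $y$ and the required filtration becomes a Jordan--H\"older filtration guaranteed by the length-finite property of that module category. Verifying that this local correspondence matches the generators of the lemma on the nose---in particular identifying $\oO_{f^{-1}(y)}$ and $\omega_{f^{-1}(y)}[1]$ with the appropriate simple $A_Y$-modules at $y$---is the substantive content of this step, and relies on the explicit form of $A_Y$ from~\cite{MVB}.
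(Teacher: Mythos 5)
Your proposal is correct and follows essentially the same route as the paper, which simply cites \cite{MVB} and \cite[Proposition~5.2]{ToBPS}: both references argue exactly as you do, exhibiting $\ppPPer_{\le 1}(X/Y)$ as a tilt of $\Coh_{\le 1}(X)$ and identifying the objects supported on the exceptional fibers with finite-length $\pAA_Y$-modules, whose vertex simples correspond to $\oO_{C_i}(-1)$ (suitably shifted) and to $\oO_{f^{-1}(y)}$, resp.\ $\omega_{f^{-1}(y)}[1]$. One caveat: your claim that $C_i \nsubseteq \Supp(F)$ forces $\Hom(\cC, F)=0$ is false --- e.g.\ $\Hom(\oO_{C_i}(-1), \oO_x)\neq 0$ for $x\in C_i$, while $\oO_x\in\widetilde{\Coh}_{\le 1}(X)$ --- although $\Hom(F,\cC)=0$ does hold by purity of $\oO_{C_i}(-1)$, and that is the only degree-zero vanishing actually required for membership in $\ppPPer(X/Y)$ (Bridgeland's conditions are asymmetric in $E$ and $\cC$; the symmetric form printed in Definition~\ref{def:pp} is a typo, as it would already exclude $\oO_{C_i}(-1)[1]$ from $\iPPer(X/Y)$).
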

\begin{proof}
This is a straightforward generalization of~\cite{MVB}
and the proof is written in~\cite[Proposition~5.2]{ToBPS}.
\end{proof}
Let
 $\phi \colon X^{+} \dashrightarrow X$ be the flop of 
$f$. (cf.~Definition~\ref{def:flop}.)
The following theorem is proved in~\cite{Br1}. 
\begin{thm}\emph{\bf{\cite{Br1}}}
There is an equivalence of bounded
 derived categories of coherent sheaves, 
\begin{align}\label{standard}
\Phi\colon D^b(\Coh(X^{+})) \stackrel{\sim}{\lr} D^b(\Coh(X)), 
\end{align}
which takes $\iPPer(X^{+}/Y)$ to $\oPPer(X/Y)$. 
\end{thm}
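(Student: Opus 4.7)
The plan is to follow Bridgeland's original strategy, constructing $\Phi$ as a Fourier--Mukai transform whose kernel parametrizes ``perverse point sheaves'' on $X$ over $Y$. First I would define, for each closed point $y\in Y$, a numerical class $\alpha_y \in K(\oPPer(X/Y))$ corresponding to the structure sheaf $\oO_{f^{-1}(y)}$, and consider the moduli stack $\mM$ of simple objects $E \in \oPPer(X/Y)$ with $\cl(E)=\alpha_y$ for some $y$, together with a trivialization $\det \dR f_{\ast}E \simeq \oO_Y$. Away from the exceptional locus this clearly recovers ordinary points of $X$ (equivalently of $Y$), so the issue is local around $\Sing(Y)$.

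Next I would show $\mM$ is representable by a smooth projective variety $M$ with a morphism $M\to Y$, and in fact $M \cong X^{+}$. The key inputs are: (i) the description of $\oPPer_{\le 1}(X/Y)$ in Lemma~\ref{lem:straight}, which shows that the simple objects of the relevant class are either genuine points off the exceptional locus or certain extensions built from $\omega_{f^{-1}(y)}[1]$ and the $\oO_{C_i}(-1)$; (ii) deformation theory, which combined with the Calabi--Yau condition and $\Ext^{>0}$ vanishing between simple perverse sheaves and their shifts gives smoothness and the expected dimension $3$; (iii) projectivity via GIT for framed quiver-type data coming from Van den Bergh's tilting bundle (Theorem~\ref{thm:nc}), which identifies $M$ with a moduli of $A_Y$-modules and hence, by flatness of the family of perverse structure sheaves over $X^{+}$, with $X^{+}$ itself. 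This identification simultaneously produces the universal object $\pP\in D^b(\Coh(X^{+}\times X))$.

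With $\pP$ in hand, define $\Phi(-) \cneq \dR p_{X\ast}(p_{X^{+}}^{\ast}(-) \otimes^{\mathbf{L}} \pP)$. To upgrade $\Phi$ to an equivalence I would apply the Bondal--Orlov / Bridgeland--Maciocia criterion: it suffices to check that for any two closed points $x^{+}_1, x^{+}_2 \in X^{+}$, the perverse point sheaves $\pP_{x^{+}_1}, \pP_{x^{+}_2}\in \oPPer(X/Y)$ are simple, orthogonal when $x^{+}_1\neq x^{+}_2$, and satisfy the Serre-duality matching $\Ext^i(\pP_{x^{+}},\pP_{x^{+}}) \cong \Ext^{3-i}(\pP_{x^{+}},\pP_{x^{+}})^{\vee}$ (automatic from the Calabi--Yau property of $X$). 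Simplicity and orthogonality follow from the moduli-theoretic construction, and the stalk count off the exceptional locus gives the spanning-class hypothesis. Then $\Phi$ is fully faithful, and it is an equivalence because both source and target are Calabi--Yau of the same dimension (so the left adjoint equals the right adjoint, and full faithfulness forces surjectivity on objects via Serre functors).

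Finally, to show $\Phi(\iPPer(X^{+}/Y))=\oPPer(X/Y)$, I would verify the defining cohomological conditions in Definition~\ref{def:pp} directly. Because $\oPPer(X/Y)=\Phi(\iPPer(X^{+}/Y))$ is the heart of a bounded t-structure and the point-like objects $\pP_{x^{+}}$ lie in $\oPPer(X/Y)$, it suffices to show that $\Phi$ sends the generators of $\iPPer(X^{+}/Y)$ (namely $\oO_{(f^{+})^{-1}(y)}$, the shifts $\oO_{C_i^{+}}(-1)[1]$, and the torsion-free/torsion pieces of ordinary coherent sheaves) into $\oPPer(X/Y)$, and that the inverse functor sends generators of $\oPPer(X/Y)$ back. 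This reduces to a local calculation near each $y\in \Sing(Y)$, which is exactly the content worked out in Van den Bergh's comparison (\ref{genMc}). The main obstacle is the first step: proving that the moduli space of perverse point sheaves on $X/Y$ is actually isomorphic to $X^{+}$ (rather than some other small modification), which is where the hypothesis $\dim N^1(X/Y)_{\mathbb{R}}=1$ and the uniqueness of the flop (\ref{fig:flop}) are essential.
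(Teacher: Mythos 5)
The paper does not actually prove this statement; it is quoted from Bridgeland's \emph{Flops and derived categories} \cite{Br1}, so there is no internal argument to compare against. Your outline is essentially a reconstruction of Bridgeland's original proof (with Van den Bergh's tilting/GIT description of the moduli problem grafted on), and at the level of strategy it is the right one: a fine moduli space of perverse point sheaves, the universal object as Fourier--Mukai kernel, a pointwise Bondal--Orlov/Bridgeland--Maciocia criterion for the equivalence, identification of the moduli space with $X^{+}$ via uniqueness of flops, and a generator chase to match the two hearts.

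That said, three steps as written either fail or hide the real content. First, the numerical class: a perverse point sheaf over $y$ is a quotient $\oO_X \twoheadrightarrow E$ in the perverse heart (in $\iPPer(X/Y)$, consistently with the paper's remark that Bridgeland's moduli constructions are carried out for $p=-1$) with the invariants of a \emph{single point}, equivalently $\dR f_{\ast}E \cong \oO_y$; it is not an object of class $[\oO_{f^{-1}(y)}]$, which for $y\in \Sing(Y)$ is a $1$-dimensional class. With your definition the fiber of the moduli space over a singular point would consist of finitely many objects rather than the exceptional $\mathbb{P}^1$'s of $X^{+}$. Second, smoothness: on a Calabi--Yau $3$-fold, Serre duality for a simple object $P$ with no negative self-Exts only gives $\dim\Ext^1(P,P)=\dim\Ext^2(P,P)$; there is no a priori reason this equals $3$, so ``deformation theory plus the CY condition'' does not yield smoothness of expected dimension. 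In Bridgeland's argument smoothness of the moduli space is one of the hardest points and is deduced \emph{after} full faithfulness, using the Intersection Theorem of commutative algebra to bound the homological dimension of the kernel. Third, orthogonality: for distinct points $x_1^{+}\neq x_2^{+}$ lying on the same exceptional curve of $X^{+}$, the objects $\pP_{x_1^{+}}$ and $\pP_{x_2^{+}}$ are both supported on the whole fiber $f^{-1}(y)$, so the vanishing of $\Hom(\pP_{x_1^{+}},\pP_{x_2^{+}}[i])$ for all $i$ emphatically does not ``follow from the moduli-theoretic construction''; verifying it is the technical heart of the proof. As it stands, your proposal is a correct roadmap of \cite{Br1} but defers exactly the steps that make the theorem nontrivial.
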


\subsection{Flops and non-commutative algebras}
Let $f\colon X\to Y$ be a flopping contraction
as in Definition~\ref{def:flopping}. 
By Van den Bergh~\cite{MVB}, 
the abelian categories $\ppPPer(X/Y)$ are related to 
sheaves of non-commutative algebras on $Y$. 
\begin{thm}\emph{\bf{\cite{MVB}}}\label{thm:nc}
There 
are vector bundles $\pE$ 
on $X$ for $p=0, -1$, which admit derived equivalences, 
\begin{align}\label{nc}
\pPhi=\dR f_{\ast} \dR \hH om(\pE, \ast) \colon 
D^b(\Coh(X)) \stackrel{\cong}{\lr} D^b(\Coh(\pAA_{Y})).
\end{align}
Here $\pAA_{Y}=f_{\ast}\eE nd(\pE)$
 are sheaves of non-commutative algebras on $Y$.
  The equivalences (\ref{nc}) restrict to equivalences 
 \begin{align}\label{restper}
 \pPhi \colon \ppPPer(X/Y) \stackrel{\sim}{\lr} \Coh(\pAA_{Y}).
 \end{align}
 \end{thm}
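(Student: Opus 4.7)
The plan is to construct, for each $p\in\{0,-1\}$, a locally free sheaf $\pE$ on $X$ which is a projective generator (tilting object) of the abelian category $\ppPPer(X/Y)$, and then obtain the equivalence (\ref{nc}) from standard Morita theory applied relatively over $Y$. Since both $\dR f_{\ast}$ and $\dR\HOM$ are local on $Y$, the strategy is to reduce the key verifications to local computations on formal neighbourhoods of the singular fibres, where Van den Bergh's analysis of contracted ideals applies, and then to globalise using properness of $f$ together with the Calabi--Yau condition.

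For the construction, Lemma~\ref{lem:straight} identifies the simple objects of $\ppPPer_{\le 1}(X/Y)$ supported on the exceptional locus as $\oO_{C_i}(-1)$ together with $\omega_{f^{-1}(y)}[1]$ (for $p=0$) or $\oO_{f^{-1}(y)}$ (for $p=-1$). For each irreducible exceptional component $C_i$ I would produce a locally free sheaf $\nN_i^{(p)}$ of appropriate rank satisfying the intersection condition $\det\nN_i^{(p)}\cdot C_j=\delta_{ij}$ and the relative vanishing $\dR^{\ge 1}f_{\ast}\HOM(\nN_i^{(p)},S)=0$ for every simple $S$ of $\ppPPer_{\le 1}(X/Y)$; such bundles can be obtained as iterated extensions of line bundles on $X$, exploiting relative ampleness of the generator $H$ of $N^1(X/Y)$ in (\ref{N^1}) to prescribe the intersection profile. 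Set
\[
\pE \cneq \oO_X \oplus \bigoplus_{i=1}^{N}\nN_i^{(p)}, \qquad \pAA_Y \cneq f_{\ast}\eE nd(\pE),
\]
the latter coherent over $\oO_Y$ by properness of $f$. The tilting properties to verify are: (a) $\pE\in\ppPPer(X/Y)$ and $\Ext^{>0}(\pE,F)=0$ for every $F\in\ppPPer(X/Y)$; (b) $\dR\HOM(\pE,-)$ is conservative on $\ppPPer(X/Y)$. Using the list of generators in Lemma~\ref{lem:straight} these reduce to testing on the finite collection of simples (where (b) becomes the nondegeneracy of $\det\nN_i^{(p)}\cdot C_j=\delta_{ij}$ together with the pairing of $\oO_X$ with $\widetilde{\Coh}_{\le 1}(X)$). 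Granted (a) and (b), the functor $\pPhi=\dR f_{\ast}\dR\HOM(\pE,-)$ is t-exact between the t-structures with hearts $\ppPPer(X/Y)$ and $\Coh(\pAA_Y)$, sends $\pE$ to the projective generator $\pAA_Y$ of $\Coh(\pAA_Y)$, and the adjoint pair $\pPhi\dashv(-)\dotimes_{\pAA_Y}\pE$ has unit and counit invertible on generators, hence is a derived equivalence; this gives (\ref{nc}), and t-exactness immediately yields (\ref{restper}).

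The main obstacle is the global construction of the $\nN_i^{(p)}$: one must produce a locally free sheaf on the projective Calabi--Yau 3-fold $X$ whose restriction to each formal neighbourhood of a singular fibre of $f$ realises Van den Bergh's local tilting generator. This rests on three inputs from the geometry of flopping contractions: relative ampleness of the generator of $N^1(X/Y)=\mathbb{R}[H]$ to produce line bundles with prescribed intersection numbers on the exceptional tree $C_1\cup\cdots\cup C_N$, the Calabi--Yau condition $\omega_X\cong\oO_X$ to convert higher $\Ext$-vanishing into its dual under Serre duality, and a relative Grauert--Riemenschneider-type vanishing that guarantees $f_{\ast}\eE nd(\pE)$ is concentrated in cohomological degree zero rather than only after applying $\dR f_{\ast}$. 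Once these are in place, the rest of the argument is formal tilting theory.
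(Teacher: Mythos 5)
Your overall strategy---exhibit $\pE$ as a projective generator (tilting bundle) of the heart $\ppPPer(X/Y)$, check everything locally over $Y$, and then invoke Morita/tilting theory to get the t-exact equivalence---is exactly Van den Bergh's, and the paper does not reprove the theorem: it cites \cite{MVB} and its ``proof'' only recalls the construction of $\pE$ needed later. That construction is a single global extension: choose a globally generated ample $\lL_X$ on $X$, a surjection $(\lL_Y^{-1})^{\oplus a}\twoheadrightarrow R^1f_{\ast}\lL_X^{-1}$ for $\lL_Y$ sufficiently ample on $Y$, take the corresponding extension $0\to\lL_X^{-1}\to\oE'\to f^{\ast}(\lL_Y^{-1})^{\oplus a}\to 0$, and set $\oE=\oO_X\oplus\oE'$; the verification that this is a projective generator of the perverse heart is then a local statement over $Y$ carried out in \cite{MVB}.

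The concrete gap in your version is the construction of the summands $\nN_i^{(p)}$. You require $\det\nN_i^{(p)}\cdot C_j=\delta_{ij}$, and you propose to achieve this from ``relative ampleness of the generator $H$ of $N^1(X/Y)$.'' This is impossible globally whenever $N\ge 2$. By Definition~\ref{def:flopping} we have $\dim_{\mathbb{R}}N^1(X/Y)_{\mathbb{R}}=1$, hence by the perfect pairing also $\dim_{\mathbb{R}}N_1(X/Y)_{\mathbb{R}}=1$; since $H\cdot C_j>0$ for every $j$, all the classes $[C_j]$ are \emph{positive} multiples of one another in $N_1(X/Y)_{\mathbb{R}}$. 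Consequently, for any line bundle $L$ on $X$ (in particular for $L=\det\nN_i^{(p)}$, which is a line bundle no matter how $\nN_i^{(p)}$ is built from iterated extensions) the numbers $L\cdot C_j$ all have the same sign or all vanish, so $L\cdot C_i=1$ and $L\cdot C_j=0$ for $j\neq i$ cannot hold simultaneously. Line bundles with intersection profile $\delta_{ij}$ exist only Zariski- or formally-locally over $Y$, which is precisely why Van den Bergh's local analysis decomposes the generator into one indecomposable summand per exceptional component only after localizing, while the global object is the single extension $\oE'$ above (whose determinant pairs \emph{positively} with every $C_j$). Your reduction of tilting conditions to the simples of Lemma~\ref{lem:straight}, and the Morita step afterwards, are fine; but as written the input bundles do not exist, so the proof does not go through without replacing your construction by the extension construction (or some equivalent global device).
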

 \begin{proof}
 Here we briefly recall how to construct $\pE$,
 which will be needed in the later section. 
 We treat the case of $p=0$ for simplicity. 
 Let $\lL_{X}$ be 
 a globally generated ample line bundle on $X$. 
 We have a surjection of sheaves,
 $$(\lL_{Y}^{-1})^{\oplus a}\twoheadrightarrow R^1 f_{\ast}\lL_X^{-1}, $$
for a sufficiently ample line bundle $\lL_Y$ on $Y$ 
and $a>0$. Taking the adjunction, we obtain the 
short exact sequence, 
\begin{align}\label{oE}
0 \lr \lL_X^{-1}  \lr \oE' \lr f^{\ast}(\lL_Y^{-1})^{\oplus a}\lr 0.
\end{align}
Then $\oE$ is defined to be 
$$\oE=\oO_X \oplus \oE'.$$
 The constructions of $\iE'$ and 
 $\iE$ are similar. 
(See~\cite{MVB} for the detail.)
\end{proof}
\begin{rmk}
By the construction, the sheaves of algebras $\pAA_{Y}$
are direct sums of locally projective 
$\pAA_{Y}$-modules, 
\begin{align}\label{locpro}
\pAA_{Y}=\pAA_{Y}'\oplus \pAA_{Y}'',
\end{align}
where $\pAA_{Y}'=\pPhi(\oO_X)$ and 
$\pAA_{Y}''=\pPhi(\pE')$. 
\end{rmk}
Note that the torsion pair $(\Coh_{\le 1}(\pAA_{Y}), \Coh_{\ge 2}(\pAA_{Y}))$
induces the torsion pair 
$$(\ppPPer_{\le 1}(X/Y), \ppPPer_{\ge 2}(X/Y)),$$
on $\ppPPer(X/Y)$ via the equivalence 
$\pPhi\colon \ppPPer(X/Y) \stackrel{\sim}{\to} \Coh(\pAA_Y)$. 
\begin{defi}\label{def:perdag}
\emph{
We define the abelian category $\pPPer^{\dag}(X/Y)$ to be
the tilting with respect to the torsion pair 
$(\ppPPer_{\le 1}(X/Y), \ppPPer_{\ge 2}(X/Y))$, i.e. 
$$\pPPer^{\dag}(X/Y)=\langle \ppPPer_{\ge 2}(X/Y)[1], \ppPPer_{\le 1}(X/Y)
\rangle_{\ex}.$$
}\end{defi}
\begin{rmk}
By the construction, the equivalence $\pPhi \colon D^b(\Coh(X))
\stackrel{\sim}{\to} D^b(\Coh(\pAA_Y))$ 
restricts to the equivalence between 
$\ppPPer^{\dag}(X/Y)$ and $\Coh^{\dag}(\pAA_Y)$. 
\end{rmk}
\subsection{Donaldson-Thomas theory}
Here we introduce Donaldson-Thomas invariants. 
For $(n, \beta)\in \mathbb{Z}\oplus N_1(X)$, 
let $I_n(X, \beta)$ be the moduli space of 
subschemes $C\subset X$ with 
$$\dim C \le 1, \quad [C]=\beta, \quad \mbox{and} \quad 
\chi(\oO_C)=n.$$
There is a
symmetric perfect obstruction theory on $I_n(X, \beta)$~\cite{Tho}, 
and the associated virtual cycle,
$$[I_n(X, \beta)]^{\rm{vir}} \in A_{0}(I_n(X, \beta)).$$
\begin{defi}\emph{
The \textit{Donaldson-Thomas invariant} 
is defined by 
$$I_{n, \beta}\cneq 
\int_{[I_n(X, \beta)]^{\rm{vir}}}1\in \mathbb{Z}.$$}
\end{defi}
Recall that for any scheme $M$, Behrend~\cite{Beh} associates 
a canonical constructible 
function, 
\begin{align}\label{Behcon}
\nu_{M} \colon M \to \mathbb{Z}, 
\end{align}
such that if $M$ has a symmetric perfect obstruction theory, 
we have 
$$\int_{[M]^{\rm{vir}}}1 =\sum_{i\in \mathbb{Z}}i\chi(\nu_{M}^{-1}(i)).$$
Here $\chi(\ast)$ is the topological Euler characteristic. 
In this way, the invariant $I_{n, \beta}$ is also 
defined as a weighted Euler characteristic with respect to 
the Behrend function on $I_n(X, \beta)$. 
The relevant generating series are defined as follows. 
\begin{defi}\emph{
Let $f\colon X\to Y$ be a flopping contraction. 
We define the generating series ${}{\DT}(X)$
and ${}{\DT}(X/Y)$ to be 
\begin{align*}
{}{\DT}(X)&\cneq \sum_{n, \beta}I_{n, \beta}x^n y^{\beta}, \\
{}{\DT}(X/Y)&\cneq \sum_{n, f_{\ast}\beta=0}
I_{n, \beta}x^n y^{\beta}. 
\end{align*}
The reduced series are defined by 
\begin{align*}
\DT'(X)\cneq \frac{\DT(X)}{\DT_{0}(X)}, \quad 
\DT'(X/Y)\cneq \frac{\DT(X/Y)}{\DT_{0}(X)}.
\end{align*}
Here $\DT_{0}(X)$ is given by~\cite{BBr},
\cite{Li}, \cite{LP},
$$\DT_{0}(X)\cneq \sum_{n}I_{n, 0}x^n
=M(-x)^{\chi(X)},$$
for the MacMahon function, 
$$M(x)=\prod_{k\ge 1}(1-x^{k})^{-k}.$$}
\end{defi}

\subsection{Pandharipande-Thomas theory}
The notion of stable pairs and the associated counting 
invariants are introduced by Pandharipande and 
Thomas~\cite{PT} in order to 
give a geometric interpretation of the reduced DT theory. 
\begin{defi}\emph{\bf{\cite{PT}}}
\emph{
A 
pair $(F, s)$ is a
\textit{stable pair}
if it satisfies the following conditions.}
\begin{itemize}
\item \emph{$F\in \Coh_{\le 1}(X)$ is a pure 1-dimensional 
sheaf. }
\item \emph{$s\colon \oO_X \to F$ is a 
morphism with 0-dimensional cokernel.} 
\end{itemize}
\end{defi}
As a convention, the pair $(0, 0)$
is also a stable pair. 
For $(n, \beta)\in \mathbb{Z}\oplus N_1(X)$, 
we denote by 
$P_n(X, \beta)$ the moduli space of stable pairs 
$(F, s)$ with
$$[F]=\beta, \quad \chi(F)=n.$$
It is proved in~\cite{PT} that $P_n(X, \beta)$
is a projective scheme
with a symmetric perfect obstruction theory, by 
viewing a stable pair $(F, s)$ as a two 
term complex, 
\begin{align}\label{twoterm}
I^{\bullet}=
\cdots \to 0 \to \oO_X \stackrel{s}{\to}F \to 0 \to \cdots
\in D^b(X).
\end{align}
We also call the two term complex (\ref{twoterm}) as a 
stable pair. 
There is an associated virtual fundamental cycle, 
$$[P_n(X, \beta)]^{\rm{vir}} \in A_{0}(P_n(X, \beta)).$$
\begin{defi}\emph{
The \textit{Pandharipande-Thomas invariant} 
$P_{n, \beta}$ is defined as
$$P_{n, \beta}=\int_{[P_n(X, \beta)]^{\rm{vir}}}1 \in \mathbb{Z}.$$}
\end{defi}
The relevant generating series are defined as follows. 
\begin{defi}\emph{
Let $f\colon X\to Y$ be a flopping contraction. 
We define the generating series 
${}{\PT}(X)$ and ${}{\PT}(X/Y)$ to be}
\begin{align*}
{}{\PT}(X)&=\sum_{n, \beta}P_{n, \beta}x^n y^{\beta}, \\
{}{\PT}(X/Y)&=\sum_{n, f_{\ast}\beta=0}
P_{n, \beta}x^n y^{\beta}.
\end{align*}
\end{defi}
The following
result, which is conjectured in~\cite[Conjecture~3.3]{PT}, 
is proved in~\cite{Tcurve1}
using the results of~\cite{JS} and
the announced result in~\cite{BG}.  
\begin{thm}{\bf\cite[Theorem~1.2]{Tcurve1}}\label{mainTcurve}
Assuming Conjecture~\ref{conj:BG} below, we have 
\begin{align*}
{}{\DT}'(X)&=\PT(X), \\
{}{\DT}'(X/Y)&=\PT(X/Y).
\end{align*}
In particular, we have the equality of the generating series, 
$$\frac{{}{\DT}(X)}{{}{\DT}(X/Y)}
=\frac{{}{\PT}(X)}{{}{\PT}(X/Y)}.$$
\end{thm}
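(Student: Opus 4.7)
The plan is to derive the DT/PT identity from wall-crossing of weak stability conditions on the triangulated category $\dD_X = \langle \oO_X, \Coh_{\le 1}(X)\rangle_{\tr}$ introduced in the paper. The key observation is that both ideal sheaves of curves $I_C$ (viewed as two-term complexes $\oO_X \to \oO_C$) and stable pair complexes $I^{\bullet}=(\oO_X \to F)$ belong to $\dD_X$, and both families should arise as semistable objects for two different points in the space of weak stability conditions $\Stab_{\Gamma_{\bullet}}(\dD_X)$ sharing a common heart.

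First I would fix the heart $\aA = \langle \oO_X[-1], \Coh_{\le 1}(X)\rangle_{\ex} \subset \dD_X$ of a bounded t-structure on $\dD_X$ and construct a one-parameter family $\sigma_t = (Z_t, \aA) \in \Stab_{\Gamma_{\bullet}}(\dD_X)$, $t>0$, whose central charge places the rank coordinate in its own subquotient of the filtration $\Gamma_{\bullet}$ (so that the rank dominates stability) and then weights the remaining Chern data by an ample class scaled by $t$. One then shows that in the limit $t \to \infty$ (large volume), the $\sigma_t$-semistable objects $E\in \aA$ with $\cl(E)=(-1,\beta,-n)$ are exactly the shifts $I_C[-1]$, so the counting invariant reproduces $I_{n,\beta}$; while in the limit $t \to 0^+$ they become exactly the two-term complexes $(\oO_X \to F)$ associated to stable pairs $(F,s)$, reproducing $P_{n,\beta}$. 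Both limits should live in the same connected component of $\Stab_{\Gamma_{\bullet}}(\dD_X)$ by Theorem~\ref{thm:stab} and Lemma~\ref{lem:scont}.

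Second, I would apply the Joyce--Song wall-crossing formula along a continuous path from $t \gg 0$ to $t \ll 1$. There are locally finitely many walls $t=t_i$, and at each wall the destabilizing Jordan--Holder factors of $\sigma_t$-semistable objects in the relevant class automatically lie in $\Coh_{0}(X)$ (zero-dimensional sheaves), because the rank $-1$ coordinate and the curve class in $N_1(X)$ cannot be split within $\aA$ under the chosen $Z_t$. Summing the wall-crossing contributions across all walls, exponentiating at the level of generating series, and invoking the known evaluation $\DT_{0}(X)=M(-x)^{\chi(X)}$ should yield $\DT(X)=\PT(X)\cdot \DT_{0}(X)$, i.e. $\DT'(X)=\PT(X)$.

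Third, for the relative statement $\DT'(X/Y)=\PT(X/Y)$ I would rerun the same argument restricted to classes with $f_{\ast}\beta=0$. This restriction is preserved at every wall because the destabilizing factors are zero-dimensional and therefore trivial in $N_{1}(X/Y)$; the same wall-crossing identity, read off on the $f_{\ast}\beta=0$ subseries, gives $\DT(X/Y)=\PT(X/Y)\cdot \DT_{0}(X)$, and the quotient formula follows at once. The main obstacle is the Joyce--Song wall-crossing machinery itself, which rests on the multiplicative Behrend function identities
\[
\nu(E_1 \oplus E_2) = (-1)^{\chi(E_1,E_2)}\,\nu(E_1)\,\nu(E_2)
\]
and their integrated version on the moduli stack of objects of $\dD_X$, for objects that are genuine complexes rather than coherent sheaves. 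Proving these identities in the non-sheaf setting is exactly the content attributed to~\cite{JS} and~\cite{BG}, and is the delicate technical heart of the argument.
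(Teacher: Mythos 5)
First, a point of comparison: this paper does not actually prove Theorem~\ref{mainTcurve} --- it is imported verbatim from \cite[Theorem~8.11]{Tcurve1}, so there is no in-text proof to measure your sketch against. Your overall plan --- realize both $I_n(X,\beta)$ and $P_n(X,\beta)$ as moduli of semistable objects in $\dD_X$ at two points of $\Stab_{\Gamma_{\bullet}}(\dD_X)$ with the same heart up to tilting, connect them by a path, and show that the wall contributions are governed by the invariants $N_{n,0}$ of zero-dimensional sheaves, which assemble into $M(-x)^{\chi(X)}$ --- is indeed the strategy of that reference and of the analogous arguments in Sections~4--5 here, and your identification of the Behrend-function identities of \cite{JS}, \cite{BG} as the technical core is accurate.

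There is, however, a genuine gap in the specific one-parameter family you propose. With the rank placed in the top subquotient of $\Gamma_{\bullet}$ and $Z(\oO_X)=z$ held fixed, a point class contributes $Z(\oO_x[-1])=-n$ with $n>0$, of phase $\pi$, \emph{independently of the polarization}. The unique wall separating the ideal-sheaf chamber from the stable-pair chamber is where $Z(\oO_x[-1])\in\mathbb{R}_{>0}Z(\oO_X)$, i.e.\ where $\arg z$ passes through $\pi$. Scaling the ample class by $t$ moves neither $\arg z$ nor the phase of the point classes, so your path from $t\gg 0$ to $t\to 0^{+}$ never crosses this wall: at every $t$ a zero-dimensional subobject destabilizes (phase $\pi>\arg z$) while a zero-dimensional quotient never does, so the rank-one semistable objects are stable pairs throughout (this is exactly what Proposition~\ref{prop:semi1} shows at large $t$), and an ideal sheaf $I_C$ with $C$ carrying embedded points is semistable nowhere on your path. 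The family that actually connects the two chambers rotates the phase of the degree-zero part of the central charge relative to $Z(\oO_X)$ --- compare the roles of $z_0$ and $z_1$ in the data (\ref{data2}) and Proposition~\ref{prop:semi2} --- which amounts to tilting $\aA_X$ at the subcategory generated by the $\oO_x[-1]$. Two smaller defects: your candidate heart $\langle \oO_X[-1], \Coh_{\le 1}(X)\rangle_{\ex}$ does not satisfy the orthogonality required by Proposition~\ref{t-str}, since $\Hom(\oO_X[-1],F)=H^1(X,F)$ need not vanish (the correct heart is $\aA_X=\langle \oO_X, \Coh_{\le 1}(X)[-1]\rangle_{\ex}$); and the assertion that destabilizing factors ``automatically'' lie in $\Coh_0(X)$ because rank and curve class ``cannot be split'' is a heuristic, not an argument --- controlling which classes occur at walls is precisely the content of Assumption~\ref{assum}, the good-path condition of Definition~\ref{def:good}, and the verifications in Proposition~\ref{prop:check}.
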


\begin{rmk}\label{rmk:BG}
The result of Theorem~\ref{mainTcurve}
is proved 
in the arXiv version of~\cite[Theorem~8.11]{Tcurve1}
under the assumption that 
Behrend-Getzler's announced result~\cite{BG}
is true. 
In Conjecture~\ref{conj:BG}, 
we formulate a required result as 
a derived category version of~\cite[Theorem~5.3]{JS}.
Since the paper~\cite{BG} has not 
yet appeared, the result of Theorem~\ref{mainTcurve}
is only proved for an Euler characteristic version
as in~\cite[Theorem~1.2]{Tcurve1} at this moment. 

Roughly speaking,
in proving DT/PT correspondence, 
 we need the result of~\cite{BG}
that locally the moduli space of semi-Schur objects 
in the derived category is the critical locus of 
some holomorphic function modulo the gauge action.
This result is needed in showing the 
derived category version of~\cite[Theorem~5.2]{JS},
that is the existence of the Lie algebra morphism 
from the Lie algebra of virtual 
indecomposable objects (cf.~\cite[Subsection~5.2]{Joy2}) in 
the Hall algebra of 
the heart of a t-structure in the derived category
to the Lie algebra defined by the Euler pairing on 
$K(X)$. In~\cite[Theorem~5.2]{JS}, 
Joyce-Song constructed the above Lie algebra morphism 
for the heart of a standard t-structure, (i.e. 
$\Coh(X)$,) by proving that the moduli 
space of objects $E\in \Coh(X)$ is written 
as a critical locus of some holomorphic 
function modulo gauge action. 
If we assume the result of~\cite{BG}, then 
the argument of~\cite{JS} is applied to 
show the derived category version of~\cite[Theorem~5.2]{JS}. 
The remaining argument is the same as in the Euler characteristic 
version. 
\end{rmk}

\subsection{Non-commutative Donaldson-Thomas theory}\label{subsec:Non}
Here we introduce (global) non-commutative 
DT invariants associated to 
an arbitrary flopping contraction $f\colon X\to Y$. 
Recall the definition of $\ppPPer(X/Y)$
in Definition~\ref{def:pp}. 
\begin{defi}\emph{
An object $I\in \ppPPer(X/Y)$ is called 
a \textit{perverse ideal sheaf} if 
there is an injection 
$I\hookrightarrow \oO_X$ in $\ppPPer(X/Y)$.}
\end{defi}
The moduli  
theory of perverse ideal sheaves
is studied by Bridgeland~\cite{Br1}. 
\begin{thm}\emph{\bf{\cite[Theorem~5.5]{Br1}}}
For $(n, \beta)\in \mathbb{Z}\oplus N_1(X)$, 
the functor of families of perverse ideal 
sheaves $I\in \ppPPer(X/Y)$ which fit into 
the exact sequence in $\ppPPer(X/Y)$, 
\begin{align}\label{PerF}
0 \lr I \lr \oO_X \lr F \lr 0,
\end{align}
satisfying 
\begin{align}\label{I(A)}
F\in \ppPPer_{\le 1}(X/Y), \quad [F]=\beta \quad \mbox{and} \quad 
\chi(F)=n,
\end{align}
is representable by a projective scheme 
$I_n(\pAA_Y, \beta)$. 
\end{thm}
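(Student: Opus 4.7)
The plan is to use the derived equivalence $\pPhi$ of Theorem~\ref{thm:nc} to translate the moduli problem into a Quot-type problem on the non-commutative scheme $(Y,\pAA_{Y})$, and then invoke Grothendieck's construction of the Quot scheme in the non-commutative setting.

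First I would check that the equivalence $\pPhi$ extends to relative derived equivalences over an arbitrary base scheme $T$, i.e.\ that it is of Fourier--Mukai type with kernel $\pE$ viewed as an object on $X\times Y$. This relativization is standard for tilting equivalences: the vector bundle $\pE$ from the proof of Theorem~\ref{thm:nc} is a relative tilting object, so $\pPhi_T=\dR(f\times\id_T)_{\ast}\dR\HOM(\pE\boxtimes\oO_T,\ast)$ gives an equivalence between $D^b(\Coh(X\times T))$ and $D^b(\Coh(\pAA_Y\boxtimes\oO_T))$, and it preserves the $T$-flat families of objects in the hearts $\ppPPer(X/Y)$ and $\Coh(\pAA_Y)$ respectively. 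Under this equivalence, a flat family of exact sequences (\ref{PerF}) in $\ppPPer(X/Y)$ parametrised by $T$ corresponds bijectively to a flat family of short exact sequences
\begin{align*}
0\lr \pPhi(I) \lr \pAA_{Y}'\boxtimes \oO_T \lr \pPhi(F) \lr 0
\end{align*}
in $\Coh(\pAA_{Y}\boxtimes\oO_T)$, where $\pAA_Y'=\pPhi(\oO_X)$ as in (\ref{locpro}).

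Next I would identify the target moduli functor with a Quot functor. Under $\pPhi$, the conditions (\ref{I(A)}) translate to a numerical condition on $\pPhi(F)\in\Coh_{\le 1}(\pAA_Y)$ fixed by $(n,\beta)$, and specifying a perverse ideal sheaf $I\hookrightarrow\oO_X$ is equivalent to specifying a surjection $\pAA_Y'\twoheadrightarrow \pPhi(F)$ in $\Coh(\pAA_Y)$. Hence the functor in question is isomorphic to the Quot functor $\Quot_{\pAA_Y/Y}(\pAA_Y',n,\beta)$ parametrising quotients of $\pAA_Y'$ in $\Coh(\pAA_Y)$ with fixed numerical class determined by $(n,\beta)$.

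Finally, representability and projectivity follow from Grothendieck's construction of the Quot scheme, extended to the setting of coherent modules over a coherent sheaf of (possibly non-commutative) $\oO_Y$-algebras on the projective scheme $Y$: the Quot scheme is constructed as a closed subscheme of a Grassmannian of quotients of a sufficiently twisted push-forward, using the boundedness of the family of quotients with fixed Hilbert polynomial. In our case the numerical invariants are $(n,\beta)$, and boundedness of $1$-dimensional quotients of $\pAA_Y'$ with fixed $[\pPhi(F)]=\beta$ and $\chi=n$ follows from boundedness of $\Coh_{\le 1}(\pAA_Y)$-objects with fixed Hilbert polynomial on the projective scheme $Y$. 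The resulting projective scheme is the desired $I_n(\pAA_Y,\beta)$.

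The main obstacle I expect is the careful verification of the relative form of the derived equivalence $\pPhi$: although the construction of $\pE$ in Theorem~\ref{thm:nc} only yields an absolute equivalence, one must check that the tilting bundle $\pE$ is in fact a relative tilting object over $Y$ so that flat families of objects in $\ppPPer(X/Y)$ are sent to flat families of $\pAA_Y$-modules and vice versa. Once this is settled the remaining representability is a direct invocation of the Quot scheme formalism, and projectivity follows from the standard valuative criterion together with the fact that the condition ``$F\in \ppPPer_{\le 1}(X/Y)$'' is closed under flat specialisation via the equivalence.
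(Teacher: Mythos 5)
This statement is quoted from Bridgeland's paper \cite{Br1} and is not proved in the text: the author only adds the remark that Bridgeland's construction covers $p=-1$ and that $p=0$ is obtained by passing to the flop via the equivalence (\ref{standard}). Your proposal therefore supplies an argument where the paper supplies a citation, and the route you take --- transporting the problem through $\pPhi$ to a Quot functor for quotients of $\pAA_Y'$ in $\Coh(\pAA_Y)$ and invoking the projective Quot scheme for modules over a coherent sheaf of $\oO_Y$-algebras --- is sound and is exactly the interpretation the paper itself records in Remark~\ref{rmk:corre}. It differs from Bridgeland's original proof, which works directly with families of perverse sheaves on $X$ (and predates Van den Bergh's tilting description), and it has the advantage of treating $p=0$ and $p=-1$ uniformly. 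Two points deserve more care than you give them. First, the identification of the two moduli functors requires matching Bridgeland's notion of a flat family of perverse ideal sheaves over $T$ with $T$-flat families of $\pAA_Y$-modules; this is exactly the ``relative tilting'' issue you flag, and it is the real content, so it should be carried out rather than asserted. Second, fixing $(n,\beta)\in\mathbb{Z}\oplus N_1(X)$ is strictly finer than fixing the Hilbert polynomial of $\pPhi(F)$ on $Y$ (the latter only sees $f_{\ast}\beta$ and $\chi(\pE^{'\vee}\otimes F)$), so $I_n(\pAA_Y,\beta)$ is an open and closed subscheme of the non-commutative Quot scheme with the induced Hilbert polynomial, cut out by the locally constant class $\beta$; this still yields projectivity, and it also makes your final appeal to a valuative criterion unnecessary. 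With these points filled in, the argument is a correct alternative to the cited proof.
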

\begin{rmk}
In~\cite[Theorem~5.5]{Br1}, Bridgeland 
constructs the moduli space $I_n(\pAA_Y, \beta)$
only in the case of $p=-1$. 
However the case of $p=0$ is reduced to the case of 
$p=-1$ by passing to the flop via the equivalence (\ref{standard}). 
\end{rmk}
\begin{rmk}\label{rmk:corre}
The object $F\in \ppPPer_{\le 1}(X/Y)$
in the sequence (\ref{PerF}) corresponds to an $\pAA_Y$-module 
$F'$ which admits surjections, 
\begin{align}\label{cyclic}
\pAA_{Y} \twoheadrightarrow \pAA_{Y}' \twoheadrightarrow F', 
\end{align}
in $\Coh(\pAA_Y)$ via the equivalence (\ref{restper}). 
In this way, 
 $I_n(\pAA_Y, \beta)$ is also interpreted as
a moduli space of cyclic $\pAA_Y$-modules
of a given numerical type. 
\end{rmk}
By~\cite{HT2}, there is a symmetric perfect obstruction theory on 
$I_n(\pAA_Y, \beta)$, and the associated virtual fundamental cycle, 
$$[I_n(\pAA_Y, \beta)]^{\rm{vir}} \in A_0(I_n(\pAA_Y, \beta)).$$
\begin{defi}\emph{
The \textit{(global) non-commutative Donaldson-Thomas invariant}
$\pAA_{n, \beta}$ is defined by 
$$\pAA_{n, \beta}=\int_{[I_n(\pAA_Y, \beta)]^{\rm{vir}}}1 \in \mathbb{Z}.$$}
\end{defi}
The generating series are defined as follows. 
\begin{defi}\label{def:glncDT}
\emph{
We define the generating series ${}{\DT}(\pAA_Y)$
and ${}{\DT}_0(\pAA_Y)$ to be }
\begin{align*}
{}{\DT}(\pAA_Y)&=\sum_{n, \beta}\pAA_{n, \beta}x^n y^{\beta}, \\
{}{\DT}_{0}(\pAA_Y)&=\sum_{n, f_{\ast}\beta=0}
\pAA_{n, \beta}x^n y^{\beta}.
\end{align*}
\end{defi}
\begin{rmk}
If $f\colon X\to Y$ contracts only single rational 
curve $C\subset X$ with normal bundle 
$N_{C/X}=\mathcal{O}_{C}(-1)^{\oplus 2}$, then 
the series $\DT_{0}(\pAA_Y)$ coincides with the one 
introduced by Szendr{\H o}i~\cite{Sz}
by Remark~\ref{rmk:corre}.
\end{rmk}

\section{Weak stability conditions on 
$\dD_X$}\label{sec:weak}
In what follows, we use the notation introduced
in the previous section. 
Let $X$ be a smooth projective Calabi-Yau 
3-fold with a flopping contraction
$f\colon X\to Y$. (cf.~Definition~\ref{def:flopping}.)
In this section, we study the space of weak stability 
conditions on the triangulated subcategory, 
$$\dD_X\cneq \langle \oO_X, \Coh_{\le 1}(X) \rangle_{\tr}
\subset D^b(\Coh(X)).$$
We set $\Gamma$ to be 
$$\Gamma=\mathbb{Z}\oplus N_1(X) \oplus \mathbb{Z}, $$
and a group homomorphism $\cl \colon K(\dD_X) \to \Gamma$
to be 
$$\cl(E)=(\ch_3(E), \ch_2(E), \ch_0(E)).$$
By the definition of $\dD_X$, it is obvious that 
$\ch_{\bullet}(E)$ has integer coefficients 
for $E\in \dD_X$, thus $\cl$ is well-defined. 
\begin{rmk}
Let $I_C \subset \oO_X$ be an ideal sheaf of
a 1-dimensional subscheme $C\subset X$. 
We have $I_C \in \dD_X$, and 
$$\cl(I_C)=(-n, -\beta, 1) \mbox{ if and only if }
[C]=\beta, \chi(\oO_C)=n, $$
by Riemann-Roch theorem. 
The similar statement also holds for 
stable pairs (\ref{twoterm}) and perverse ideal sheaves. 
\end{rmk}
We denote by $\rk$ the projection onto the third
factor, 
$$\rk \colon \Gamma\ni (s, l, r) \mapsto r \in \mathbb{Z}.$$
We set 
\begin{align*}
\Gamma_0 &=\mathbb{Z} \oplus N_1(X/Y), \\
\Gamma_1 &=\mathbb{Z} \oplus N_1(X).
\end{align*}
We have the filtration, 
\begin{align}\label{filt2}
\Gamma_0 \stackrel{i}{\hookrightarrow} \Gamma_1 
\stackrel{j}{\hookrightarrow} \Gamma_2=\Gamma, 
\end{align}
via $i(s, l)=(s, l)$ and $j(s, l)=(s, l, 0)$. 
Each subquotient $\mathbb{H}_i=\Gamma_i/\Gamma_{i-1}$
is 
$$\mathbb{H}_0=\mathbb{Z}\oplus N_1(X/Y), \quad 
\mathbb{H}_1=N_1(Y), \quad \mathbb{H}_2=\mathbb{Z},$$
and there is a natural isomorphism, 
\begin{align}\label{natiso}
\left(
\mathbb{C}\times N^1(X/Y)_{\mathbb{C}} \right)
\times N^1(Y)_{{\mathbb{C} }} \times \mathbb{C}
\stackrel{\sim}{\lr}
\prod_{i=0}^{2}\mathbb{H}_i^{\vee}.
\end{align}
Hence we have the local 
homeomorphism by Theorem~\ref{thm:stab},  
\begin{align}\label{loc:hom}
\Pi \colon 
\Stab_{\Gamma_{\bullet}}(\dD_X) \to
\left(
\mathbb{C}\times N^1(X/Y)_{\mathbb{C}} \right)
\times N^1(Y)_{\mathbb{C}} \times \mathbb{C}.
\end{align}
The goal of this section is to construct
a certain region, 
\begin{align*}
\uU_X \subset \Stab_{\Gamma_{\bullet}}(\dD_X), 
\end{align*}
and show that the above region 
can be patched by the derived equivalence under 
a flop.

\subsection{t-structures on $\dD_X$}
In this subsection, we construct
t-structures on $\dD_X$. 
The notation used here 
is introduced in subsection~\ref{subsec:der}. 
\begin{lem}\label{lem:AB}
(i) There is the heart of a bounded t-structure
$\aA_X \subset \dD_X$, written as 
\begin{align}\label{writeA}
\aA_X=\langle \oO_X, \Coh_{\le 1}(X)[-1]\rangle_{\ex}.
\end{align}
(ii) There are hearts of bounded t-structures 
$\pB_{X/Y}\subset \dD_{X}$ for $p=0, -1$, written as 
\begin{align}\label{pB}
\pB_{X/Y}=\langle \oO_X, \ppPPer_{\le 1}(X/Y)[-1]\rangle_{\ex}.
\end{align}
\end{lem}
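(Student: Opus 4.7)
The plan is to apply the standard criterion (cf.~\cite[Lemma~3.2]{Brs1}) that a full additive subcategory $\aA$ of a triangulated category $\dD$ is the heart of a bounded t-structure if and only if, first, $\Hom_{\dD}(A_1, A_2) = 0$ whenever $A_j \in \aA[k_j]$ with $k_1 > k_2$, and, second, every nonzero $E \in \dD$ admits a finite filtration with factors $A_i \in \aA[k_i]$ in strictly decreasing shifts $k_1 > k_2 > \cdots > k_n$. Since $\aA_X$ and $\pB_{X/Y}$ are defined as extension closures $\langle S\rangle_{\ex}$ within $\dD_X$, verifying these two conditions will establish both parts of the lemma.

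For part (i), the Hom-vanishing condition reduces by a standard long-exact-sequence induction on filtration lengths to the case where both $A_1$ and $A_2$ are generators of $\aA_X$. The four resulting pairings yield $H^k(\oO_X)$, $H^{k-1}(F)$, $\Ext^{k+1}(F, \oO_X)$ and $\Ext^k(F,G)$ with $F, G \in \Coh_{\le 1}(X)$, each vanishing in the required range either automatically (negative-degree Ext between sheaves) or because a torsion sheaf admits no morphism to $\oO_X$. The filtration condition follows from the triangulated generation $\dD_X = \langle \oO_X, \Coh_{\le 1}(X)\rangle_{\tr}$: any $E \in \dD_X$ is an iterated cone of shifts of $\oO_X \in \aA_X$ and of objects of $\Coh_{\le 1}(X) \subset \aA_X[1]$, which assemble into a filtration that one then reorganises into strictly decreasing shifts using the Hom vanishings already established.

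For part (ii), the argument is parallel, replacing $\Coh_{\le 1}(X)$ by $\ppPPer_{\le 1}(X/Y)$ throughout. Hom vanishing between the generators $\oO_X$ and $\ppPPer_{\le 1}(X/Y)[-1]$ now follows from the fact that both $\oO_X$ and $\ppPPer_{\le 1}(X/Y)$ lie in the heart $\ppPPer(X/Y)$, except for the case $\Hom(F, \oO_X)$ with $F \in \ppPPer_{\le 1}(X/Y)$. For this I would apply Serre duality (using that $K_X$ is trivial) to rewrite the group as $H^3(X, F)^{\vee}$, then invoke Lemma~\ref{lem:straight} to express $F$ as an iterated extension of objects whose cohomology sheaves lie in $\Coh_{\le 1}(X)$ and are concentrated in cohomological degrees $-1$ and $0$; the spectral sequence $H^i(X, \hH^j(F)) \Rightarrow H^{i+j}(X, F)$ then forces $H^3(X, F) = 0$. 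Generation of $\dD_X$ by $\{\oO_X\} \cup \ppPPer_{\le 1}(X/Y)$ follows from the generation by $\{\oO_X\} \cup \Coh_{\le 1}(X)$ because $\ppPPer_{\le 1}(X/Y)$ and $\Coh_{\le 1}(X)$ are both hearts of bounded t-structures on $D^b(\Coh_{\le 1}(X))$, hence triangulated-generate the same subcategory. The only step requiring genuine input beyond bookkeeping is this Serre-duality Hom-vanishing in part (ii).
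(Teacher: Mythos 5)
Your reduction to the Bridgeland heart criterion is fine as far as the Hom--vanishing computations go (and your Serre-duality/spectral-sequence argument for $H^{\ge 2}(X,F)=0$, $F\in\ppPPer_{\le 1}(X/Y)$, is essentially the computation the paper does via adjunction to $Y$, using $\dR f_{\ast}F\in\Coh(Y)$ with at most $1$-dimensional support). The genuine gap is the second half of the criterion: the claim that an arbitrary Postnikov tower coming from the triangulated generation of $\dD_X$ can be ``reorganised into strictly decreasing shifts using the Hom vanishings already established.'' The vanishings you have are in strictly negative degrees, but the obstruction to swapping two adjacent factors $A_1[k]$ (sub) and $A_2[k+1]$ (quotient) in a tower is the connecting map $A_2[k+1]\to A_1[k+1]$, i.e.\ a class in $\Hom(A_2,A_1)$ in degree \emph{zero}, which has no reason to vanish. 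When it does not vanish, the only way to proceed is to replace the pair of factors by the kernel and cokernel of $A_2\to A_1$ --- and for that you need the generators to sit inside an ambient abelian category closed under the relevant kernels and cokernels. That is exactly the structure your outline never produces, and it is the crux of the lemma rather than bookkeeping: for instance, already the cone of a map $F[-1]\to G[-1]$ with $F,G\in\Coh_{\le 1}(X)$ is handled only by passing to $\ker$ and $\mathrm{coker}$ in $\Coh_{\le 1}(X)$.

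The paper supplies the missing structure by a different route. Part (i) is quoted from \cite[Lemma~3.5]{Tcurve1}. For part (ii) it embeds the candidate heart into the ambient heart $\ppPPer^{\dag}(X/Y)[-1]\subset D^b(\Coh(X))$ (the shift of the tilt of $\ppPPer(X/Y)$ at the torsion pair $(\ppPPer_{\le 1},\ppPPer_{\ge 2})$), checks that $\oO_X$ lies in $\ppPPer_{\ge 2}(X/Y)\subset\ppPPer^{\dag}(X/Y)[-1]$ together with the degree-zero vanishings $\Hom(\oO_X,F)=\Hom(F,\oO_X)=0$ for $F\in\ppPPer_{\le 1}(X/Y)[-1]$, and then invokes Proposition~\ref{t-str}, whose hypotheses (in particular that $\aA'=\ppPPer_{\le 1}(X/Y)[-1]$ is closed under subobjects and quotients in the ambient heart) are precisely what legitimises the reorganisation step. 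To repair your proof you would either have to reprove Proposition~\ref{t-str} or exhibit such an ambient heart yourself; note also that your criterion, which only asks for vanishing in negative degrees, is weaker than what the filtration argument actually consumes --- the degree-zero vanishings between $\oO_X$ and the rank-zero generators are used in an essential way.
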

\begin{proof}
(i) is proved in~\cite[Lemma~3.5]{Tcurve1}, so we prove (ii). 
Let us consider the heart of a bounded t-structure 
$\ppPPer^{\dag}(X/Y)\subset D^b(\Coh(X))$, 
given in Definition~\ref{def:perdag}.
 By the construction, it is obvious that 
$$\ppPPer^{\dag}(X/Y)[-1] \cap D^b(\Coh_{\le 1}(X))=
\ppPPer_{\le 1}(X/Y)[-1].$$
 Take $F\in \ppPPer_{\le 1}(X/Y)$. Since
 $\oO_X \in \ppPPer(X/Y)$, we have 
 $\Hom(\oO_X, F[-1])=0$. Also we have 
 \begin{align*}
 \Hom(F[i], \oO_X) &\cong \Hom(\oO_X, F[3+i])^{\vee} \\
 &\cong \Hom(\oO_Y, \dR f_{\ast}F[3+i])^{\vee} \\
 &= 0,
 \end{align*}
 for $i\ge -1$. 
 Here the first isomorphism follows from the Serre duality, 
 the second one is an adjunction, and the last 
one is a consequence of $\dR f_{\ast}F \in \Coh(Y)$
by the definition of $\ppPPer(X/Y)$. 
In particular, we have 
$$\oO_X \in \ppPPer_{\ge 2}(X/Y) \subset \ppPPer^{\dag}(X/Y)[-1].$$
 Applying Proposition~\ref{t-str} below 
 by setting $\dD=D^b(\Coh(X))$, $\dD'=D^b(\Coh_{\le 1}(X))$,
 $\aA=\ppPPer^{\dag}(X/Y)[-1]$
 and $E=\oO_X$, we obtain the result.  
\end{proof}
We have used the following proposition, 
which is proved in~\cite[Proposition~3.6]{Tcurve1}. 
\begin{prop}\emph{\bf{\cite[Proposition~3.6]{Tcurve1}}}
\label{t-str}
Let $\dD$ be a $\mathbb{C}$-linear triangulated category 
and $\aA \subset \dD$ the heart of a bounded t-structure on $\dD$. 
Take $E\in \aA$ with
$\End(E)=\mathbb{C}$ and a full triangulated subcategory
 $\dD'\subset \dD$, which satisfy 
 the following conditions. 
\begin{itemize}
 \item The category $\aA'\cneq \aA\cap \dD'$
is the heart of a bounded t-structure on $\dD'$, which 
is closed under subobjects and quotients in the 
abelian category $\aA$. 
\item For any object $F\in \aA'$, we have 
\begin{align}\label{cond}
\Hom(E, F)=\Hom(F, E)=0.
\end{align}
\end{itemize}
Let $\dD_E$ be the triangulated category, 
$$\dD_E \cneq \langle E, \dD' \rangle_{\tr} \subset \dD.$$
Then $\aA_E\cneq \dD_E \cap \aA$ is the heart 
of a bounded t-structure on $\dD_E$, 
which satisfies 
$$\aA_E=\langle E, \aA' \rangle_{\ex}.$$
\end{prop}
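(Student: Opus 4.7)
Set $\bar{\aA}:=\langle E,\aA'\rangle_{\ex}$. The inclusion $\bar{\aA}\subseteq \aA_E$ is immediate, since $\aA$ is extension closed in $\dD$ and both $E$ and the objects of $\aA'$ lie in $\aA\cap\dD_E$. The key structural claim is that $\bar{\aA}$ is closed under subobjects and quotients inside the ambient abelian category $\aA$; once that is in hand, both the t-structure property on $\dD_E$ and the reverse inclusion $\aA_E\subseteq\bar{\aA}$ will follow by a cohomological induction. So the plan is to prove this closure first, then transport it to $\dD_E$ via the $\aA$-cohomology functors.

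For the closure statement, the plan is to take $X\in\bar{\aA}$ with a finite filtration $0=X_0\subset X_1\subset\cdots\subset X_n=X$ in $\aA$ whose successive quotients are each either isomorphic to $E$ or lie in $\aA'$. Given a subobject $Y\hookrightarrow X$ in $\aA$, the induced filtration $Y_i:=Y\cap X_i$ has graded pieces embedding into $X_i/X_{i-1}$. Whenever $X_i/X_{i-1}\in\aA'$, closure of $\aA'$ under subobjects gives $Y_i/Y_{i-1}\in\aA'\subset\bar{\aA}$; the delicate case is $X_i/X_{i-1}=E$, where one exploits the complete orthogonality $\Hom(\aA',E)=\Hom(E,\aA')=0$ together with $\End(E)=\mathbb{C}$ to conclude $Y_i/Y_{i-1}\in\bar{\aA}$. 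A symmetric argument using the dual filtration by quotients handles closure under quotients.

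With closure secured, the next step is to show by induction on the length of a presentation of an object $X\in\dD_E$ as iterated cones over shifts of $E$ and objects of $\dD'$ that each $\hH^i_{\aA}(X)$ lies in $\bar{\aA}$. The base cases are immediate, since $\aA'$ is the heart of the restricted t-structure on $\dD'$ and $E\in\aA$. For the inductive step, a distinguished triangle $Y\to X\to Z\to Y[1]$ in $\dD_E$ yields a long exact cohomology sequence in $\aA$ expressing $\hH^i_{\aA}(X)$ as an extension of a subobject of $\hH^i_{\aA}(Z)$ by a quotient of $\hH^i_{\aA}(Y)$, both of which remain in $\bar{\aA}$ by the closure property. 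Consequently, the standard truncation triangles of the t-structure on $\dD$ restrict to $\dD_E$, realising $\bar{\aA}$ as the heart of a bounded t-structure on $\dD_E$. The equality $\aA_E=\bar{\aA}$ is then automatic: any $X\in\aA_E$ is concentrated in degree zero for this t-structure, so $X=\hH^0_{\aA}(X)\in\bar{\aA}$.

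The main obstacle is the subobject analysis around $E$ in the second step: since $\End(E)=\mathbb{C}$ does not make $E$ simple in $\aA$ and one cannot in general describe all subobjects of $E$ explicitly, closure under subobjects must be wrung out of the complete orthogonality between $E$ and $\aA'$ rather than from any intrinsic simplicity of $E$. This is where the strength of the assumption $\Hom(E,F)=\Hom(F,E)=0$ for all $F\in\aA'$ is genuinely used, and the care taken there propagates through the rest of the argument.
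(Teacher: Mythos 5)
The paper itself does not prove this proposition; it is quoted verbatim from \cite[Proposition~3.6]{Tcurve1}, so there is no in-text proof to compare against. Judged on its own terms, your overall architecture (induction on the number of cones needed to build an object of $\dD_E$ out of shifts of $E$ and objects of $\dD'$, fed by a closure lemma through the long exact cohomology sequences) is the right one, but your key structural claim is false: $\bar{\aA}=\langle E,\aA'\rangle_{\ex}$ is \emph{not} closed under subobjects or quotients in $\aA$. Already in the application this paper makes of the proposition (Lemma~\ref{lem:AB}, with $\aA=\oPPer^{\dag}(X/Y)[-1]$, $E=\oO_X$, $\aA'=\oPPer_{\le 1}(X/Y)[-1]$) there are counterexamples: take a smooth very ample divisor $Z=f^{-1}(Z_Y)\subset X$ disjoint from the exceptional locus; then $\oO_X(-Z)$, $\oO_X$ and $\oO_Z$ all lie in $\oPPer_{\ge 2}(X/Y)\subset\aA$, so $0\to\oO_X(-Z)\to\oO_X\to\oO_Z\to 0$ is a short exact sequence in $\aA$ exhibiting a subobject and a quotient of $E$ that are not in $\bar{\aA}$ (they have $\ch_1\neq 0$, whereas every object of $\bar{\aA}$ has $\ch_1=0$). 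Your treatment of the ``delicate case'' $X_i/X_{i-1}\cong E$ cannot be repaired: a subobject of $E$ in $\aA$ is an arbitrary object of $\aA$, on which the hypotheses $\Hom(E,\aA')=\Hom(\aA',E)=0$ impose no constraint at all.

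What the induction actually requires, and what is true, is the weaker statement that kernels and cokernels of \emph{morphisms between two objects of} $\bar{\aA}$ again lie in $\bar{\aA}$: in the long exact sequence, the subobject of $\hH^i_{\aA}(Z)$ you need is the kernel of the connecting map into $\hH^{i+1}_{\aA}(Y)\in\bar{\aA}$, and the quotient of $\hH^i_{\aA}(Y)$ is the cokernel of a map from $\hH^{i-1}_{\aA}(Z)\in\bar{\aA}$. Proving this weaker closure is where $\End(E)=\mathbb{C}$ and the two-sided orthogonality genuinely enter: by d\'evissage on filtration length one shows that any nonzero morphism $E\to W$ (resp.\ $W\to E$) with $W\in\bar{\aA}$ is a monomorphism with cokernel in $\bar{\aA}$ (resp.\ an epimorphism with kernel in $\bar{\aA}$), and from this one deduces closure under kernels and cokernels of maps inside $\bar{\aA}$. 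As written, your argument substitutes a false lemma for this one, so the inductive step is not established.
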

\begin{rmk}\label{rmk:B}
By Lemma~\ref{lem:straight} and (\ref{pB}), 
the abelian categories $\pB_{X/Y}$ are written as 
\begin{align}\label{per:gen10}
\oB_{X/Y}&=
\langle \oO_X, 
\omega_{f^{-1}(y)}, \oO_{C_i}(-1)[-1], \widetilde{\Coh}_{\le 1}(X)[-1]
\rangle_{\ex}, \\ 
\label{per:gen20}
\iB_{X/Y}&=
\langle \oO_X, \oO_{f^{-1}(y)}[-1], \oO_{C_i}(-1), 
\widetilde{\Coh}_{\le 1}(X)[-1]
\rangle_{\ex}. 
\end{align}
\end{rmk}
We have the following lemma, 
whose proof will be given in Section~\ref{sec:tech}. 
\begin{lem}\label{lem:noether}
(i) The abelian categories $\aA_X$, $\pB_{X/Y}$ are noetherian.

(ii)  Any infinite chain of monomorphisms in $\aA_X$, 
(resp.~$\pB_{X/Y}$,)
\begin{align}\label{stmo2}
E_0 \hookleftarrow E_1 \hookleftarrow \cdots E_j
\hookleftarrow E_{j+1} \hookleftarrow \cdots, 
\end{align}
with $E_j/E_{j+1}\notin \Coh_{0}(X)[-1]$, 
(resp.~$E_j/E_{j+1}\notin \ppPPer_{0}(X/Y)[-1]$,) terminates. 
\end{lem}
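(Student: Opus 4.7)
My plan is to reduce both (i) and (ii) to classical chain conditions on $\Coh_{\le 1}(X)$, respectively on $\Coh_{\le 1}(\pAA_Y)\cong\ppPPer_{\le 1}(X/Y)$ via Theorem~\ref{thm:nc}. The first common step is rank stabilization: $\ch_0$ is a non-negative additive invariant on $\aA_X$ and $\pB_{X/Y}$ (being $1$ on $\oO_X$ and $0$ on the shifted generators), so along any monotone chain it becomes constant after finitely many terms. From that point on every successive quotient has rank zero and hence lies in $\Coh_{\le 1}(X)[-1]$, resp.~$\ppPPer_{\le 1}(X/Y)[-1]$, and one can write the iterated accumulated quotient $E_{N+j}/E_N$ in case (i) or $E_N/E_{N+j}$ in case (ii) in the form $K_j[-1]$ with $K_j\in\Coh_{\le 1}(X)$, resp.~in $\ppPPer_{\le 1}(X/Y)$.

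The second step is to bound the $K_j$'s by a fixed sheaf. In the descending case (ii), by expanding the $\aA_X$-surjection $E_N\twoheadrightarrow K_j[-1]$ in $D^b(\Coh(X))$ through the standard (resp.~perverse) truncation triangle of $E_N$ and using the basic vanishing $\Hom_{D^b}(A,B[-1])=0$ between any two objects $A,B$ of a common heart of a $t$-structure, one identifies this $D^b$-morphism with a morphism of sheaves $H^{1}(E_N)\to K_j$, and checks via the resulting cohomology long exact sequence that it is surjective. A parallel computation in the ascending case (i), applied to the ambient quotient $E/E_N$, produces an ascending chain of corresponding sheaves sitting inside a fixed $H^{1}$-object of $\Coh_{\le 1}(X)$ (resp.~of $\Coh_{\le 1}(\pAA_Y)$).

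For (i), such ascending chains of subsheaves of a fixed object terminate by noetherianity of $\Coh(X)$ and, in the perverse case, of $\Coh(\pAA_Y)$ (invoking Theorem~\ref{thm:nc}). For (ii), the descending chain of kernels $\ker(H^{1}(E_N)\twoheadrightarrow K_j)$ inside the fixed sheaf $H^{1}(E_N)$ has, under the hypothesis that each $E_j/E_{j+1}$ avoids $\Coh_0(X)[-1]$ (resp.~$\ppPPer_0(X/Y)[-1]$), strictly nonzero one-dimensional successive quotients. This produces a strictly descending sequence of effective one-cycle classes $[\ker]\in N_1(X)$ (resp.~$f_{\ast}[\ker]\in N_1(Y)$, using that the perversely zero-dimensional objects are precisely those contracted by $f$) bounded below by zero. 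The main obstacle is this last numerical step, which I would settle by invoking that for a projective variety the effective monoid inside the finitely generated abelian group $N_1(-)$ is pointed, so the induced partial order is well-founded and any such strictly decreasing chain of effective classes must terminate, contradicting the assumed infinitude.
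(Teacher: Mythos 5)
Your part (ii) is essentially sound: after rank stabilization the surjection $E_N\twoheadrightarrow K_j[-1]$ does induce a surjection $\hH^1(E_N)\twoheadrightarrow K_j$ (the obstruction lies in $\hH^2(E_{N+j})=0$), and the termination of the resulting descending chain of kernels with non-$0$-dimensional successive quotients follows from the well-foundedness of the effective order on $N_1(X)$, resp.\ on $N_1(Y)$ after applying $\dR f_{\ast}$. The paper gets the $\aA_X$ case more directly from the non-negative additive functions $\ch_0$ and $-\omega\cdot\ch_2$ on the heart, but your route is equivalent.

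Part (i), however, has a genuine gap. The ``parallel computation'' in the ascending case does not place the $K_j$ inside a fixed $\hH^1$-object: for a monomorphism $K_j[-1]\hookrightarrow E/E_N$ in the heart, the long exact sequence of (standard, resp.\ perverse) cohomology reads
$$0\lr \hH^0(E/E_N)\lr \hH^0(E/E_{N+j})\lr K_j \lr \hH^1(E/E_N)\lr\cdots,$$
so $K_j\to\hH^1(E/E_N)$ is injective only if the degree-zero parts do not grow, and they can grow whenever the stabilized rank of the $E_j$ is smaller than $\rk E$ (rank stabilization does not exclude this). Concretely, rotating $I_C\to\oO_X\to\oO_C$ exhibits $\oO_C[-1]$ as a subobject of $I_C$ in $\aA_X$ with quotient $\oO_X$; here $K_1=\oO_C\neq 0$ while $\hH^1(I_C)=0$. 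Controlling the growth of the $\hH^0$'s is in fact the crux of the noetherianity proof: the paper first stabilizes $\hH^1(E_i)$ and $\dR f_{\ast}E_i$ (the latter in the noetherian category $\langle\oO_Y,\Coh_{\le 1}(Y)[-1]\rangle_{\ex}$), deduces that the kernels are iterated extensions of the $\oO_{C_k}(-1)[-1]$, and then traps the ascending chain $\hH^0(E_0)\subset\hH^0(E_i)$ inside the reflexive hull $\hH^0(E_0)^{\vee\vee}$, using that the quotients are supported in codimension two. Without a substitute for this step, your reduction of (i) to chain conditions in $\Coh_{\le 1}(X)$ and $\Coh_{\le 1}(\pAA_Y)$ does not go through.
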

Let us see that $\pB_{X/Y}$ is obtained 
from $\aA_{X}$ via tilting. 
Let $\pF$ for $p=0, -1$ be 
\begin{align*}
\oF&\cneq \{ F\in \Coh_{\le 1}(X) \mid f_{\ast}F=0, 
\ \Hom(\cC, F)=0\}, \\
\iF &\cneq \{ F\in \Coh_{\le 1}(X) \mid f_{\ast}F=0\}.
\end{align*}
(See Definition~\ref{def:pp} for $\cC\subset \Coh(X)$.)
Then $\pF$ fit into torsion 
pairs 
\begin{align}\label{pTpF}
(\pT, \pF), 
\end{align} on $\Coh_{\le 1}(X)$
such that 
$\ppPPer_{\le 1}(X/Y)$ is the associated 
tilting. (cf.~\cite[Section~3]{MVB}.)
Also note that 
$\Hom(E, F)=0$ for $E \in \Coh_0(X)[-1]$
and $F \in \pF[-1]$, which follows
from the definition of 
$\pF$.
Therefore by Lemma~\ref{lem:later} and 
Lemma~\ref{lem:noether}, 
the subcategories $\pF[-1]\subset \aA_{X}$ also
fit into torsion pairs on $\aA_{X}$, denoted by 
\begin{align}\label{pT}
(\pT', \pF[-1]).
\end{align}
\begin{lem}\label{lem:tiltAB}
The abelian category $\pB_{X/Y}$ is the tilting 
with respect to $(\pT', \pF[-1])$, i.e. 
\begin{align}\label{tilt}
\pB_{X/Y} = \langle \pF, \pT' \rangle_{\ex}.
\end{align}
\end{lem}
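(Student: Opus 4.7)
The plan is to prove the identity $\pB_{X/Y} = \langle \pF, \pT' \rangle_{\ex}$ by first establishing the inclusion $\pB_{X/Y} \subseteq \langle \pF, \pT' \rangle_{\ex}$ directly on generators, and then invoking the elementary fact that two hearts of bounded t-structures on a triangulated category that satisfy one inclusion must coincide. Both $\pB_{X/Y}$ (by Lemma~\ref{lem:AB}) and $\langle \pF, \pT' \rangle_{\ex}$ (by the Happel--Reiten--Smal{\o} tilting construction applied to the torsion pair $(\pT', \pF[-1])$ on $\aA_X$ from (\ref{pT})) are hearts of bounded t-structures on $\dD_X$.

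First I would rewrite $\pB_{X/Y}$ using $\ppPPer_{\le 1}(X/Y) = \langle \pF[1], \pT \rangle_{\ex}$ as
\[
\pB_{X/Y} = \langle \oO_X, \ppPPer_{\le 1}(X/Y)[-1] \rangle_{\ex} = \langle \oO_X, \pF, \pT[-1] \rangle_{\ex},
\]
and verify that each generator lies in $\langle \pF, \pT' \rangle_{\ex}$. The containment $\pF \subseteq \langle \pF, \pT' \rangle_{\ex}$ is immediate. For $\oO_X \in \pT'$: given $F \in \pF$, one has $\Hom(\oO_X, F[-1]) = H^{-1}(X, F) = 0$, since $F$ is a coherent sheaf concentrated in degree $0$. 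For $\pT[-1] \subseteq \pT'$: given $T \in \pT$ and $F \in \pF$, $\Hom(T[-1], F[-1]) = \Hom(T, F) = 0$ by the defining property of the torsion pair $(\pT, \pF)$ on $\Coh_{\le 1}(X)$. By extension-closure of $\langle \pF, \pT' \rangle_{\ex}$, one concludes $\pB_{X/Y} \subseteq \langle \pF, \pT' \rangle_{\ex}$.

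The key step, which is also the main technical obstacle, is deducing equality from this one-sided inclusion. Let $(\dD_i^{\le 0}, \dD_i^{\ge 0})$ denote the t-structures with hearts $\pB_{X/Y}$ (for $i=1$) and $\langle \pF, \pT' \rangle_{\ex}$ (for $i=2$). The inclusion of hearts propagates via extension-closure of shifts $\aA_1[k] \subseteq \aA_2[k] \subseteq \dD_2^{\le 0}$ for $k \ge 0$ to $\dD_1^{\le 0} \subseteq \dD_2^{\le 0}$, and symmetrically $\dD_1^{\ge 0} \subseteq \dD_2^{\ge 0}$. For $F \in \langle \pF, \pT' \rangle_{\ex}$, the map $F \to \tau^{\ge 1}_1 F$ then lies in $\Hom(\dD_2^{\le 0}, \dD_2^{\ge 1}) = 0$; applying the standard fact that a distinguished triangle with a zero morphism splits, the truncation triangle $\tau^{\le 0}_1 F \to F \to \tau^{\ge 1}_1 F$ yields an isomorphism $(\tau^{\le 0}_1 F)[1] \cong \tau^{\ge 1}_1 F \oplus F[1]$, realising $\tau^{\ge 1}_1 F$ as a direct summand of an object in $\dD_1^{\le -1}$. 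Combined with $\tau^{\ge 1}_1 F \in \dD_1^{\ge 1}$ and the vanishing $\dD_1^{\le -1} \cap \dD_1^{\ge 1} = 0$, this forces $\tau^{\ge 1}_1 F = 0$. A symmetric argument yields $\tau^{\le -1}_1 F = 0$, so $F \in \pB_{X/Y}$, producing the reverse inclusion and completing the proof.
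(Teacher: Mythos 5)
Your proof is correct and follows essentially the same route as the paper: both arguments reduce to the single inclusion $\pB_{X/Y} \subseteq \langle \pF, \pT' \rangle_{\ex}$ (legitimate since both sides are hearts of bounded t-structures on $\dD_X$) and then verify that inclusion on a generating set. The only differences are cosmetic: you use the abstract tilting decomposition $\pB_{X/Y}=\langle \oO_X, \pF, \pT[-1]\rangle_{\ex}$, which treats $p=0,-1$ uniformly, where the paper checks the explicit generators of Remark~\ref{rmk:B}, and you spell out in full the standard fact that nested hearts of bounded t-structures coincide, which the paper takes for granted.
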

\begin{proof}
We show the case of $p=0$. 
It is enough to show 
that the LHS of (\ref{tilt}) is contained 
in the RHS of (\ref{tilt}),
since both are hearts of bounded t-structures on 
$\dD_X$. 
By Remark~\ref{rmk:B}, any object in the 
 LHS of (\ref{tilt}) is given by 
 a successive extensions of objects
$\oO_X$, $\omega_{f^{-1}(y)}$ for $y\in \Sing(Y)$, 
$\oO_{C_i}(-1)[-1]$ and objects in $\widetilde{\Coh}_{\le 1}(X)[-1]$. 
Thus it suffices to show that these objects 
are contained in the RHS of (\ref{tilt}). 
We have 
\begin{align}
\label{enough1}
&\Hom(\oO_X, \oF[-1])=0, \quad \Rightarrow \quad 
\oO_X \in \oT', \\
\label{enough2}
&\omega_{f^{-1}(y)}[1]\in \oPPer_{\le 1}(X/Y), \quad \Rightarrow \quad 
\omega_{f^{-1}(y)} \in \oF \\
\label{enough3}
&\Hom(\oO_{C_i}(-1)[-1], \oF[-1])=0, \quad \Rightarrow 
\quad \oO_{C_i}(-1)[-1] \in \oT', \\
\label{enough4}
&\Hom(\widetilde{\Coh}_{\le 1}(X)[-1], \oF[-1])=0, 
\quad \Rightarrow \quad \widetilde{\Coh}_{\le 1}(X)[-1]\subset 
\oT'.
\end{align}
Here (\ref{enough3}) follows from the 
definition of $\oF$, and (\ref{enough4}) follows 
from $f_{\ast}F=0$ for any $F\in \oF$. 
Hence (\ref{tilt}) holds. 
\end{proof}

\subsection{Constructions of weak stability conditions
(neighborhood of the large volume limit)}
Here we construct weak stability conditions on $\dD_X$, 
whose corresponding heart of bounded t-structure is 
$\aA_X$. (cf.~Lemma~\ref{lem:AB}.)
The set of weak stability conditions constructed here is 
interpreted as a neighborhood of the large volume limit 
at $X$
in terms of string theory. 
Let us take the elements, 
$$B+i\omega \in A(X/Y)_{\mathbb{C}}, \quad 
\omega'\in A(Y), \quad z\in \mathfrak{H} \mbox{ with }
\arg z \in (\pi/2, \pi). $$
The data 
\begin{align}\label{data1}
\xi=(1, -(B+i\omega), -i\omega', z),
\end{align}
in the LHS of (\ref{natiso}) 
determines the element $Z_{\xi}\in \prod_{i=0}^{2}\mathbb{H}_i^{\vee}$
via the isomorphism (\ref{natiso}). 
It is written as 
\begin{align*}
Z_{0, \xi}&\colon \mathbb{Z} \oplus N_1(X/Y) \ni (s, l)
\mapsto s-(B+i\omega)l, \\
Z_{1, \xi}&\colon N_1(Y) \ni l' \mapsto -i\omega'\cdot l', \\
Z_{2, \xi}&\colon \mathbb{Z} \ni r \mapsto zr.
\end{align*}
\begin{lem}\label{lem:pair1}
The pairs 
\begin{align}\label{pair4}
\sigma_{\xi}=(Z_{\xi}, \aA_X), 
\quad \xi \mbox{ is given by }(\ref{data1}),
\end{align} determine 
points in $\Stab_{\Gamma_{\bullet}}(\dD_X)$. 
\end{lem}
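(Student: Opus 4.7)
The plan is to invoke Proposition~\ref{prop:corr}: it suffices to show that $Z_\xi$ is a weak stability function on $\aA_X$ having the Harder--Narasimhan property, and then to verify the support and local finiteness axioms separately.

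\textbf{Weak stability function.} First I would verify that $Z_\xi(E)\in\mathfrak{H}$ for every nonzero $E\in\aA_X$. The rank map $\rk\colon\Gamma\to\mathbb{Z}$ is additive, and by (\ref{writeA}) every object of $\aA_X$ has $\rk(E)\ge 0$; if $\rk(E)=0$ then $E\in\Coh_{\le 1}(X)[-1]$, say $E=F[-1]$ with $F$ effective. I would split into cases: if $\rk(E)=r>0$ then $[E]\in\Gamma_2\setminus\Gamma_1$ and $Z_\xi(E)=zr\in\mathfrak{H}$ by the assumption $\arg z\in(\pi/2,\pi)$; if $r=0$ and $f_*[F]\neq 0$ then $[E]\in\Gamma_1\setminus\Gamma_0$ and $Z_\xi(E)=i\omega'\cdot[F]$ has positive imaginary part because $\omega'$ is ample on $Y$ and $[F]\in\NE(Y)\setminus\{0\}$; if $r=0$ and $f_*[F]=0$ but $[F]\neq 0$, then $[F]\in\NE(X/Y)\setminus\{0\}$ and $\Im Z_\xi(E)=\omega\cdot[F]>0$ since $\omega\in A(X/Y)$; finally if $[F]=0$ then $F\in\Coh_0(X)$ and $Z_\xi(E)=-\chi(F)\in\mathbb{R}_{<0}\subset\mathfrak{H}$.

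\textbf{Harder--Narasimhan property.} I would apply Proposition~\ref{suHN} and verify both chain conditions. Condition (b) is immediate from Lemma~\ref{lem:noether}(i): any descending chain of surjections gives an ascending chain of kernels in $E_1$, which stabilises by noetherianity. For condition (a), the key observation is that if $E_j/E_{j+1}\in\Coh_0(X)[-1]$, then by the computation above $\arg Z_\xi(E_j/E_{j+1})=\pi$, so the hypothesis $\arg Z_\xi(E_{j+1})>\arg Z_\xi(E_j/E_{j+1})$ is impossible. Thus every such chain automatically has $E_j/E_{j+1}\notin\Coh_0(X)[-1]$, and Lemma~\ref{lem:noether}(ii) forces termination.

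\textbf{Support and local finiteness.} For the support property, choose norms so that $\|(s,l)\|_0\le|s|+\|l\|$ and similarly on $\mathbb{H}_1,\mathbb{H}_2$. In the case $\rk(E)=r>0$ the bound is $\|E\|=|r|\le|z|^{-1}|Z_\xi(E)|$. In the case $\rk(E)=0$, $f_*[F]\neq 0$, ampleness of $\omega'$ gives $\omega'\cdot[F]\ge c\|[F]\|$ for $[F]\in\NE(Y)$. In the remaining case $f_*[F]=0$ but $[F]\neq 0$, ampleness of $\omega\in A(X/Y)$ bounds $\|[F]\|$ by $\Im Z_\xi(E)$, and then the real part $\Re Z_\xi(E)=-\chi(F)+B\cdot[F]$ combined with linearity of $B$ bounds $|\chi(F)|$. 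The $0$-dimensional case is trivial. Local finiteness follows a standard argument: for $\eta$ small, objects with phase close to a fixed $\phi$ fall within one of the strata above, where noetherian and artinian properties in $\Coh_{\le 1}(X)$ (for fixed bounded numerical class controlled by $|Z_\xi|$) or finite length of $\oO_X$-factors give the desired length condition.

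The main obstacle is condition (a) of the HN property: it requires the combination of the phase restriction (eliminating $0$-dimensional quotients) with the delicate chain-termination statement of Lemma~\ref{lem:noether}(ii), which is proved separately in Section~\ref{sec:tech}. The support property is also nontrivial in the mixed case $\rk=0$, $f_*[F]=0$ since one must use both the real and imaginary parts of $Z_\xi$ to simultaneously bound $\chi(F)$ and $\|[F]\|$.
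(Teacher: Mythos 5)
Your proposal follows essentially the same route as the paper: the same three-way case division (by rank, then by whether $f_{\ast}\beta$ vanishes) to verify the weak stability function condition, Proposition~\ref{suHN} combined with Lemma~\ref{lem:noether} for the Harder--Narasimhan property (your contrapositive phrasing of condition (a) is logically identical to the paper's), and the same case-by-case estimate of $\lVert E\rVert/\lvert Z_{\xi}(E)\rvert$ for the support property. The one place where your sketch falls short of an argument is local finiteness. The delicate point there is \emph{artinianity} of the quasi-abelian categories $\pP_{\xi}((\theta-\eta,\theta+\eta))$ for $\theta$ near $1$, where the semistable objects are shifts of sheaves supported on fibers of $f$; your appeal to ``noetherian and artinian properties in $\Coh_{\le 1}(X)$ for fixed bounded numerical class'' does not work as stated, since $\Coh_{\le 1}(X)$ is not artinian and the image of $Z_{0,\xi}$ need not be discrete (so one cannot bound chains by counting possible values of $Z$). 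The paper instead observes that for such $\theta$ one has $\pP_{\xi}((\theta-\eta,\theta+\eta))=\qQ((\theta-\eta,\theta+\eta))$ for the slicing $\qQ$ of the pair $(Z_{0,\xi},\Coh(X/Y))$, which is a genuine Bridgeland stability condition on $D^b(\Coh(X/Y))$ already known to be locally finite; you would need to supply this reduction (or an equivalent finiteness argument) to close the proof.
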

\begin{proof}
We check that (\ref{def:weak}) holds for any non-zero $E\in \aA_X$. 
We write $\cl(E)=(-n, -\beta, r)$
for $n\in \mathbb{Z}$, 
$\beta \in N_1(X)$ and $r\in \mathbb{Z}$. 
By (\ref{writeA}), we have one of the following. 
\begin{itemize}
\item We have $r>0$. In this case, we have 
$$Z_{\xi}(E)=zr \in \mathfrak{H}.$$
\item We have $r=0$, $\beta\in \NE(X)$ and $f_{\ast}\beta\neq 0$. 
In this case, we have
$$Z_{\xi}(E)=i\omega' \cdot f_{\ast}\beta \in \mathfrak{H}.$$
\item We have $r=0$ and $\beta \in \NE(X/Y)$. In this case, 
we have 
$$Z_{\xi}(E)=-n+(B+i\omega)\beta \in \mathfrak{H}.$$
\end{itemize}
The proofs to check other properties, i.e. 
Harder-Narasimhan property, support property and 
local finiteness will 
be given in 
Section~\ref{sec:tech}. 
\end{proof}
We define the subspace
$\uU_X \subset \Stab_{\Gamma_{\bullet}}(\dD_X)$
as follows. 
\begin{defi}\label{def:UX}
\emph{
We define $\uU_X \subset \Stab_{\Gamma_{\bullet}}(\dD_X)$
to be 
$$\uU_X \cneq \{ \sigma_{\xi} \colon \sigma_{\xi} \mbox{\rm{ is given by }}
(\ref{pair4})\}.$$
For a fixed $B_0\in N^1(X/Y)$, we set 
$$\uU_{X, B_0}\cneq \{ \sigma_{\xi} \in \uU_X :
\xi \mbox{\rm{ is given by }}(\ref{pair4}) \mbox{ with }
B=B_0\}.$$}
\end{defi}
By Lemma~\ref{lem:later}, the map $\xi \mapsto \sigma_{\xi}$
is continuous. 
In particular $\uU_X$ and $\uU_{X, B_{0}}$ are
 connected subspaces. 
The map (\ref{loc:hom}) restricts to the homeomorphisms, 
\begin{align}\label{Uhom}
&\Pi \colon \uU_{X} \stackrel{\sim}{\to}
\{1\} \times \{-A(X/Y)_{\mathbb{C}}\} \times 
\{-iA(Y)\} \times \mathfrak{H}', \\
\notag
&\Pi \colon \uU_{X, B_{0}} \stackrel{\sim}{\to}
\{1\} \times \{-\left(B_{0}+iA(X/Y)\right)\} \times 
\{-iA(Y)\} \times \mathfrak{H}'.
\end{align}
where $\mathfrak{H}'$ is 
$$\mathfrak{H}'=\{ z\in \mathfrak{H} :
\arg z\in (\pi/2, \pi)\}.$$
 \begin{rmk}
 The subspace $\uU_X \subset \Stab_{\Gamma_{\bullet}}(\dD_X)$
 is interpreted as a kind of limiting degeneration of 
 the 
 neighborhood of the large volume limit in string theory. 
 In fact 
 for $B+i\omega \in A(X)_{\mathbb{C}}$, let
  $Z_{(B, \omega)}\colon K(X) \to \mathbb{C}$ be 
 $$Z_{(B, \omega)}(E)=
 \int e^{-(B+i\omega)}\ch(E)\sqrt{\td_X} \in \mathbb{C}.$$
 If $E\in \Coh_{\le 1}(X)[-1]$ with $\cl(E)=(-n, -\beta, 0)$, 
 we have 
 $$Z_{(B, \omega)}(E)=-n+(B+i\omega)\beta, $$
 which coincides with $Z_{0, \xi}(\cl(E))$. 
 \end{rmk}

\subsection{Construction of weak stability conditions
(non-commutative points)}
Here we construct another family of weak stability conditions, 
whose corresponding hearts of bounded t-structures 
are $\pB_{X/Y}$.
(cf.~Lemma~\ref{lem:AB}.) Let $C_1, \cdots, C_N$ be the 
irreducible components of the exceptional locus of 
a flopping contraction $f\colon X\to Y$. 
We denote by $Z_y$ the fundamental cycle of the 
scheme theoretic fiber of $f$ at $y\in \Sing(Y)$. 
For $p=0, -1$, we set 
$\pV(X/Y)$ as follows, 
\begin{align}\label{pV}
\pV(X/Y)\cneq \left\{
B\in N^1(X/Y)_{\mathbb{R}} : 
\begin{array}{l}
(-1)^{p}B\cdot C_i <0, \ (-1)^{p}B\cdot Z_y >-1, \\
\mbox{ for all }1\le i\le N \mbox{ and }y\in \Sing(Y) \end{array}
\right\}.
\end{align}
For the elements, 
\begin{align}\notag
&B\in \pV(X/Y), \quad \omega'\in A(Y), \\
\label{element2}
& z_0, z_1 \in 
\mathfrak{H} \mbox{ with } \arg z_i \in (\pi/2, \pi], \ z_1\neq -1,
\end{align}
the data 
\begin{align}\label{data2}
\xi=(-z_0, z_0 B, -i\omega', z_1), 
\end{align}
in the LHS of (\ref{natiso}) 
determines the element $Z_{\xi}\in \prod_{i=0}^{2}\mathbb{H}_i^{\vee}$
via the isomorphism (\ref{natiso}). 
It is written as 
\begin{align*}
Z_{0, \xi}&\colon \mathbb{Z} \oplus N_1(X/Y) \ni (s, l)
\mapsto z_0(-s+Bl), \\
Z_{1, \xi}&\colon N_1(Y) \ni l' \mapsto -i\omega'\cdot l', \\
Z_{2, \xi}&\colon \mathbb{Z} \ni r \mapsto z_1r.
\end{align*}
\begin{lem}\label{lem:constpair2}
The pairs 
\begin{align}\label{const:pair2}
\sigma_{\xi}=(Z_{\xi}, \pB_{X/Y}), \quad 
\xi \mbox{ is given by }(\ref{data2}),
\end{align}
determine points in $\Stab_{\Gamma_{\bullet}}(\dD_X)$. 
\end{lem}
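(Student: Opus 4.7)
The plan is to mirror the proof of Lemma~\ref{lem:pair1}: first verify that $Z_\xi$ is a weak stability function on the heart $\pB_{X/Y}$, i.e.\ that $Z_\xi(E)\in\mathfrak{H}$ for every non-zero $E\in\pB_{X/Y}$; then establish the Harder-Narasimhan property via Proposition~\ref{suHN}, using the noetherianity granted by Lemma~\ref{lem:noether}; and finally dispatch the support property and local finiteness by the same techniques that Section~\ref{sec:tech} uses for the companion lemma. The cone conditions built into $\pV(X/Y)$ in (\ref{pV}) and the phase conditions in (\ref{element2}) will be exactly what is needed for the weak stability function step to go through.

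For the weak stability function check, I would invoke Remark~\ref{rmk:B}, which exhibits $\pB_{X/Y}$ as the extension closure of four kinds of generators. Since $Z_\xi$ is additive on short exact sequences in $\pB_{X/Y}$ and $\mathfrak{H}$ is closed under positive real combinations, it suffices to compute $Z_\xi$ on generators. One has $Z_\xi(\oO_X)=z_1\in\mathfrak{H}$ immediately, and for the fibre generator $\omega_{f^{-1}(y)}$ (resp.\ $\oO_{f^{-1}(y)}[-1]$ when $p=-1$) a direct Chern character calculation gives
\[
Z_\xi(\omega_{f^{-1}(y)})=z_0\bigl(1+B\!\cdot\! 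Z_y\bigr),\qquad Z_\xi(\oO_{f^{-1}(y)}[-1])=z_0\bigl(1-B\!\cdot\! Z_y\bigr),
\]
whose coefficients are strictly positive thanks to $(-1)^p B\cdot Z_y>-1$. Similarly, $Z_\xi(\oO_{C_i}(-1)[-1])=-z_0\, B\!\cdot\! C_i$ and $Z_\xi(\oO_{C_i}(-1))=z_0\, B\!\cdot\! C_i$ are positive real multiples of $z_0$ by the sign condition $(-1)^p B\!\cdot\! C_i<0$. Finally, for $F\in\widetilde{\Coh}_{\le 1}(X)[-1]$ one splits cases: if $f_\ast[F]=0$ then $F$ is forced to be $0$-dimensional (since $\widetilde{\Coh}_{\le 1}(X)$ excludes the exceptional components, the only way for the image class to vanish is zero-dimensional support) and $Z_\xi(F)$ is a positive real multiple of $z_0$; if $f_\ast[F]\neq 0$ then the class lies in $\Gamma_1\setminus\Gamma_0$ and $Z_\xi(F)$ reduces to $i\omega'\cdot f_\ast[F]$, a positive imaginary number by the amplitude of $\omega'$. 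In every case $Z_\xi(E)\in\mathfrak{H}$.

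For the Harder-Narasimhan property I would appeal to Proposition~\ref{suHN} together with Lemma~\ref{lem:noether}(i), which gives chain condition~(a) for free. The truly delicate part, and the step I expect to be the main obstacle, is chain condition~(b) on descending chains of quotients with $\arg Z_\xi(\Ker\pi_j)>\arg Z_\xi(E_{j+1})$: under the tilted heart $\pB_{X/Y}$, the interaction between the $z_0$-direction (which sees $\Gamma_0$) and the $z_1$- and $\omega'$-directions (which see the rank and the image class in $N_1(Y)$) must be controlled carefully. The strategy is to stratify by the filtration (\ref{filt2}): any infinite such chain must stabilise in rank and in $f_\ast$-image (using $\omega'\in A(Y)$ and positivity of $z_1$), after which the tail takes place entirely inside the $\Gamma_0$-direction, and there Lemma~\ref{lem:noether}(ii) combined with the explicit form of $Z_{0,\xi}=z_0(-s+Bl)$ forces termination. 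Once both chain conditions hold, the support property follows because on each filtration piece the map $v\mapsto Z_\xi(v)$ is an $\mathbb{R}$-linear injection, so $\lVert v\rVert\lesssim|Z_\xi(v)|$; and local finiteness follows by the same argument as in the proof of Lemma~\ref{lem:pair1} in Section~\ref{sec:tech}, since the semistable objects of a fixed phase in $\pB_{X/Y}$ again admit Jordan-H\"older filtrations thanks to noetherianity.
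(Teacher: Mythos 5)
Your overall strategy coincides with the paper's: the paper likewise reduces the weak stability function check to the generators of $\pB_{X/Y}$ listed in Remark~\ref{rmk:B}, computes $Z_{2,\xi}(\oO_X)=z_1$, $Z_{0,\xi}(\oO_{C_i}(-1)[-1])=-z_0B\cdot C_i$, $Z_{0,\xi}(\omega_{f^{-1}(y)})=z_0(1+B\cdot Z_y)$ and $Z_{1,\xi}(F[-1])=i\omega'\cdot f_{\ast}\beta$ for $F\in\widetilde{\Coh}_{\le 1}(X)$, and then defers the Harder--Narasimhan property, support property and local finiteness to the same argument as Lemma~\ref{lem:pair1}. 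Your treatment of the generators is correct, and on one point more careful than the paper's: for $F\in\widetilde{\Coh}_{\le 1}(X)$ with $f_{\ast}[F]=0$ you rightly note that $F$ must be $0$-dimensional and land in the $\Gamma_0$-piece, whereas the paper's phrasing tacitly assumes $f_{\ast}[F]\neq 0$.

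There are two slips in the second half. First, you have the two chain conditions of Proposition~\ref{suHN} interchanged. Noetherianity of $\pB_{X/Y}$ (Lemma~\ref{lem:noether}(i)) controls descending chains of \emph{quotients}, i.e.\ condition~(b), not condition~(a); condition~(a), about descending chains of \emph{subobjects}, is exactly the one that does not follow from noetherianity and instead needs Lemma~\ref{lem:noether}(ii) together with the observation that once the subquotients $E_j/E_{j+1}$ lie in $\ppPPer_{0}(X/Y)[-1]$, their central charges are positive real multiples of $z_0$, so the assumed phase inequalities eventually contradict (\ref{def:weak}). As written, your plan establishes (b) twice and leaves (a) unaddressed; swapping the roles repairs it, and the auxiliary argument you sketch (stabilise the rank and the $f_{\ast}$-image, then finish inside $\Gamma_0$ via Lemma~\ref{lem:noether}(ii)) is indeed the right one for the subobject chains. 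Second, the support property cannot be deduced from $Z_\xi$ being an ``$\mathbb{R}$-linear injection on each filtration piece'': the map $Z_{0,\xi}(s,l)=z_0(-s+Bl)$ has image the real line $\mathbb{R}z_0\subset\mathbb{C}$ and a nontrivial kernel on $\mathbb{H}_0\otimes\mathbb{R}\cong\mathbb{R}^2$, and $Z_{1,\xi}$ is likewise non-injective whenever $\dim N_1(Y)_{\mathbb{R}}>1$. The correct argument, as in Step~3 of the proof of Lemma~\ref{lem:pair1}, bounds $\lVert E\rVert/\lvert Z_\xi(E)\rvert$ only over the classes actually realised by nonzero objects of $\pB_{X/Y}$, using effectivity of $\beta$ and the strict positivity of the coefficients $1+B\cdot Z_y$, $-B\cdot C_i$ on the $\Gamma_0$-piece, rather than injectivity.
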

\begin{proof}
For simplicity we show the case of $p=0$. 
In order to check (\ref{def:weak}), it is 
enough to show this for generators of $\oB_{X/Y}$, 
given in (\ref{per:gen10}). 
We have 
\begin{align*}
Z_{2, \xi}(\oO_X)&=z_1 \in \mathfrak{H}, \\
Z_{1, \xi}(F[-1])&=i\omega' \cdot f_{\ast}\beta \in \mathfrak{H}, \\
Z_{0, \xi}(\oO_{C_i}(-1)[-1])&=-z_0B \cdot C_i \in \mathfrak{H}, \\
Z_{0, \xi}(\omega_{f^{-1}(y)})&=z_0(1+B\cdot Z_y) \in \mathfrak{H}, 
\end{align*}
by our choice of $z_i$ and $B$. Here 
$0\neq F\in \widetilde{\Coh}_{\le 1}(X)$ 
satisfies $\cl(F)=(n, \beta, 0)$. 
Note that $f_{\ast}\beta\in N_1(Y)$
is a non-zero effective 
class by the definition of $\widetilde{\Coh}_{\le 1}(X)$. 
Therefore 
the pair $(Z_{\xi}, \pB_{X/Y})$
satisfies (\ref{def:weak}). 
The Harder-Narasimhan property, the local finiteness 
and the support property are proved along with 
the same argument of Lemma~\ref{lem:pair1}, and  
we leave the readers to check the detail. 
\end{proof}
We define the subspaces 
$\pmV_{X/Y}\subset 
\pU_{X/Y} \subset \Stab_{\Gamma_{\bullet}}(\dD_{X})$
as follows. 
\begin{defi}\label{def:pU}
\emph{
We define $\pU_{X/Y}$, $\pmV_{X/Y}$ to be}
\begin{align*}
\pU_{X/Y}&=\{\sigma_{\xi} : 
\sigma_{\xi} \mbox{ \rm{is given by} }(\ref{const:pair2})\}, \\
\pmV_{X/Y}&=\{\sigma_{\xi}\in \pU_{X/Y} :
\xi \mbox{ \rm{is given by} }(\ref{data2}) \mbox{ \rm{with} }
z_0=-1\}.
\end{align*}
\end{defi}
By Lemma~\ref{lem:later}, the subspaces 
$\pU_{X/Y}$ and $\pmV_{X/Y}$ are connected. 
The map (\ref{loc:hom}) restricts to the homeomorphism, 
\begin{align}\label{lochom2}
\Pi \colon \pmV_{X/Y} \stackrel{\sim}{\to}
\{1\} \times \{-\pV(X/Y)\} \times 
\{-iA(Y)\} \times \mathfrak{H}'.
\end{align}

\subsection{Flops and weak stability conditions}
Let $\phi \colon X^{+} \dashrightarrow X$ be 
a flop as in the diagram (\ref{fig:flop}), 
and $\Phi$ a standard equivalence given in 
(\ref{standard}). 
Since the kernel of $\Phi$ is supported on the fiber
product $X\times _Y X^{+}$, (cf.~\cite[Proposition~4.2]{Ch},)
$\Phi$ restricts to the equivalence
\begin{align}\label{Dstan}
\Phi \colon \dD_{X^{+}} \stackrel{\sim}{\lr} \dD_X.
\end{align}
\begin{lem}\label{stanrest}
The standard equivalence $\Phi\colon 
\dD_{X^{+}} \to \dD_{X}$ restricts to the equivalence
between $\iB_{X^{+}/Y}$ and $\oB_{X/Y}$. 
\end{lem}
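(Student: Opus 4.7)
The plan is to exploit the presentation $\pB_{X/Y}=\langle \oO_X, \ppPPer_{\le 1}(X/Y)[-1]\rangle_{\ex}$ and reduce the claim to checking that $\Phi$ sends each of the two generating classes of $\iB_{X^+/Y}$ into the corresponding generating class of $\oB_{X/Y}$. Since $\oB_{X/Y}\subset\dD_X$ is closed under extensions and $\Phi$ is a triangulated equivalence, this forces $\Phi(\iB_{X^+/Y})\subseteq\oB_{X/Y}$; applying the same reasoning to a quasi-inverse of $\Phi$ then yields equality of the hearts.

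Concretely, I would verify two statements: (a) $\Phi(\oO_{X^+})\cong\oO_X$, and (b) $\Phi$ restricts to an equivalence $\iPPer_{\le 1}(X^+/Y)\stackrel{\sim}{\to}\oPPer_{\le 1}(X/Y)$. For (b), the theorem of Bridgeland recalled above already provides $\Phi(\iPPer(X^+/Y))=\oPPer(X/Y)$, so the only new content is preservation of the $1$-dimensional support condition. This follows from the fact that the Fourier--Mukai kernel of $\Phi$ is supported on the fibre product $X\times_Y X^+$: the relation $f\circ p_1=f^+\circ p_2$ together with the fact that $f$ and $f^+$ are small birational morphisms with at most $1$-dimensional exceptional fibres shows that $\Phi$ preserves $D^b(\Coh_{\le 1})$. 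Intersecting with the perverse hearts then yields (b), and the same argument applied to $\Phi^{-1}$ gives the reverse inclusion.

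The hard part is (a). Morally $\Phi$ ought to identify $\oO_{X^+}$ with $\oO_X$ since both are cyclic perverse structure sheaves with $\dR f_\ast\oO_X\cong\oO_Y\cong\dR f^+_\ast\oO_{X^+}$, but making this rigorous requires the explicit form of Bridgeland's standard equivalence. The cleanest route is to use the description of $\Phi$ via the structure sheaf $\oO_{X\times_Y X^+}$ of the fibre product: since $Y$ is Gorenstein with rational singularities and $p_1$ is birational and isomorphic in codimension one, a derived base change computation yields $\Phi(\oO_{X^+})\cong \dR p_{1\ast}\oO_{X\times_Y X^+}\cong\oO_X$. An alternative is to characterise $\oO_X$ abstractly within $\oPPer(X/Y)$ as the cyclic object whose pushforward to $Y$ is $\oO_Y$, and then match this characterisation against $\Phi(\oO_{X^+})$, which lies in $\oPPer(X/Y)$ and shares these properties because $\dR f_\ast\Phi(\oO_{X^+})$ can be computed through the fibre product.
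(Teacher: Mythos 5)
Your overall strategy --- send the generators $\oO_{X^{+}}$ and $\iPPer_{\le 1}(X^{+}/Y)[-1]$ to the corresponding generators of $\oB_{X/Y}$ and conclude by extension-closedness --- is genuinely different from the paper's. Your step (b) is fine and matches what the paper uses: Bridgeland's theorem plus the fact that the kernel is supported on $X\times_Y X^{+}$ gives $\Phi(\iPPer_{\le 1}(X^{+}/Y))=\oPPer_{\le 1}(X/Y)$. The paper, however, is structured precisely so as to \emph{avoid} your step (a): it notes that $\Phi$ preserves the torsion pairs $(\ppPPer_{\le 1}(X/Y), \ppPPer_{\ge 2}(X/Y))$ and hence induces an equivalence of the ambient tilted hearts $\iPPer^{\dag}(X^{+}/Y)[-1]\simeq \oPPer^{\dag}(X/Y)[-1]$; since $\pB_{X/Y}=\dD_X\cap\ppPPer^{\dag}(X/Y)[-1]$ by Proposition~\ref{t-str} and the restriction $\Phi(\dD_{X^{+}})=\dD_X$ is already established, intersecting yields the lemma without ever computing $\Phi(\oO_{X^{+}})$. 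Indeed, the remark following Lemma~\ref{preiso} states explicitly that $\Phi(\oO_{X^{+}})\cong\oO_X$ is true but is not used.

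As written, your justification of (a) has a gap. The projection $p_1\colon X\times_Y X^{+}\to X$ is \emph{not} an isomorphism in codimension one: over each $y\in \Sing(Y)$ its fibre is $f^{-1}(y)\times(f^{+})^{-1}(y)$, which is two-dimensional, so $p_1$ contracts a divisor of the fibre product onto the flopping curve. The identifications $p_{1\ast}\oO_{X\times_Y X^{+}}\cong\oO_X$ and $R^{i}p_{1\ast}\oO_{X\times_Y X^{+}}=0$ for $i>0$ therefore require control of the singularities (normality, rationality) of the fibre product itself, not of $Y$, and this is not supplied. Your alternative route is also not pinned down: to characterise $\oO_X$ inside $\oPPer(X/Y)$ as ``the cyclic object with derived pushforward $\oO_Y$'' you would need a uniqueness statement for such objects, which is where the actual work lies. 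Either repair (a) along these lines, or adopt the paper's intersection argument, which bypasses the computation of $\Phi(\oO_{X^{+}})$ entirely.
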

\begin{proof}
Note that $\Phi$ induces the  
equivalences between $\iPPer(X^{+}/Y)$
and $\oPPer(X/Y)$, and 
the equivalence 
between 
 $\iPPer_{\le 1}(X^{+}/Y)$ and 
 $\oPPer_{\le 1}(X/Y)$. Hence
  $\Phi$ induces the equivalence, 
 \begin{align}\label{PhiA}
 \Phi \colon \iPPer^{\dag}(X^{+}/Y)[-1] \stackrel{\sim}{\lr}
 \oPPer^{\dag}(X/Y)[-1],
 \end{align}
 where $\pPPer^{\dag}(X/Y)$ is given in 
 Definition~\ref{def:perdag}. 
 Since we have 
 $$\pB_{X/Y}=\dD_X \cap \ppPPer^{\dag}(X/Y)[-1], $$
 by Proposition~\ref{t-str}, 
 we obtain the result by restricting
 (\ref{PhiA}) to $\dD_{X^{+}}$ and $\dD_{X}$. 
 \end{proof}
Similar to $\Gamma_{\bullet}$, we set 
\begin{align*}
&\Gamma_0^{+}=\mathbb{Z}\oplus N_1(X^{+}/Y), \\
&\Gamma_1^{+}=\mathbb{Z}\oplus N_1(X^{+}), \\
&\Gamma^{+}=\Gamma_2^{+}=\mathbb{Z} \oplus 
 N_{1}(X^{+}) \oplus \mathbb{Z}. 
\end{align*}
The associated subquotient is denoted by 
$\mathbb{H}^{+}_i$. 
\begin{lem}\label{preiso}
There is a filtration preserving 
isomorphism 
$\Phi_{\Gamma}\colon \Gamma^{+}_{\bullet} \to
\Gamma_{\bullet}$, which satisfies the 
following. 
\begin{itemize}
\item
The following diagram commutes,
\begin{align}\label{comdi0}
\xymatrix{
\dD_{X^{+}} \ar[r]^{\Phi} \ar[d]_{\cl} & \dD_{X} \ar[d]^{\cl} \\
\Gamma^{+}\ar[r]^{\Phi_{\Gamma}} & \Gamma.
}
\end{align}
\item The induced morphism $\gr_{\bullet} \Phi_{\Gamma}$ satisfies
\begin{align*}
\gr_0 \Phi_{\Gamma}
&\colon \mathbb{Z} \oplus N_1(X^{+}/Y)
 \ni (z, C) \\
 & \qquad  \mapsto (z, \phi_{\ast}C)
 \in \mathbb{Z} \oplus N_1(X/Y) \\
 \gr_1 \Phi_{\Gamma}& \colon N_1(Y) \ni D' \mapsto 
 D' \in N_1(Y), \\
 \gr_2  \Phi_{\Gamma} &\colon \mathbb{Z} \ni z' \mapsto 
 z' \in \mathbb{Z}.
\end{align*}
\end{itemize}
\end{lem}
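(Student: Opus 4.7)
The plan is to define $\Phi_\Gamma$ by requiring the diagram (\ref{comdi0}) to commute, then to verify the filtration-preserving property together with the description of the graded pieces.

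First I would show that $\cl \colon K(\dD_X) \to \Gamma$ is surjective. Indeed, $\cl(\oO_X) = (0, 0, 1)$; for a closed point $p \in X$ we have $\cl(\oO_p) = (1, 0, 0)$; and for a curve $C \subset X$ with $[C] = \beta$ and $\chi(\oO_C) = n$, the Calabi-Yau condition $c_1(X) = 0$ gives $\cl(\oO_C) = (n, \beta, 0)$ via Riemann-Roch. Combined with the fact that every class in $N_1(X)$ is a difference of effective curve classes, surjectivity follows. Hence $\Phi_\Gamma$ is uniquely determined if it exists, and existence amounts to showing that $\cl \circ \Phi$ factors through $\cl \colon K(\dD_{X^{+}}) \to \Gamma^{+}$.

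I would prove this factorization via Grothendieck-Riemann-Roch. The kernel of $\Phi$ is supported on the fiber product $X \times_Y X^{+}$, and for any $E \in \dD_{X^{+}}$ the description of generators gives $\ch_1(E) = 0$. Combined with $c_1(X) = c_1(X^{+}) = 0$, the Fourier-Mukai formula for $\ch(\Phi E)$ depends only on the triple $(\ch_0(E), \ch_2(E), \ch_3(E)) = \cl(E)$, which defines $\Phi_\Gamma$ on the image of $\cl$ and, by surjectivity on the source side as well, uniquely on all of $\Gamma^{+} \to \Gamma$.

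To identify the graded pieces I would argue as follows. Since $\Phi$ is an isomorphism over $Y \setminus \Sing(Y)$ with kernel of generic rank one there, we have $\ch_0(\Phi E) = \ch_0(E)$, giving $\gr_2 \Phi_\Gamma = \id$. For $\gr_1 \Phi_\Gamma$, the identity $f \circ \phi = f^{+}$ on the generic locus yields $f_{\ast} \ch_2(\Phi E) = f^{+}_{\ast} \ch_2(E)$ in $N_1(Y)$, hence the identity. For $\gr_0 \Phi_\Gamma$, the $N_1(X^{+}/Y)$ component maps via the strict transform $\phi_{\ast}$ (the inverse dual of the strict transform on divisors), while on the $\mathbb{Z}$ factor corresponding to $\ch_3$ the map is the identity, using that derived equivalences preserve Euler characteristics and that for objects supported on contracted curves the correction terms from $\td$ vanish under the Calabi-Yau hypothesis. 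Filtration preservation is immediate from these three computations, and invertibility follows by applying the same analysis to $\Phi^{-1}$.

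The main obstacle is the Grothendieck-Riemann-Roch step establishing well-definedness of $\Phi_\Gamma$ at the level of groups: one must check that no contribution from higher Chern classes of $E$ survives in $\ch(\Phi E)$ outside of what is already determined by $\cl(E)$. The vanishing $c_1 = 0$ on both sides kills many Todd correction terms, and the codimension at least two of the exceptional locus controls the remaining contributions, but the bookkeeping through the kernel requires care.
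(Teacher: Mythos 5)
Your route is genuinely different from the paper's and is in principle workable, but it shoulders the entire Fourier--Mukai/Grothendieck--Riemann--Roch computation that the paper deliberately avoids. The paper's proof is much lighter: it quotes \cite[Proposition~5.2]{ToBPS}, which already provides the commutative square $(\ch_3,\ch_2)\circ\Phi=(\id,\phi_{\ast})\circ(\ch_3,\ch_2)$ on $D^b(\Coh_{\le 1}(X^{+}))$, and then simply \emph{defines}
\[
\Phi_{\Gamma}(s,l,r)=(s,\phi_{\ast}l,0)+rv,\qquad v\cneq\cl\,\Phi(\oO_{X^{+}}).
\]
Since $K(\dD_{X^{+}})$ is generated by $[\oO_{X^{+}}]$ together with classes of objects of $\Coh_{\le 1}(X^{+})$, commutativity of (\ref{comdi0}) follows immediately from the cited square plus the definition of $v$; since $v=(\ast,\ast,1)$, the map $\Phi_{\Gamma}$ is unipotent and hence an isomorphism, and the graded pieces are read off from the formula. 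In particular the paper needs neither surjectivity of $\cl$, nor any Todd-class bookkeeping, nor the precise value of $\cl\,\Phi(\oO_{X^{+}})$ beyond its rank. Your approach buys a self-contained argument at the cost of essentially re-proving the cited proposition.

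Two points in your write-up need repair. First, ``derived equivalences preserve Euler characteristics'' is false as stated: $\Phi$ preserves the Euler \emph{pairing}, i.e.\ $\chi(\oO_{X^{+}},E)=\chi(\Phi\oO_{X^{+}},\Phi E)$, so to conclude $\chi(\Phi E)=\chi(E)$ you need $\Phi(\oO_{X^{+}})\cong\oO_{X}$ (true here, but the paper explicitly notes it does not use this fact) or a direct computation of $\ch_3$ through the kernel. Second, your well-definedness step tacitly identifies $\ch_2(E)$ as a cohomology class with its image in $N_1(X^{+})$; since $\cl$ is defined modulo numerical equivalence, you must invoke the agreement of numerical and homological equivalence for $1$-cycles on a smooth projective $3$-fold (valid over $\mathbb{Q}$, but it should be said). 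Relatedly, the assertions $\ch_2(\Phi E)=\phi_{\ast}\ch_2(E)$ and $\ch_3(\Phi E)=\ch_3(E)$ for objects supported on the exceptional curves are stated without proof; they are exactly the content of \cite[Proposition~5.2]{ToBPS}, so either prove them via your GRR analysis or cite them as the paper does.
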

\begin{proof}
First the following diagram is commutative
by~\cite[Proposition~5.2]{ToBPS}, 
\begin{align}\label{comdi}\xymatrix{
D^b(\Coh_{\le 1}(X^{+})) \ar[r]^{\Phi} \ar[d]_{(\ch_3, \ch_2)}
 & D^b(\Coh_{\le 1}(X)) \ar[d]^{(\ch_3, \ch_2)} \\
\mathbb{Z}\oplus 
N_{1}(X^{+})\ar[r]^{(\id, \phi_{\ast})} & \mathbb{Z}\oplus N_{1}(X).
}\end{align}
Let $v=\cl \Phi(\oO_{X^{+}}) \in \Gamma$
and set $\Phi_{\Gamma}$ as 
$$\Phi_{\Gamma}(s, l, r) = (s, \phi_{\ast}l, 0)+rv.$$
The commutativity of (\ref{comdi}) 
implies that $\Phi_{\Gamma}$ fits into 
the commutative diagram (\ref{comdi0}). 
Since $v$ is of the form $(\ast, \ast, 1)$, 
the map $\Phi_{\Gamma}$ is isomorphism.
The induced isomorphism $\gr_{\bullet}\Phi_{\Gamma}$
is of the desired form by the 
construction. 
\end{proof}
\begin{rmk}
In fact one can show that $\Phi(\oO_{X^{+}})\cong \oO_{X}$, 
hence $\cl \Phi(\oO_{X^{+}})=(0, 0, 1)$. 
However we do not use this fact. 
\end{rmk}
By Lemma~\ref{lem:group}
and Lemma~\ref{preiso}, we 
have the commutative diagram,  
\begin{align}\label{comdiag}
\xymatrix{
\Stab_{\Gamma^{+}_{\bullet}}(\dD_{X^{+}})
\ar[r]^{\Phi_{\ast}} \ar[d]_{\Pi^{+}}
& \Stab_{\Gamma_{\bullet}}(\dD_{X}) \ar[d]^{\Pi} \\
\prod_{i=0}^2 \mathbb{H}_i^{+ \vee} \ar[r]^{(\gr\Phi_{\Gamma}^{-1})^{\vee}} &
\prod_{i=0}^2 \mathbb{H}_i^{\vee}.
}\end{align}
\begin{prop}\label{VU}
(i) We have 
\begin{align}\label{PhiV}
\Phi_{\ast}(\imV_{X^{+}/Y})=\omV_{X/Y}.
\end{align}

(ii) We have 
$\pmV_{X/Y} \subset \overline{\uU}_{X}$. 
In particular, we have the inclusion 
\begin{align}\label{PhiV2}
\omV_{X/Y} \subset \overline{\uU}_{X} \cap 
\Phi_{\ast}\overline{\uU}_{X^{+}}, 
\end{align}
and the following subset $\uU \subset \Stab_{\Gamma_{\bullet}}(\dD_X)$
is connected, 
\begin{align}\label{connU}
\uU\cneq \uU_X \cup \Phi_{\ast}\uU_{X^{+}} \cup \oU_{X/Y} 
\cup \iU_{X/Y}.
\end{align}
\end{prop}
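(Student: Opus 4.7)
The plan is to combine Lemma~\ref{stanrest} with the commutative diagram~(\ref{comdiag}) to prove (i), and to use Lemma~\ref{lem:tiltAB} together with Lemma~\ref{lem:scont} to prove (ii).

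For (i), Lemma~\ref{stanrest} already identifies the hearts $\iB_{X^{+}/Y}$ and $\oB_{X/Y}$ under $\Phi_{\ast}$, so the task reduces to checking that $(\gr \Phi_{\Gamma}^{-1})^{\vee}$ sends the data $\xi^{+}=(1,-B^{+},-i\omega',z_1)$ (with $z_0=-1$) defining points of $\imV_{X^{+}/Y}$ to data $(1,-\phi_{\ast}B^{+},-i\omega',z_1)$ defining points of $\omV_{X/Y}$. From Lemma~\ref{preiso}, $\gr_0 \Phi_{\Gamma}$ acts as $\phi_{\ast}$ on the $N_1$-factor while $\gr_1,\gr_2$ are identities; taking the inverse dual via the isomorphism~(\ref{natiso}) then acts as the strict transform $\phi_{\ast}$ on the $N^1$-factor and as the identity on the remaining three factors, so it suffices to verify $\phi_{\ast}(\iV(X^{+}/Y))=\oV(X/Y)$. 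This in turn follows from the numerical behavior of the flop: after suitable relabeling $\phi_{\ast}[C_i^{+}]=-[C_i]$, since $\phi_{\ast}$ sends $\NE(X^{+}/Y)$ to $-\NE(X/Y)$ and preserves the extremal rays spanned by the exceptional curves, and similarly $\phi_{\ast}[Z_y^{+}]=-[Z_y]$ for the fiber cycles. Using the pairing identity $\phi_{\ast}B^{+}\cdot \phi_{\ast}\gamma=B^{+}\cdot \gamma$ coming from the very definition of $\phi_{\ast}$ as the inverse of the dual, the inequalities $B^{+}\cdot C_i^{+}>0$ and $B^{+}\cdot Z_y^{+}<1$ translate precisely to $\phi_{\ast}B^{+}\cdot C_i<0$ and $\phi_{\ast}B^{+}\cdot Z_y>-1$, as required.

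For the first inclusion in (ii), I realize $\pmV_{X/Y}\subset \overline{\uU}_X$ directly via Lemma~\ref{lem:scont}. By Lemma~\ref{lem:tiltAB}, $\pB_{X/Y}$ is the tilting of $\aA_X$ with respect to the torsion pair $(\pT',\pF[-1])$. Given any $\sigma_0=(Z_{\xi_0},\pB_{X/Y})\in \pmV_{X/Y}$ with $\xi_0=(1,-B,-i\omega',z_1)$, fix any $\omega \in A(X/Y)$ and choose a continuous path $z_{1,t}$ in $\mathfrak{H}$ with $z_{1,0}=z_1$ and $\arg z_{1,t}\in (\pi/2,\pi)$ for $t\in (0,1)$ (always possible since $z_1\neq -1$, e.g., by a small rotation toward the upper half-plane). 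Setting $\xi_t=(1,-(B+it\omega),-i\omega',z_{1,t})$ and $\sigma_t=(Z_{\xi_t},\aA_X)$, the data $\xi_t$ manifestly meets the defining conditions of Definition~\ref{def:UX}, so $\sigma_t\in \uU_X$ for $t\in (0,1)$. Lemma~\ref{lem:scont} then yields $\lim_{t\to 0^+}\sigma_t=\sigma_0$, so $\sigma_0\in \overline{\uU}_X$.

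The inclusion~(\ref{PhiV2}) is immediate from (i) and the first inclusion of (ii) applied to $X^{+}$ with $p=-1$: $\omV_{X/Y}=\Phi_{\ast}\imV_{X^{+}/Y}\subset \Phi_{\ast}\overline{\uU}_{X^{+}}=\overline{\Phi_{\ast}\uU}_{X^{+}}$, combined with $\omV_{X/Y}\subset \overline{\uU}_X$. For the connectedness of $\uU$, each of the four pieces is itself connected by the remarks following Definitions~\ref{def:UX} and~\ref{def:pU}; the paths constructed above lie in $\uU_X\cup \pU_{X/Y}\subset \uU$, connecting $\uU_X$ to both $\oU_{X/Y}$ through $\omV_{X/Y}$ and to $\iU_{X/Y}$ through $\imV_{X/Y}$. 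The symmetric analog of (i), namely $\Phi_{\ast}(\oU_{X^{+}/Y})=\iU_{X/Y}$, together with the analogous paths on $X^{+}$, connects $\Phi_{\ast}\uU_{X^{+}}$ to $\iU_{X/Y}$ through $\Phi_{\ast}\omV_{X^{+}/Y}=\imV_{X/Y}$, making all four pieces pairwise reachable within $\uU$. The main obstacle is the sign bookkeeping in (i) for $\phi_{\ast}(\iV(X^{+}/Y))=\oV(X/Y)$, where the conventions of the strict transform on divisors must be reconciled with the induced action on curve classes and fiber cycles via the flop; once this numerical step is settled, everything else is a formal application of the tilting/limit machinery provided by Lemma~\ref{lem:scont}.
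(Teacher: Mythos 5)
Your proposal is correct and takes essentially the same route as the paper: part (i) via Lemma~\ref{stanrest} plus the diagram~(\ref{comdiag}) and the identification $\phi_{\ast}(\iV(X^{+}/Y))=\oV(X/Y)$ (which the paper asserts and you verify in more detail), and part (ii) via the explicit limiting path together with Lemma~\ref{lem:tiltAB} and Lemma~\ref{lem:scont}. The one blemish is the appeal, in the connectedness step, to the unproven ``symmetric analog of (i)'' $\Phi_{\ast}(\oU_{X^{+}/Y})=\iU_{X/Y}$, which does not follow from Lemma~\ref{stanrest}; it is also unnecessary, since your inclusion~(\ref{PhiV2}) already shows that the connected set $\oU_{X/Y}\supset\omV_{X/Y}$ meets both $\overline{\uU}_{X}$ and $\overline{\Phi_{\ast}\uU_{X^{+}}}$, and $\imV_{X/Y}\subset\overline{\uU}_{X}$ attaches $\iU_{X/Y}$, so all four pieces of $\uU$ are already linked.
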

\begin{proof}
(i)
First note that the strict transform 
$\phi_{\ast}\colon N^1(X^{+}/Y) \to N^1(X/Y)$ 
induces the homeomorphism, 
$$\phi_{\ast}\colon \iV(X^{+}/Y) \stackrel{\sim}{\to}
\oV(X/Y).$$
Hence by the homeomorphism (\ref{lochom2}), 
the map $(\gr \Phi_{\Gamma}^{-1})^{\vee}$
in the diagram (\ref{comdiag})
induces the homeomorphism, 
$$\Pi^{+}\left(\imV_{X^{+}/Y}\right) \stackrel{\sim}{\lr} 
\Pi \left(\omV_{X/Y}\right).$$
Combined with Lemma~\ref{stanrest}, we obtain (\ref{PhiV}). 
 
 (ii) By (\ref{Uhom}) and (\ref{lochom2}), we have 
 $$\Pi\left(\pmV_{X/Y}\right) \subset
  \overline{\Pi\left(\uU_{X}\right)}.$$
 By Lemma~\ref{lem:scont}
 and Lemma~\ref{lem:tiltAB}, we obtain 
  $\pmV_{X/Y} \subset \overline{\uU}_{X}$. 
  Combined with (i), we conclude (\ref{PhiV2})
  and the connectedness of (\ref{connU}). 
\end{proof}
\begin{rmk}
The subspace
$$\overline{\uU}_{X} \cup
\Phi_{\ast}\overline{\uU}_{X^{+}} \subset \Stab_{\Gamma_{\bullet}}(\dD_X),$$
consists of two chambers $\uU_{X}$ and
$\Phi_{\ast}\uU_{X^{+}}$, which is 
an analogue of
the chamber structure on the space of 
stability conditions on $D^b(\Coh_{\le 1}(X))$. 
(cf.~\cite[Theorem~4.11]{Tst}.)
The chamber $\uU_{X}$
(resp.~$\Phi_{\ast}\uU_{X^{+}}$)
 corresponds to the neighborhood 
of the large volume limit at $X$,
(resp.~$X^{+}$,)
and the wall $\omV_{X/Y}$ 
corresponds to the locus of non-commutative points. 
See~\cite[Figure~8]{Sz}.
\end{rmk}

\section{Wall-crossing formula}\label{sec:semi}
In this section, we review the main results of~\cite[Section~5]{Tcurve1}.
As in the previous section, $f\colon X\to Y$ is a 
flopping contraction from a smooth projective Calabi-Yau 
3-fold $X$. 
\subsection{Assumption}
Here we recall the wall-crossing formula 
of generating series of Donaldson-Thomas type invariants 
under change of weak stability 
conditions, given in~\cite[Section~5]{Tcurve1}. 
The formula is established under some conditions 
given in Assumption~\ref{assum} below. 
Let us recall that, 
by the result of Lieblich~\cite{LIE},  
there is an algebraic stack $\mM$
locally of finite type over $\mathbb{C}$ which
parameterizes $E\in D^b(\Coh(X))$ satisfying 
\begin{align}\label{Ext}
\Ext^i(E, E)=0, \quad \mbox{ for any }i<0.
\end{align}
Let $\aA \subset \dD_{X}$ be the heart of a bounded 
t-structure on $\dD_X$. 
We can consider the following (abstract) substack,
$$\oO bj(\aA) \subset \mM,$$
which parameterizes objects $E\in \aA$.
The above stack decomposes as 
\begin{align*}
\oO bj(\aA)=\coprod _{v\in \Gamma} \oO bj^{v}(\aA),
\end{align*}
where $\oO bj^{v}(\aA)$
is the stack of 
objects $E\in \aA$ with $\cl(E)=v$. 

Let $\Gamma_{\bullet}$ be the filtration (\ref{filt2}). 
The wall-crossing formula~\cite[Section~5]{Tcurve1}
is applied for a certain connected subset 
$$\vV \subset \Stab_{\Gamma_{\bullet}}(\dD_X),$$
satisfying the following assumption. 
\begin{assum}\label{assum}\emph{\bf{\cite[Assumption~4.1]{Tcurve1}}}
For any $\sigma=(Z=\{Z_i\}_{i=0}^{2}, \pP) \in \vV$ with 
$\aA=\pP((0, 1])$, the following conditions 
are satisfied. 
\begin{itemize}
\item We have 
\begin{align}\label{phase1}
\oO_X \in \pP(\psi), \quad 
\frac{1}{2}<\psi < 1, 
\end{align}
and $\oO_X$ is the only object $E\in \pP(\psi)$
with $\cl(E)=(0, 0, 1)$. 
\item 
We have 
\begin{align}
Z_1(\mathbb{H}_1) \subset \mathbb{R}\cdot i.
\end{align}
\item For any $v, v'\in \Gamma_0$ and 
any other point $\tau=(W, \qQ) \in \vV$, 
 we have 
\begin{align}\label{extra}
Z(v)\in \mathbb{R}_{>0}Z(v') \quad 
\mbox{if and only if} \quad 
W(v) \in \mathbb{R}_{>0}W(v').
\end{align}
\item For any $v\in \Gamma$ with $\rk(v)= 1$
or $v\in \Gamma_0$, the stack 
of objects 
$$\oO bj^{v}(\aA) \subset \mM,$$
is an open substack of $\mM$. In particular, 
$\oO bj^{v}(\aA)$ is an algebraic stack 
locally of finite type over $\mathbb{C}$. 
\item For any $v\in \Gamma$ with $\rk(v)=1$ or 
$v\in \Gamma_{0}$, the stack of 
$\sigma$-semistable objects $E\in \aA$
with $\cl(E)=v$, 
$$\mM^v(\sigma)\subset \oO bj^{v}(\aA), $$
is an open substack of finite type 
over $\mathbb{C}$. 
\item There 
are subsets $0\in T\subset S\subset \mathbb{Z}\oplus N_1(X)$, 
which satisfy Assumption~\ref{assum2} below.  
\item For any other point $\tau \in \vV$, 
there is a good path 
(see Definition~\ref{def:good} below) in $\vV$ which connects 
$\sigma$ and $\tau$. 
\end{itemize}
\end{assum}
As for the last condition of Assumption~\ref{assum}, 
the notion of good path is defined as follows. 
\begin{defi}\label{def:good}
\emph{
A path $[0, 1] \ni t \mapsto \sigma_t \in \vV$
is \textit{good} 
if for any $t\in (0, 1)$ and 
$v\in \Gamma_{0}$ satisfying 
$Z_t(v) \in \mathbb{R}_{>0}Z_t(\oO_X)$, we have either
\begin{align}
\label{good2}
&\arg Z_{t+\varepsilon}(v)< \arg Z_{t+\varepsilon}(\oO_X), \quad 
\arg Z_{t-\varepsilon}(v)> \arg Z_{t-\varepsilon}(\oO_X), \ \mbox{or} \\
\label{good1}
&\arg Z_{t+\varepsilon}(v)> \arg Z_{t+\varepsilon}(\oO_X), \quad 
\arg Z_{t-\varepsilon}(v)< \arg Z_{t-\varepsilon}(\oO_X), 
\end{align}
for $0<\varepsilon \ll 1$.}
\end{defi}
As we mentioned in Remark~\ref{rmk:BG}, 
we assume the following conjecture.
\begin{conj}\label{conj:BG}
For any $[E] \in \mM$, 
let $G$ be a maximal reductive 
subgroup in $\Aut(E)$.
Then there exists a $G$-invariant analytic 
open neighborhood $V$ of $0$ in 
$\Ext^1(E, E)$, 
a $G$-invariant holomorphic function $f\colon V\to \mathbb{C}$
with $f(0)=df|_{0}=0$, and a smooth morphism 
of complex analytic stacks
\begin{align*}
\Phi \colon [\{df=0\}/G] \to \mM,
\end{align*}
of relative dimension $\dim \Aut(E)- \dim G$. 
\end{conj}
The above conjecture is
proved in~\cite[Theorem~5.5]{JS} if $E\in \Coh(X)$, 
and a similar result is 
announced by Behrend-Getzler~\cite{BG}. 
In what follows, we assume the above conjecture.

\subsection{Wall-crossing formula of 
the generating series}
Let $\vV \subset \Stab_{\Gamma_{\bullet}}(\dD_X)$ be 
a connected subset satisfying Assumption~\ref{assum}. 
We introduce the following notion. 
\begin{defi}\emph{
We say $\sigma =(Z, \pP) \in \vV$ is \textit{general}
if there is no $v\in \Gamma_0$ which satisfies 
$Z(v) \in \mathbb{R}_{>0}Z(\oO_X)$.} 
\end{defi}
For general $\sigma, \tau \in \vV$, take 
a good path, (cf.~Definition~\ref{def:good},) 
$$[0, 1] \ni t \mapsto \sigma_t =(Z_t, \pP_t)
\in \vV, $$
which satisfies $\sigma_0=\sigma$ and $\sigma_1=\tau$. 
For $t\in [0, 1]$, let
$W_t$ be the set, 
$$W_t=\{v\in \Gamma_{0} :
Z_{t}(v)\in \mathbb{R}_{>0}Z_{t}(\oO_X)\}.$$  
For $t\in [0, 1]$ with $W_t \neq \emptyset$, we set 
$$\epsilon(t)=1, \quad 
(\mbox{resp.~}\epsilon(t)=-1,)$$
if
(\ref{good2}) (resp.~(\ref{good1})) happens at $t$
for $v\in W_{t}$. 
By the condition (\ref{extra}), the value $\epsilon(t)$ does 
not depend on a choice of $v\in W_t$. 

In~\cite[Section~5]{Tcurve1}, 
we discussed the wall-crossing formula 
for invariants without Behrend function. 
If we assume Conjecture~\ref{conj:BG},
then the same argument is applied 
for invariants with Behrend function, 
without any major modification. 
(See~\cite[Section~8]{Tcurve1} in the arXiv version.)
Combined the main result of~\cite[Section~5]{Tcurve1}
with Conjecture~\ref{conj:BG}), 
we have the following: 
\begin{thm}\label{thm:summ}
\emph{\cite[Theorem~5.8, Corollary~5.11]{Tcurve1}}
Let $\vV \subset \Stab_{\Gamma_{\bullet}}(\dD_X)$ be
a connected subset satisfying Assumption~\ref{assum}.
Assuming Conjecture~\ref{conj:BG}, 
we have the following. 
\begin{itemize}
\item For $\sigma=(Z, \aA) \in \vV$ and
$v=(-n, -\beta, 1)\in \Gamma$,
(resp.~$v=(-n, -\beta, 0)\in \Gamma_0$,)
there 
is a counting invariant of $\sigma$-semistable 
objects of numerical type $v$,
$${}{\DT}_{n, \beta}(\sigma)\in \mathbb{Q}, 
\quad (\mbox{resp. }{}{N}_{n, \beta}\in \mathbb{Q},)$$
such that if 
$\mM^v(\sigma)$ is written as 
$[M/\mathbb{G}_m]$ for a $\mathbb{C}$-scheme $M$ with a trivial  
$\mathbb{G}_m$-action, we have (cf.~Remark~\ref{rmk:HT2},)
\begin{align}\label{vir}
{}{\DT}_{n, \beta}(\sigma)=\int_{[M]^{\rm{vir}}}1, \quad 
\left(resp.~{}{N}_{n, \beta}=\int_{[M]^{\rm{vir}}}1.\right)
\end{align}
\item 
Let ${}{\DT}(\sigma)$ and ${}{\DT}_{0}(\sigma)$
be the series, 
\begin{align}\label{DTse1}
{}{\DT}(\sigma)&=
\sum_{n, \beta}{}{\DT}_{n, \beta}(\sigma)x^n y^{\beta}, \\
\label{DTse2}
{}{\DT}_{0}(\sigma)&=
\sum_{(n, \beta)\in \Gamma_{0}}
{}{\DT}_{n, \beta}(\sigma)x^n y^{\beta}.
\end{align}
Then we have the following
equalities of the generating series, 
\begin{align}\label{cor:gen}
{}{\DT}(\tau)&=
{}{\DT}(\sigma)\cdot \prod_{\begin{subarray}{c}
-(n, \beta) \in W_t, \\
t \in (0, 1).
\end{subarray}}
\exp((-1)^{n-1}n{}{N}_{n, \beta}x^n y^{\beta})^{\epsilon(t)}, \\
\label{cor:gen2}
{}{\DT}_{0}(\tau)&=
{}{\DT}_{0}(\sigma)\cdot \prod_{\begin{subarray}{c}
-(n, \beta) \in W_t, \\
t \in (0, 1).
\end{subarray}}
\exp((-1)^{n-1}n{}{N}_{n, \beta}x^n y^{\beta})^{\epsilon(t)}.
\end{align}
In particular the quotient series
$${}{\DT}'(\sigma)\cneq \frac{{}{\DT}(\sigma)}{{}{\DT}_{0}(\sigma)},$$
is well-defined and does not depend on a general point 
$\sigma \in \vV$. 
\end{itemize}
 \end{thm}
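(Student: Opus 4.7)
The plan is to reduce everything to the wall-crossing formula of Joyce-Song and Kontsevich-Soibelman in the motivic Hall algebra of $\dD_X$, combined with the Calabi-Yau-$3$ Behrend function identities cited in the footnote. The argument has three stages: construction of the invariants, wall decomposition along a good path, and integration of the Hall-algebra wall-crossing at each wall.

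First, for the construction I would use Assumption~\ref{assum} to ensure that for $v\in \Gamma$ with $\rk(v)=1$ or $v\in \Gamma_0$, the moduli stack $\mM^v(\sigma)\subset \mM_0$ is a finite-type open substack. For rank-$1$ classes $v=(-n,-\beta,1)$, the uniqueness of $\oO_X$ as the only rank-$1$ object on the semistable phase $\psi$ imposed in (\ref{phase1}) forces any $E\in \mM^v(\sigma)$ to have automorphism group $\mathbb{G}_m$, so the $\mathbb{G}_m$-rigidification is a scheme and I define $\DT_{n,\beta}(\sigma)$ as its Behrend-weighted Euler characteristic; the virtual-cycle identity (\ref{vir}) is then Behrend's formula (\ref{weight}). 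For $v\in \Gamma_0$ I follow the Joyce-Song approach: form the logarithm $\epsilon^v(\sigma)$ in the Lie subalgebra of virtually indecomposable elements of the motivic Hall algebra, and define $N_{n,\beta}$ as its image under the Joyce-Song integration map.

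Second, for a good path $\{\sigma_t\}$ connecting $\sigma$ and $\tau$, the set $\{t\in(0,1):W_t\neq \emptyset\}$ is discrete, and in each fixed degree $(n,\beta)$ only finitely many walls contribute, so the formal-series manipulations make sense in any fixed $x^n y^{\beta}$-coefficient. At each such $t$, condition (\ref{extra}) of Assumption~\ref{assum} forces $W_t$ to lie in a single ray, so the sign $\epsilon(t)$ of Definition~\ref{def:good} is well-defined and independent of the choice of $v\in W_t$. The Joyce-Song/Kontsevich-Soibelman wall-crossing formula in the Hall algebra then gives a BCH-type identity relating the $\epsilon$-elements for $\sigma_{t\pm\varepsilon}$, with iterated Lie brackets indexed by the rays through $W_t\cup \{(0,0,1)\}$. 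Since $\oO_X$ is the unique rank-$1$ generator of the critical ray and all other semistable factors lie in $\Gamma_0$, these iterated brackets collapse into a single exponential; applying the integration map turns the wall contribution into the factor $\exp((-1)^{n-1}n\,N_{n,\beta}\,x^n y^{\beta})$ raised to $\epsilon(t)$. Multiplying across all walls yields (\ref{cor:gen}) and (\ref{cor:gen2}); dividing them gives the independence of $\DT'(\sigma)$.

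The chief technical obstacle is the Behrend function identity needed to make the Joyce-Song integration map from the motivic Hall algebra of $\aA$ into the completed ring $\mathbb{Q}[[x,y]]$ a Lie algebra homomorphism. One needs both
\begin{align*}
\nu_{\mM_0}(E)=(-1)^{\chi(E,E)}
\end{align*}
and the constructible extension identity relating $\nu_{\mM_0}$ on the stack of extensions to $\nu_{\mM_0}$ on the stack of direct sums. Since $\aA$ is a tilt inside $D^b(\Coh(X))$ rather than $\Coh(X)$ itself, these cannot be deduced from the sheaf-theoretic identities of Joyce-Song, but require the derived Calabi-Yau-$3$ argument of the Behrend-Getzler work cited in the footnote. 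Once these identities are established for objects of $\aA$, everything else is a formal consequence of the Hall algebra wall-crossing manipulation outlined above.
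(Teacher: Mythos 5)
This theorem is not proved in the present paper at all: it is quoted from \cite[Theorem~8.9, Corollary~8.10]{Tcurve1}, and your outline --- motivic Hall algebra of the tilted heart, Joyce--Song integration map made into a Lie algebra homomorphism by the Calabi--Yau-3 Behrend function identities of \cite{BG}, discreteness of walls along a good path, condition (\ref{extra}) forcing $W_t$ onto a single ray so that $\epsilon(t)$ is well defined, and the collapse of the BCH expression at each wall because the rank is additive and nonnegative on the heart so at most one rank-one factor occurs in any Jordan--H\"older filtration --- is exactly the strategy of that reference. The convergence issues you wave at are what the sets $S,T$ of Assumption~\ref{assum2} and the completions $\kakkoS$, $\kakkoT$ are designed to control, but the substance is right.

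The one step that would fail as written is your construction of $\DT_{n,\beta}(\sigma)$ for \emph{arbitrary} $\sigma\in\vV$. You assert that condition (\ref{phase1}) forces every semistable $E\in\aA$ with $\cl(E)=(-n,-\beta,1)$ to have automorphism group $\mathbb{G}_m$, so that the rigidification of $\mM^{v}(\sigma)$ is a scheme. Condition (\ref{phase1}) only concerns objects of class $(0,0,1)$, and the conclusion is false precisely at the points of $\vV$ lying on a wall: since $Z(E)=Z_2(1)$ for every rank-one class, a wall point admits semistable objects $F$ of class $(-n,-\beta,0)\in W_t$ with the same phase as $\oO_X$, and then $\oO_X\oplus F$ is a strictly semistable rank-one object with automorphism group containing $\mathbb{G}_m\times\mathbb{G}_m$; hence $\mM^{v}(\sigma)$ is not of the form $[M/\mathbb{G}_m]$. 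The theorem is stated for all $\sigma\in\vV$, and the clause ``if $\mM^{v}(\sigma)$ is written as $[M/\mathbb{G}_m]$'' in (\ref{vir}) is there exactly because this can fail; the remark following Remark~\ref{rmk:HT2} records that in general $\DT_{n,\beta}(\sigma)$ must be defined, like $N_{n,\beta}$, as the integration of the Hall-algebra logarithm of the moduli stack. The fix is cheap --- apply your own $\Gamma_0$-construction uniformly to the rank-one classes as well, and then check that it specializes to the Behrend-weighted count of $M$ when the stack is a trivial $\mathbb{G}_m$-gerbe --- but without it the wall-crossing identities (\ref{cor:gen}) and (\ref{cor:gen2}) do not even have a left-hand side at non-general points, and the comparison of the two sides of a wall is carried out in the Hall algebra precisely through these logarithmic elements.
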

 
  \begin{rmk}\label{rmk:HT2}
 Suppose that $\mM^{v}(\sigma)=[M/\mathbb{G}_m]$
 where $M$ is a scheme with a trivial $\mathbb{G}_m$-action. 
 Then
there is a perfect symmetric obstruction theory 
 on $M$ by~\cite{HT2}, and hence the associated 
 virtual cycle also exists. 
 \end{rmk}
  \begin{rmk}
  If $\mM^{v}(\sigma)$ is not written as $[M/\mathbb{G}_m]$
  as in Remark~\ref{rmk:HT2},
 then the invariant $\DT_{n, \beta}(\sigma)$, 
 (resp.~$N_{n, \beta}$,) 
 is defined by the integration of the 
 logarithm of the relevant moduli stacks in the Hall-algebra. 
 See~\cite[Definition~4.11]{Tcurve1}. 
 \end{rmk}
 
 \begin{rmk}\label{rmk:N}
 As in~\cite[Proposition-Definition~5.7]{Tcurve1}, the invariant 
 $N_{n, \beta}$ does not depend on $\sigma\in \vV$.
 However it may depend on $\vV$, so we may write it as 
 $N_{n, \beta}(\vV)$. Let $\vV_1, \vV_2 
 \subset \Stab_{\Gamma_{\bullet}}(\dD_X)$ be 
 connected subsets satisfying Assumption~\ref{assum}. 
 If $\vV_1 \cup \vV_2$ is connected, then the same proof 
 of~\cite[Proposition-Definition 5.7]{Tcurve1}
 show that 
 $$N_{n, \beta}(\vV_1)=N_{n, \beta}(\vV_2),$$
 i.e. the invariant $N_{n, \beta}$ does not depend on 
 $\sigma \in \vV_1 \cup \vV_2$.  
 \end{rmk}

\subsection{Completions of $\mathbb{C}[N_{\le 1}(X)]$}
In this subsection, we discuss certain 
completions of the group ring 
$\mathbb{C}[N_{\le 1}(X)]$, in which 
the generating series ${}{\DT}(\sigma)$
and ${}{\DT}_{0}(\sigma)$ are defined. 
For subsets $S_1, S_2 \subset N_{\le 1}(X)= \mathbb{Z}\oplus 
N_1(X)$,  
we set
$$S_1+S_2 \cneq \{s_1+s_2 : s_i \in S_i \}\subset
N_{\le 1}(X).$$
The sixth condition of Assumption~\ref{assum} 
is stated as follows. 
\begin{assum}\emph{\bf{\cite[Assumption~4.4]{Tcurve1}}}
\label{assum2}
In the situation of Assumption~\ref{assum}, 
the subsets $0\in T\subset S\subset N_{\le 1}(X)$
satisfy the following conditions. 
\begin{itemize}
\item We have 
\begin{align}\label{assum:pro}
T+T\subset T, \quad S+T \subset S.
\end{align}
\item For any $x\in N_{\le 1}(X)$, 
there are only finitely many ways to write 
$x=y+z$ for $y, z\in S$. 
\item 
Let $\psi \in \mathbb{R}$ be as in (\ref{phase1})
for $\sigma \in \vV$.  
For $I=(\psi-\varepsilon, \psi+\varepsilon)$
with $0<\varepsilon \ll 1$, we have 
\begin{align}
\label{S1}
&\{(n, \beta)\in N_{\le 1}(X) : 
(-n, -\beta, 1)\in C_{\sigma}(I)\}
\subset S, \\
\label{T1}
&\{(n, \beta)\in \Gamma_{0} : 
(-n, -\beta, r)\in C_{\sigma}(I), \ 
r=0 \mbox{ or } 1\}
\subset T.
\end{align}
Here $C_{\sigma}(I)\subset \Gamma$ is defined in (\ref{Csigma}). 
\item There is a family of sets
$\{S_{\lambda}\}_{\lambda \in \Lambda}$
with $S_{\lambda}\subset S$ such that 
$S\setminus S_{\lambda}$ is a finite set and 
\begin{align*}
S_{\lambda}+T\subset S_{\lambda}, \quad
 S=\bigcup_{\lambda\in \Lambda}(S\setminus S_{\lambda}).
\end{align*}
\end{itemize}
 \end{assum}
The existence of such $S, T$ are required to 
give completions of the group ring 
$\mathbb{C}[N_{\le 1}(X)]$. 
We have the following $\mathbb{C}$-vector space, 
$$\kakkoS\cneq \left\{ f=\sum_{(n, \beta)\in S}a_{n, \beta}x^n y^{\beta}
: a_{n, \beta}\in \mathbb{C} \right\}.$$
The vector spaces $\kakkoT$, 
$\kakkoSl$ are similarly defined. 
The product on 
$\mathbb{C}[N_{\le 1}(X)]$ generalizes naturally to 
products on $\kakkoT$, 
and $\kakkoS$, $\kakkoSl$ are $\kakkoT$-modules
with $\kakkoSl \subset \kakkoS$. 
There is a topology on $\kakkoS$, induced by 
the isomorphism, 
\begin{align}\notag
\kakkoS
\cong
\lim_{\begin{subarray}{c}\longleftarrow \\
\lambda \in \Lambda
\end{subarray}}
\kakkoS/
\kakkoSl,
\end{align}
and the Euclid topology on the finite dimensional 
vector spaces $\kakkoS/\kakkoSl$. 
(cf.~\cite[Section~4]{Tcurve1}.)
For $\sigma \in \vV$ satisfying Assumption~\ref{assum},
the third condition of Assumption~\ref{assum2} yields,   
$${}{\DT}(\sigma)\in \kakkoS, \quad
{}{\DT}_0(\sigma) \in \kakkoT,$$
where ${}{\DT}(\sigma)$, ${}{\DT}_0(\sigma)$
are given in (\ref{DTse1}), (\ref{DTse2}). 
\subsection{Checking assumptions}
Let $f\colon X\to Y$ be a
flopping contraction. 
Here we state
 that the connected subsets 
$\uU_{X, B}$ and $\pU_{X/Y}$
(cf.~Definition~\ref{def:UX},
Definition~\ref{def:pU},)
satisfy Assumption~\ref{assum}. 
For $\beta \in \NE(X)$, we set 
$m(\beta)$ as follows, 
\begin{align}\label{mbeta}
m(\beta)=\inf\{\chi(\oO_C) : \dim C =1 \mbox{ with }
[C] \le \beta\}.
\end{align}
It is well-known that $m(\beta)>-\infty$. 
(cf.~\cite[Lemma~3.10]{Tolim}.)
We set $S_X$ and $T_X$ as 
\begin{align}\label{good}
 S_X&\cneq \{ (n, \beta) \in N_{\le 1}(X) : 
 \beta \ge 0, \ n\ge m(\beta)\}, \\
 \label{verygood}
 T_X& \cneq \{(n, \beta) \in \Gamma_{0} :
 \beta\ge 0, \ n\ge 0\}.
 \end{align}
\begin{prop}\label{prop:check}
For $B\in \pV(X/Y)$, 
(cf.~(\ref{pV}),)
the subset $\uU_{X, B} \subset \Stab_{\Gamma_{\bullet}}(\dD_X)$
satisfies Assumption~\ref{assum} with 
$S=S_X$ and $T=T_X$. 
\end{prop}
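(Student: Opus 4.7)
The plan is to verify the seven conditions of Assumption~\ref{assum} in turn for $\sigma_{\xi}=(Z_{\xi},\aA_{X}) \in \uU_{X,B}$, using the explicit form of $Z_{\xi}$ in (\ref{data1}) and the description (\ref{writeA}) of $\aA_{X}$. For the first condition, $Z_{\xi}(\oO_{X})=z$ with $\arg z\in(\pi/2,\pi)$ gives a phase in $(1/2,1)$, and uniqueness of $\oO_{X}$ among $\sigma_{\xi}$-semistables of class $(0,0,1)$ follows from the structure of $\aA_{X}$: an object $E\in \aA_X$ with $\rk E = 1$, $\ch_{2}(E)=0$, $\ch_{3}(E)=0$ must have $\hH^{0}(E)\cong\oO_{X}$ and vanishing $\hH^{1}(E)$. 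The condition $Z_{1,\xi}(\mathbb{H}_{1})\subset\mathbb{R}\cdot i$ is immediate from $l'\mapsto -i\omega'\cdot l'$. The key input for (\ref{extra}) is that $\dim N^{1}(X/Y)_{\mathbb{R}}=1$ by the flopping hypothesis, so $N_{1}(X/Y)_{\mathbb{R}}$ is one-dimensional; hence for $v,v'\in\Gamma_{0}=\mathbb{Z}\oplus N_{1}(X/Y)$, the ratio $\omega\cdot l/\omega\cdot l'$ is independent of $\omega\in A(X/Y)$. Since $B$ is fixed along $\uU_{X,B}$, this forces the proportionality condition $Z_{\xi}(v)\in\mathbb{R}_{>0}Z_{\xi}(v')$ to be independent of the chosen $\sigma_{\xi}\in\uU_{X,B}$.

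For the openness condition (fourth item), I would use that $E\in\mM_{0}$ lies in $\aA_{X}$ if and only if $\hH^{i}_{\Coh}(E)=0$ outside degrees $0,1$, $\hH^{0}_{\Coh}(E)$ is a successive extension of $\oO_{X}$'s, and $\hH^{1}_{\Coh}(E)\in\Coh_{\le 1}(X)$; these cohomology sheaf conditions are open in families when $\rk(v)=1$ or $v\in\Gamma_{0}$. For the fifth condition, a $\sigma_{\xi}$-semistable $E$ of such class is controlled by the support property (\ref{support}), which bounds $\ch(E)$, and standard arguments as in~\cite[Section~6]{Tcurve1} applied to the category $\dD_{X}$ give openness and finite type of $\mM^{v}(\sigma_{\xi})$.

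The sixth condition is the substantive combinatorial check. For $S_{X}$, $T_{X}$ defined in (\ref{good})--(\ref{verygood}), the closure properties $T_{X}+T_{X}\subset T_{X}$ and $S_{X}+T_{X}\subset S_{X}$ follow from $m(\beta+\beta')\ge m(\beta)+m(\beta')$ (via disjoint unions of curves). The finiteness of decompositions $x=y+z$ with $y,z\in S_{X}$ holds because $\beta_{1}+\beta_{2}=\beta$ has finitely many effective decompositions in $\NE(X)$ and then $m(\beta_{i})\le n_{i}\le n-m(\beta-\beta_{i})$ bounds $n_{i}$. For (\ref{S1}) and (\ref{T1}), given $v=(-n,-\beta,r)\in C_{\sigma_{\xi}}(I)$ for a small interval $I$ around $\psi$, the object representing $v$ lies in a narrow slice near the semistable class of $\oO_{X}$, which forces $\beta\ge 0$ (its cohomology sheaves are effective) and $n\ge m(\beta)$ by~\cite[Lemma~3.10]{Tolim}. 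The family $\{S_{\lambda}\}_{\lambda\in\Lambda}$ is obtained by excluding finite collections of classes with bounded $\beta$, analogous to~\cite[Section~4]{Tcurve1}.

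For the last condition, I note that $\uU_{X,B}$ is homeomorphic via (\ref{Uhom}) to the convex set $\{-(B+iA(X/Y))\}\times\{-iA(Y)\}\times\mathfrak{H}'$, so any two points are connected by a linear path, and generic perturbations cross each wall $\{v\in\Gamma_{0}:Z_{t}(v)\in\mathbb{R}_{>0}Z_{t}(\oO_{X})\}$ transversally, producing either (\ref{good2}) or (\ref{good1}) at each crossing; local finiteness of walls along the path follows from the support property together with the effectivity bounds established above. The main obstacle I expect is the verification of (\ref{S1}) and (\ref{T1}), where one must show that the numerical classes of objects in $\pP_{\sigma_{\xi}}(I)$ all land in $S_{X}$ and $T_{X}$: this requires explicit analysis using the hypothesis $B\in\pV(X/Y)$ to control the signs of $B\cdot C_{i}$ and $B\cdot Z_{y}$, mirroring the sign constraints used in the proof of Lemma~\ref{lem:constpair2}.
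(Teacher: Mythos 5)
Your overall plan matches the paper's: everything except the openness, boundedness, and the verification of Assumption~\ref{assum2} is routine, and the substantive work is concentrated in showing that classes realized in the slice $C_{\sigma_{\xi}}(I)$ land in $S_X$ and $T_X$. However, precisely at that step your argument has a genuine gap. You assert that for $v=(-n,-\beta,1)\in C_{\sigma_{\xi}}(I)$ one gets $n\ge m(\beta)$ ``by~\cite[Lemma~3.10]{Tolim}'', but that lemma only provides the finiteness $m(\beta)>-\infty$ used to \emph{define} $S_X$; it says nothing about objects in the slice. The actual issue is this: by Lemma~\ref{lem:filtAB} an object $E\in\aA_X$ with $\cl(E)=(-n,-\beta,1)$ sits in an exact sequence $0\to I_C\to E\to F[-1]\to 0$ with $F\in\Coh_{\le 1}(X)$, so $n=\chi(\oO_C)+\ch_3(F)$ and $\beta=[C]+\ch_2(F)$. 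Since $\ch_2(F)\ge 0$ one has $\chi(\oO_C)\ge m(\beta)$, but $\ch_3(F)$ can a priori be negative (e.g. $F=\oO_{C_i}(-2)$ has $\chi=-1$), so $n\ge m(\beta)$ does not follow from effectivity alone. The entire content of the paper's proof is to rule this out: one decomposes $F$ by the torsion pair $(\pT,\pF)$ of (\ref{pTpF}) and uses the phase constraint $E\in C_{\sigma_{\xi}}(I)$ with $I$ near $\arg z/\pi\in(1/2,1)$ twice — first to kill the $\pF$-quotient of $F$ (a nonzero quotient $E\twoheadrightarrow F_2[-1]$ would have $\arg Z_{\xi}(F_2[-1])<\pi/2<\arg Z_{\xi}(E)$), then to force $\dR f_{\ast}F$ to be $0$-dimensional — concluding $F\in\ppPPer_0(X/Y)$ and hence $\ch_3(F)=\length\dR f_{\ast}F\ge 0$. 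This is also exactly where the hypothesis $B\in\pV(X/Y)$ and the perverse t-structure enter. Your closing paragraph correctly identifies this as the main obstacle but does not carry out the argument, so the proof of (\ref{S1}) and (\ref{T1}) — and with it the proposition — is incomplete as written. (The analogous refinement for $T_X$, namely $H^1(\oO_C)=0$ when $f_{\ast}[C]=0$ so that $\chi(\oO_C)\ge 0$, is likewise missing.)

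A secondary, smaller imprecision: for the fifth condition you say the support property (\ref{support}) ``bounds $\ch(E)$'' and yields finite type. The support property controls $\lVert E\rVert$ in terms of $\lvert Z(E)\rvert$ for semistable objects, but boundedness of the moduli of semistable objects of a fixed class $v$ is a geometric boundedness statement about families of complexes; the paper reduces it to the arguments of~\cite[Section~3]{Tolim}, and some such input is needed rather than the support property alone. This part is handled by citation in the paper as well, so it is less serious than the gap above, but the justification you give is not the right one.
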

\begin{proof}
The proof will be given in Section~\ref{sec:tech}.
\end{proof}
For $p=0, -1$, let $\pE=\oO_X \oplus \pE'$ be the vector bundle on $X$ 
constructed in 
the proof of Theorem~\ref{thm:nc}. 
We denote by $r(p)$ the rank of $\pE'$. 
For $v=(n, \beta)\in N_{\le 1}(X)$, we 
set $\pchi(v)$ as 
\begin{align*}
\pchi(v)&=\int_{X} v\cdot \ch \pE^{'\vee}, \\
&=r(p)n +(-1)^{p}c_1(\lL_X)\cdot \beta, 
\end{align*}
where $\lL_X$ is a globally generated ample 
line bundle which defines $\pE$. 
(See Theorem~\ref{thm:nc}.)

For an effective class $\beta \in N_1(Y)$, 
we set $\ppm(\beta)$ as follows, 
$$\ppm(\beta)=\inf\left\{ \chi(F) : 
\begin{array}{l} F\in \Coh_{\le 1}(Y), \ [F]\le \beta, 
\mbox{ there is a }\\
\mbox{surjection of sheaves }f_{\ast}\pE^{'\vee}\twoheadrightarrow 
F. \end{array}\right\},$$
as an analogue of (\ref{mbeta}). 
The same proof of~\cite[Lemma~3.10]{Tcurve1}
shows that $\ppm(\beta)>-\infty$. 
We set $\pS_{X/Y}$ and $\pTT_{X/Y}$ to be 
\begin{align*}
\pS_{X/Y}&= \left\{ v=(n, \beta)\in N_{\le 1}(X) : 
\begin{array}{l} f_{\ast}\beta \ge 0, \ n\ge m(f_{\ast}\beta), \\
\pchi(v) \ge \ppm (r(p)f_{\ast}\beta)
\end{array} \right\}, \\
\pTT_{X/Y}&= \left\{ v=(n, \beta)\in N_{\le 1}(X) : 
\begin{array}{l} f_{\ast}\beta \ge 0, \ n\ge 0, \\
\pchi(v) \ge 0
\end{array} \right\},
\end{align*}
\begin{prop}\label{prop:check2}
(i) 
For $B\in \pV(X/Y)$, 
the subset $\uU_{X, B} \subset \Stab_{\Gamma_{\bullet}}(\dD_X)$
satisfies Assumption~\ref{assum} with 
$S=\pS_{X/Y}$ and $T=\pTT_{X/Y}$. 

(ii) The subset $\pU_{X/Y}\subset \Stab_{\Gamma_{\bullet}}(\dD_X)$
satisfies Assumption~\ref{assum} with $S=\pS_{X/Y}$ and 
$T=\pTT_{X/Y}$. 
\end{prop}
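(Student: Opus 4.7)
The plan for part (i) is to bootstrap from Proposition~\ref{prop:check}: the clauses of Assumption~\ref{assum} concerning $\oO_X$, the $Z_1$-axis condition, the ray-preservation (\ref{extra}), openness and finiteness of moduli, and the existence of good paths make no reference to $S,T$ and are already established there. What remains is Assumption~\ref{assum2} with $S=\pS_{X/Y}$ and $T=\pTT_{X/Y}$, together with the membership bounds (\ref{S1})--(\ref{T1}). The additive closures $\pTT_{X/Y}+\pTT_{X/Y}\subset\pTT_{X/Y}$ and $\pS_{X/Y}+\pTT_{X/Y}\subset\pS_{X/Y}$ reduce to additivity of $n$, $f_{\ast}\beta$, $\pchi$, combined with the weak monotonicity of $m$ and $\ppm$ (each an infimum over an enlarging set), and the finite-decomposition and cofinite-filtration axioms follow routinely from these bounds.

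The substance of part (i) is the new $\pchi$-bound in (\ref{S1}) and (\ref{T1}). For $v=(-n,-\beta,1)\in C_{\sigma_\xi}(I)$, the bounds $f_{\ast}\beta\ge 0$ and $n\ge m(f_{\ast}\beta)$ follow as in Proposition~\ref{prop:check}, since a representing object $E\in\pP(I)$ is a successive extension in $\aA_X$ of $\oO_X$ and phase-$\psi$ objects from $\Coh_{\le 1}(X)[-1]$. For the inequality $\pchi(v)\ge \ppm(r(p)f_{\ast}\beta)$, I would identify the cokernel of $E\to\oO_X$ (up to shift) with a sheaf $F\in\Coh_{\le 1}(X)$ of class $(n,\beta)$, apply $\pPhi$ to realize it as an $\pAA_Y$-module quotient of $\pAA_Y'=\pPhi(\oO_X)$ (Remark~\ref{rmk:corre}), extract a surjection $f_{\ast}\pE^{'\vee}\twoheadrightarrow G$ of coherent sheaves on $Y$ with $[G]=r(p)f_{\ast}\beta$ and $\chi(G)=\pchi(v)$, and invoke the definition of $\ppm$ to conclude.

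Part (ii) requires an independent verification of all clauses of Assumption~\ref{assum} for $\pU_{X/Y}$ with heart $\pB_{X/Y}$, parallel to Proposition~\ref{prop:check}. The phase $\oO_X\in\pP(\psi)$ with $1/2<\psi<1$ follows from $Z_\xi(\oO_X)=z_1$ together with $\arg z_1\in(\pi/2,\pi)$ and $z_1\neq -1$; uniqueness of the object of class $(0,0,1)$ is forced because any $E\in\pB_{X/Y}$ of that class sits in an exact sequence $0\to G[-1]\to E\to\oO_X\to 0$ with $G\in\ppPPer_{\le 1}(X/Y)$ of trivial numerical class, hence $G=0$ (see Remark~\ref{rmk:B}). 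The conditions on $Z_1$ and the ratio condition (\ref{extra}) are immediate, since $Z_0$ varies only by the scalar factor $z_0$ across $\pU_{X/Y}$. Openness of the object substacks $\oO bj^{v}(\pB_{X/Y})$ and finite-type of the semistable loci rest on the noetherian property in Lemma~\ref{lem:noether} and the same Artin-stack arguments as in Proposition~\ref{prop:check}. Assumption~\ref{assum2} and the bounds (\ref{S1})--(\ref{T1}) are handled exactly as in part (i); here the passage to cyclic $\pAA_Y$-modules via $\pPhi$ is direct, since rank-one semistable objects in $\pB_{X/Y}$ extend $\oO_X$ by a shifted object of $\ppPPer_{\le 1}(X/Y)$. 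Good paths between general points of $\pU_{X/Y}$ are then built from generic perturbations of straight-line segments in the parameter space.

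The principal technical obstacle is the tight $\pchi$-estimate. Unlike the $m$-bound, which is an intrinsic constraint on Euler characteristics of $1$-dimensional sheaves, $\pchi(v)\ge\ppm(r(p)f_{\ast}\beta)$ demands identifying the cokernel with a coherent quotient of $f_{\ast}\pE^{'\vee}$. This compatibility hinges on the construction of $\pE$ in Theorem~\ref{thm:nc} and the decomposition (\ref{locpro}) $\pAA_Y=\pAA_Y'\oplus\pAA_Y''$, so that the cyclic generation produced by $\pPhi$ really descends to a coherent surjection on $Y$. Once this compatibility is in place, both parts (i) and (ii) follow by the uniform argument sketched above.
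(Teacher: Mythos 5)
Your reduction for part (i) is the right one---everything in Assumption~\ref{assum} except the membership bounds of Assumption~\ref{assum2} is inherited from Proposition~\ref{prop:check}, and the crux is indeed the estimate $\pchi(v)\ge \ppm(r(p)f_{\ast}\beta)$ (together with $n\ge m(f_{\ast}\beta)$, which, note, is \emph{not} the bound $n\ge m(\beta)$ of Proposition~\ref{prop:check} and also has to be extracted from a pushforward to $Y$). But your execution of the crux step has a genuine gap. You propose to ``identify the cokernel of $E\to\oO_X$ (up to shift) with a sheaf of class $(n,\beta)$'' and realize it as a cyclic $\pAA_Y$-module. For a general rank-one object $E\in \pP(I)$ with $\sigma_{\xi}\in\uU_{X,B}$ and heart $\aA_X$, this is false: by (\ref{exA}) such an $E$ is an extension $0\to I_C\to E\to F[-1]\to 0$ in $\aA_X$, and the piece $F[-1]$ (which contributes to $(n,\beta)$) is not part of any quotient of $\oO_X$ in $\ppPPer(X/Y)$; there is no map $E\to\oO_X$ whose cone is a cyclic module of the full class $(n,\beta)$. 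The paper's argument is forced to run a two-step decomposition: first split $E$ by the torsion pair $(\pT',\pF[-1])$ on $\aA_X$ as $0\to E'\to E\to F[-1]\to 0$, then split $E'$ inside $\pB_{X/Y}$ as $0\to A\to E'\to A'[-1]\to 0$ with $A\in\ppPPer_{\ge 2}(X/Y)$, $A'\in\ppPPer_{\le 1}(X/Y)$. Only the piece $A$ is a perverse ideal sheaf $0\to A\to\oO_X\to A''\to 0$, and only $A''$ is controlled by $\ppm$ via the surjection $f_{\ast}\pE^{'\vee}\twoheadrightarrow \dR f_{\ast}(A''\otimes\pE^{'\vee})$. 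The remaining pieces $F$ and $A'$ are shown (using $E\in C_{\sigma_{\xi}}(I)$) to lie in $\ppPPer_{0}(X/Y)$ up to shift, and their contributions to $n$ and $\pchi(v)$ are nonnegative by Lemma~\ref{lem:pchi} ($\ch_3$ and $\pchi$ of an object of $\ppPPer_0(X/Y)$ are lengths of $\dR\Hom(\oO_X,-)$ and $\dR\Hom(\pE',-)$). Without isolating and separately bounding these non-cyclic pieces, the inequality $\pchi(v)\ge\ppm(r(p)f_{\ast}\beta)$ does not follow from your surjection, since your $G$ would only account for part of the class.

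The same gap recurs in part (ii): the filtration (\ref{filtA}) in $\pB_{X/Y}$ places a piece of $\ppPPer_{\le 1}(X/Y)[-1]$ \emph{below} $\oO_X$ as well as above it, and $C_{\sigma}(I)$ contains arbitrary extensions by rank-zero objects of nearby phase, so ``rank-one objects extend $\oO_X$ by a shifted object of $\ppPPer_{\le 1}(X/Y)$'' in the cyclic sense only for the special semistable objects of Proposition~\ref{prop:semi2}(ii), not for all of $\pP(I)$. Separately, for the openness and finite-type conditions in (ii), the paper does not rerun the $\aA_X$ argument directly but transports the problem through the equivalence $\pB_{X/Y}\simeq\langle \pAA_Y',\Coh_{\le 1}(\pAA_Y)\rangle_{\ex}$ of (\ref{nc}) and applies the arguments of \cite[Lemma~3.15]{Tcurve1} and \cite[Section~3]{Tolim} to the non-commutative scheme $(Y,\pAA_Y)$; your appeal to Lemma~\ref{lem:noether} alone does not yield boundedness of the semistable loci. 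The rest of your sketch for (ii) (phase of $\oO_X$, uniqueness of the class-$(0,0,1)$ object, the ratio condition, good paths) is fine.
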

\begin{proof}
The proof will be given in Section~\ref{sec:tech}. 
\end{proof}

\section{Proof of the main theorem}\label{sec:proof}
In this section, using all the notions and results in 
the previous subsections, we give a proof of 
Theorem~\ref{conj:main}. 
This section is the heart of this paper, 
which includes argument in Subsection~\ref{subsec:out}. 
Again $f\colon X\to Y$ is a flopping contraction 
from a smooth projective Calabi-Yau 3-fold $X$, and 
$\phi\colon X^{+} \dashrightarrow X$ its flop. 
\subsection{Counting invariants of rank zero objects}
Let $\uU \subset \Stab_{\Gamma_{\bullet}}(\dD_X)$ be the 
connected subset given by (\ref{connU}), and take 
$$v=(-n, -\beta, 0) \in \Gamma_{0}.$$
By Proposition~\ref{VU} (ii), 
Proposition~\ref{prop:check2}, Theorem~\ref{thm:summ} and 
Remark~\ref{rmk:N}, 
there is a counting invariant of 
$\sigma$-semistable objects of numerical type $v$, 
\begin{align}\label{Ninv}
N_{n, \beta} \in \mathbb{Q}, 
\end{align}
which does not depend on $\sigma \in \uU$. 
\begin{lem}\label{eq:N}
We have the following equality. 
$$N_{n, \beta}=N_{-n, -\beta}=N_{-n, \beta}.$$
\end{lem}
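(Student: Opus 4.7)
The plan is to prove both equalities by producing suitable autoequivalences of $\dD_X$ and verifying that they are compatible with the weak stability data in $\uU$ as well as with the virtual counts defining $N_{n,\beta}$.

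For the second equality $N_{n,\beta}=N_{-n,\beta}$, I would use the Grothendieck-Serre duality on the Calabi-Yau $3$-fold $X$. Concretely, consider the contravariant functor $\mathbb{D}=\dR\HOM(-,\oO_X)\colon \dD_X\to \dD_X^{op}$. Since $X$ is CY$3$, $\mathbb{D}^2\cong \id$, and on a pure $1$-dimensional sheaf $F$ one has $\mathbb{D}(F)=\EXT^2(F,\oO_X)[-2]$, so that $\mathbb{D}(F[-1])=\EXT^2(F,\oO_X)[-1]\in\Coh_{\le 1}(X)[-1]\subset \aA_X$ and also $\mathbb{D}(\oO_X)\cong \oO_X$. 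A Riemann-Roch calculation gives $\chi(\EXT^2(F,\oO_X))=-\chi(F)$ and $[\EXT^2(F,\oO_X)]=[F]$, so that $\mathbb{D}$ induces an anti-involution of the rank-$0$ part of $\aA_X$ sending a class $(-n,-\beta,0)$ to $(n,-\beta,0)$. The key step is to check that $\mathbb{D}$ preserves $\sigma$-semistability for $\sigma \in \uU_X$: a short exact sequence $0\to F' \to F \to F''\to 0$ in $\Coh_{\le 1}(X)$ dualizes to $0\to \EXT^2(F'',\oO_X)\to \EXT^2(F,\oO_X)\to \EXT^2(F',\oO_X)\to 0$, and the slope inequality $\arg Z_\xi(F'[-1])\le \arg Z_\xi(F[-1])$ translates, under $\chi\mapsto -\chi$ and the swap of sub and quotient, into the corresponding slope inequality for the dual sheaves. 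Because $\mathbb{D}$ is an anti-equivalence of a CY$3$ category, it preserves the Behrend function and the symmetric perfect obstruction theory on the moduli of semistables, and so yields the desired identity of virtual counts $N_{n,\beta}=N_{-n,\beta}$.

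For the first equality $N_{n,\beta}=N_{-n,-\beta}$, I would use the shift $[1]\colon \dD_X\to\dD_X$, an autoequivalence that negates numerical classes and shifts phases by $1$. Although $[1]$ does not preserve the heart $\aA_X$, the rank-$0$ counting invariants $N_v$ constructed in \cite[Definition~8.5]{Tcurve1} via the logarithm in the Behrend/Hall algebra are intrinsic to the pair $(v,\sigma)$ and are unchanged under the shift autoequivalence; concretely, the isomorphism $\mM^v(\sigma)\cong \mM^{-v}(\sigma)$ induced by $[1]$ (in the ambient triangulated category, modulo the heart used to represent them) identifies the corresponding Hall-algebra elements and hence their logarithms. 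Combining this intrinsic symmetry with the $\sigma$-independence of $N_v$ on $\uU$ recorded in Remark~\ref{rmk:N}, one obtains $N_v=N_{-v}$, which unpacks to $N_{n,\beta}=N_{-n,-\beta}$. The two equalities together give the full chain claimed in the lemma; the main obstacle is the careful verification that both $\mathbb{D}$ and the shift $[1]$ interact correctly with the wall-crossing framework of~\cite[Section~8]{Tcurve1} (in particular with the Behrend function and with the possible presence of strictly semistable objects at walls inside $\uU$), so that the bijections of moduli truly descend to equalities of the $\mathbb{Q}$-valued invariants $N_{n,\beta}$.
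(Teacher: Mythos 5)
Your treatment of the second equality is essentially the paper's: both arguments rest on the anti-equivalence $\dR \hH om(-,\oO_X)$ of $\dD_X$, which carries rank-zero objects of class $(-n,-\beta,0)$ to objects of class $(n,-\beta,0)$ and identifies the relevant moduli stacks together with their Behrend-weighted counts. One caveat on your semistability check: for $\cl(F[-1])=(-n,-\beta,0)$ one has $Z_{\xi}(F[-1])=-n+B\beta+i\omega\beta$, and replacing $n$ by $-n$ while fixing $\beta$ reflects the central charge across the imaginary axis only when $B=0$; for $B\neq 0$ dualization exchanges semistability for $(B,\omega)$ with semistability for $(-B,\omega)$, so the inequality does not ``translate'' verbatim at a general point of $\uU_X$. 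This is harmless because $N_{n,\beta}$ is independent of the point of $\uU$ (Remark~\ref{rmk:N}), and the paper accordingly specializes to $\xi=(1,-i\omega,-i\omega',z)$, where $Z_{\xi}$-semistability of $E\in\aA_X$ is equivalent to $\omega$-Gieseker semistability of the sheaf $E[1]$ and the duality statement is the standard one for Gieseker-semistable $1$-dimensional sheaves; you should make this reduction explicit. For the first equality the paper simply observes that $N_{-n,-\beta}$ is \emph{defined} to equal $N_{n,\beta}$ in \cite[Definitions~8.1 and 8.5]{Tcurve1}, so nothing needs proof there. Your shift-functor argument is the heuristic behind that convention, but as written it is not rigorous: $[1]$ does not preserve the heart $\aA_X$, hence induces no morphism between the stacks $\mM^{v}(\sigma)\subset \oO bj(\aA_X)$ and $\mM^{-v}(\sigma)\subset \oO bj(\aA_X)$ nor between the corresponding Hall-algebra elements, and the phrase ``modulo the heart used to represent them'' is exactly the point that would have to be made precise. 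Replacing that paragraph by an appeal to the definition closes the gap.
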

\begin{proof}
The first equality is just the definition of 
$N_{-n, -\beta}$. (cf.~\cite[Proposition-Definition~5.7]{Tcurve1}.) 
In order to show $N_{n, \beta}=N_{-n, \beta}$, 
take a data $\xi=(1, -i\omega, -i\omega', z)$ as in (\ref{data1}) 
with $B=0$. 
Then it is easy to see that 
an object $E\in \aA_{X}$ is $Z_{\xi}$-semistable 
if and only if $E[1]\in \Coh_{\le 1}(X)$
 is a $\omega$-Gieseker semistable sheaf. 
Therefore the dualizing functor 
$$\mM^{(-n, -\beta, 0)}(\sigma_{\xi}) \ni E \mapsto 
\dR \hH om(E, \oO_X) \in \mM^{(-n, \beta, 0)}(\sigma_{\xi}), $$
is an isomorphism, hence $N_{n, \beta}=N_{-n, \beta}$ holds. 
(See~\cite[Lemma~4.3]{Tolim2}.)
\end{proof}
\begin{rmk}
By the proof of Lemma~\ref{eq:N}, 
the invariant (\ref{Ninv}) coincides with Joyce-Song's
generalized DT-invariant~\cite{JS}, which counts $\omega$-Gieseker
semistable sheaves $E \in \Coh_{\le 1}(X)$, 
satisfying $[E]=\beta, \chi(E)=n$. 
Since Conjecture~\ref{conj:BG} holds 
for $E \in \Coh_{\le 1}(X)$, 
(cf.~\cite[Theorem~5.5]{JS},)
 or $E \in \pPPer_{0}(X/Y)$, 
(since $\pPPer_0(X/Y)$ is equivalent to 
the category of reprensentations of a quiver 
with a potential,)
we can easily see that 
(\ref{Ninv}) does not depend on $\sigma$
without assuming Conjecture~\ref{conj:BG}. 
\end{rmk}
\subsection{Semistable objects of rank one}
Let $\sigma=(Z, \pP) \in \uU \subset \Stab_{\Gamma_{\bullet}}(\dD_X)$
be as in the previous subsection, and take 
$$v=(-n, -\beta, 1) \in \Gamma.$$
In this subsection, we investigate the
moduli stack of $\sigma$-semistable objects, 
$$\mM^v(\sigma)\subset \oO bj(\aA), $$
which is algebraic by Proposition~\ref{prop:check} and 
Proposition~\ref{prop:check2}. 
We first note the following lemma, 
which follows from (\ref{writeA}) and (\ref{pB})
immediately. 
\begin{lem}\label{lem:filtAB}
For $E\in \aA_X$ (resp.~$E\in \pB_{X/Y})$
satisfying $\rk(E)=1$, there is a filtration 
in $\aA_X$, (resp.~$\pB_{X/Y})$,)
\begin{align}\label{filtA}
0=E_{-1} \subset E_0 \subset E_1 \subset E_2=E, 
\end{align}
such that each subquotient 
$F_i=E_i/E_{i-1}$ satisfies 
$$F_0, F_2 \in \Coh_{\le 1}(X)[-1],
 (\mbox{resp.}~\ppPPer_{\le 1}(X/Y)[-1],)
\quad 
F_1=\oO_X.$$
In particular if $E\in \aA_X$, there is an exact 
sequence in $\aA_X$, 
\begin{align}\label{exA}
0 \lr I_C \lr E \lr F[-1] \lr 0, 
\end{align}
where $C\subset X$ is a 1-dimensional subscheme 
with the defining ideal $I_C\subset \oO_X$, 
and $F\in \Coh_{\le 1}(X)$. 
\end{lem}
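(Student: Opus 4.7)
My plan rests on the generator description $\aA_X=\langle \oO_X,\Coh_{\le 1}(X)[-1]\rangle_{\ex}$ and its perverse analog $\pB_{X/Y}=\langle \oO_X,\ppPPer_{\le 1}(X/Y)[-1]\rangle_{\ex}$, together with the noetherian property from Lemma~\ref{lem:noether} and the additivity of $\rk$. The key structural observations are that $\rk(\oO_X)=1$ and $\rk(F[-1])=0$ force any rank-one object to contain exactly one copy of $\oO_X$ in any generator-level extension series, and that any rank-zero object of $\aA_X$ must itself lie in $\Coh_{\le 1}(X)[-1]$, since it can only be built by iterated extensions of $F[-1]$'s and $\Coh_{\le 1}(X)[-1]$ is closed under extensions in $\aA_X$.

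I would then prove the filtration by induction on the length of a generator-level extension series, which is finite and bounded thanks to noetherianness. For the inductive step, write $E$ as a short exact sequence $0\to A\to E\to B\to 0$ in $\aA_X$ with $A$ and $B$ of strictly smaller generator-length. Rank additivity gives $(\rk A,\rk B)\in\{(1,0),(0,1)\}$, and by the observation above the rank-zero piece lies in $\Coh_{\le 1}(X)[-1]$. In the case $(\rk A,\rk B)=(1,0)$ one appends $B\in \Coh_{\le 1}(X)[-1]$ to the inductive filtration of $A$. In the case $(\rk A,\rk B)=(0,1)$ one pulls back the inductive filtration $0\subset B_0\subset B_1\subset B$ along $E\twoheadrightarrow B$ to $0\subset E_0\subset E_1\subset E$: the bottom $E_0$ is then an extension of $B_0$ by $A$, both in $\Coh_{\le 1}(X)[-1]$, hence itself in $\Coh_{\le 1}(X)[-1]$; the middle subquotient $E_1/E_0=B_1/B_0=\oO_X$; and the top $E/E_1=B/B_1\in \Coh_{\le 1}(X)[-1]$.

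To extract the sequence~(\ref{exA}), write the bottom layer of the resulting filtration as $F_0=G_0[-1]$ with $G_0\in \Coh_{\le 1}(X)$. The short exact sequence $0\to G_0[-1]\to E_1\to \oO_X\to 0$ in $\aA_X$ is classified by a morphism $\oO_X\to G_0$ in $D^b(\Coh(X))$, and unwinding the corresponding cone identifies $E_1$ with a two-term complex whose degree-zero cohomology is the ideal sheaf $I_C$ of the scheme-theoretic image $C$ of $\oO_X\to G_0$; in particular $\dim C\le 1$. Combined with $E/E_1\in \Coh_{\le 1}(X)[-1]$ this gives $\hH^0(E)=\hH^0(E_1)=I_C$ and $\hH^1(E)\in \Coh_{\le 1}(X)$. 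The canonical truncation triangle $\hH^0(E)\to E\to \hH^1(E)[-1]$ in $D^b(\Coh(X))$ is then a short exact sequence inside $\aA_X$, because $I_C$ lies in $\aA_X$ via $0\to \oO_C[-1]\to I_C\to \oO_X\to 0$ and $F\cneq \hH^1(E)\in \Coh_{\le 1}(X)$; this is precisely~(\ref{exA}).

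The $\pB_{X/Y}$ case proceeds verbatim, with the $p$-th perverse t-structure of $\ppPPer(X/Y)$ replacing the standard one and $\ppPPer_{\le 1}(X/Y)[-1]$ replacing $\Coh_{\le 1}(X)[-1]$ throughout; the role of $I_C$ is played by a perverse ideal sheaf in the sense of Section~\ref{subsec:Non}, extracted as the degree-zero perverse cohomology of $E_1$. I expect the main obstacle to be the identification $\hH^0(E_1)=I_C$ in the cone computation, which demands that the ``kernel of $\oO_X\to G_0$ in the derived sense'' be an ideal sheaf of a subscheme of dimension at most one, rather than a more general coherent sheaf; this should follow from the vanishing of negative Ext's between $\oO_X$ and $\Coh_{\le 1}(X)$, together with the fact that $I_C\in \aA_X$ forces $\oO_C\in \Coh_{\le 1}(X)$ via the triangle $I_C\to \oO_X\to \oO_C$.
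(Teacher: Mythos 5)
Your argument is correct and is exactly the unwinding of what the paper treats as immediate from the generator descriptions (\ref{writeA}) and (\ref{pB}): induction on the length of an extension series using additivity of $\rk$, the fact that rank-zero objects lie in $\Coh_{\le 1}(X)[-1]$ (resp.\ $\ppPPer_{\le 1}(X/Y)[-1]$), and the identification of the extension $0\to G_0[-1]\to E_1\to\oO_X\to 0$ with the cone of a sheaf map $s\colon\oO_X\to G_0$, whose kernel is an ideal sheaf $I_C$ with $\dim C\le 1$ because $\oO_X/I_C$ injects into $G_0$. The paper gives no further details, so there is nothing to compare beyond noting that your write-up correctly fills in the steps the author leaves implicit.
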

Let us fix 
\begin{align}\label{fixdata}
B\in \pV(X/Y), \quad \omega' \in A(Y), \quad
z\in \mathfrak{H} \mbox{ with }\arg z\in (\pi/2, \pi),
\end{align}
and deform $\omega=tH$ with $t\in \mathbb{R}_{>0}$. 
Here $H\in A(X/Y)$ is an ample generator given in (\ref{N^1}).
We obtain a 1-parameter family of 
weak stability conditions 
\begin{align}\label{Ufam}
\sigma_{\xi(t)}=(Z_{\xi(t)}, \aA_{X}) \in \uU_{X, B}, 
\end{align}
(cf.~Definition~\ref{def:UX},)
where $\xi(t)$ is 
\begin{align}\label{xit}
\xi(t)=(1, -(B+itH), -i\omega', z),
\end{align}
which is a family of data (\ref{data1}). 
\begin{prop}\label{prop:semi1}
For a fixed $v=(-n, -\beta, 1) \in \Gamma$
and the data (\ref{fixdata}),  
there is $t_0 \in \mathbb{R}$ such 
that for $t>t_0$, we have
\begin{align*}
\mM^{v}(\sigma_{\xi(t)})=[P_n(X, \beta)/\mathbb{G}_m],
\end{align*}
where $\mathbb{G}_m$ acts on $P_n(X, \beta)$ 
trivially. 
In particular,
the following holds in $\mathbb{C}\db[S_{X}\db]$,
$$\lim_{t\to \infty}\DT(\sigma_{\xi(t)})=\PT(X).$$
\end{prop}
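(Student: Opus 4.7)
The plan is to show that for each fixed $v = (-n, -\beta, 1) \in \Gamma$, once $t$ is sufficiently large the $\sigma_{\xi(t)}$-semistable objects $E \in \aA_X$ of class $v$ are exactly stable pair complexes $I^{\bullet} = [\oO_X \stackrel{s}{\to} F]$, and then to upgrade this $\mathbb{C}$-point identification to the stack isomorphism $\mM^{v}(\sigma_{\xi(t)}) \cong [P_n(X,\beta)/\mathbb{G}_m]$. The first step is to apply Lemma~\ref{lem:filtAB} to express any rank-one $E \in \aA_X$ as an extension
$$0 \to I_C \to E \to Q[-1] \to 0$$
in $\aA_X$, with $I_C \subset \oO_X$ an ideal sheaf (possibly $I_C = \oO_X$) and $Q \in \Coh_{\le 1}(X)$. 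Using the definition of $Z_{\xi(t)}$ with $\omega = tH$, one computes phases: $\arg Z_{\xi(t)}(E) = \arg Z_{\xi(t)}(I_C) = \arg z \in (\pi/2, \pi)$, whereas a rank-zero object $G[-1]$ with $\cl(G) = (m, \gamma, 0)$ has phase $\pi$ when $\gamma = 0$, phase $\pi/2$ when $f_{\ast}\gamma \ne 0$, and a phase in $(\pi/2, \pi)$ tending to $\pi/2^{+}$ as $t \to \infty$ when $\gamma \ne 0$ with $f_{\ast}\gamma = 0$.

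These phase computations pin down semistability. Any $\aA_X$-subobject $T[-1] \hookrightarrow E$ with $T \in \Coh_{0}(X)$ has phase $\pi > \arg z$ and so is prohibited; and for $t$ large, any quotient $E \twoheadrightarrow G[-1]$ with $G$ of pure dimension one has phase strictly less than $\arg z$, so $Q = \hH^{1}(E)$, appearing as the quotient $Q[-1]$, must lie in $\Coh_{0}(X)$. Now for $Q$ zero-dimensional, Serre duality with $K_X \cong \oO_X$ gives $\Ext^{1}(Q, \oO_X) = \Ext^{2}(Q, \oO_X) = 0$, and the long exact sequence attached to $0 \to I_C \to \oO_X \to \oO_C \to 0$ then yields an isomorphism $\Ext^{2}(Q, I_C) \cong \Ext^{1}(Q, \oO_C)$. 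The extension class of $E$ on the left corresponds to an extension $0 \to \oO_C \to F \to Q \to 0$ on the right, and the composite $s \colon \oO_X \twoheadrightarrow \oO_C \hookrightarrow F$ realises $E$ as $[\oO_X \stackrel{s}{\to} F]$ with $\mathrm{coker}(s) = Q$. A parallel $\Ext$-computation using the triangle $E \to \oO_X \to F$ together with the above vanishings gives $\Hom_{\aA_X}(T[-1], E) = \Hom(T, F)$ for $T \in \Coh_{0}(X)$, so the prohibition of zero-dimensional subobjects of $E$ is equivalent to $F$ being pure of dimension one. Together with $\mathrm{coker}(s) = Q$ being zero-dimensional this is exactly the stable pair condition. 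The converse, that every stable pair is $\sigma_{\xi(t)}$-semistable for $t \gg 0$, follows by symmetry: rank-one subobjects of $I^{\bullet}$ share its phase, and rank-zero subobjects $H[-1]$ must have $H$ pure of dimension one by the same $\Hom$-computation, hence phase $<\arg z$ for $t$ large.

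The main technical obstacle is making the threshold $t_0$ uniform over the moduli; this uses that by Proposition~\ref{prop:check} the stack $\mM^{v}(\sigma_{\xi(t)})$ is of finite type, so the classes of potential rank-zero sub- and quotient objects lie in a finite subset of $N_1(X)$ and a single $t_0 = t_0(v)$ suffices. For $t > t_0(v)$, the $\mathbb{C}$-point identification, combined with the scalar $\mathbb{G}_m$-automorphisms on complexes and the fact that $P_n(X,\beta)$ is a fine projective moduli, upgrades to the stack isomorphism $\mM^{v}(\sigma_{\xi(t)}) \cong [P_n(X,\beta)/\mathbb{G}_m]$ with trivial $\mathbb{G}_m$-action, whence (\ref{vir}) in Theorem~\ref{thm:summ} gives $\DT_{n,\beta}(\sigma_{\xi(t)}) = P_{n,\beta}$. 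Finally, for each $\lambda \in \Lambda$ only finitely many classes contribute to the finite-dimensional quotient $\mathbb{C}\db[S_X\db]/\mathbb{C}\db[S_{X,\lambda}\db]$; taking $t$ larger than the maximum of the corresponding $t_0(v)$ gives $\DT(\sigma_{\xi(t)}) \equiv \PT(X)$ in that quotient, which is the claimed limit in $\mathbb{C}\db[S_X\db]$.
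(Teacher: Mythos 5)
Your overall strategy matches the paper's: decompose a rank-one $E\in\aA_X$ via Lemma~\ref{lem:filtAB} as $0\to I_C\to E\to Q[-1]\to 0$, show that for $t\gg 0$ semistability forces $Q$ to be $0$-dimensional and then identify $E$ with a stable pair. Your explicit $\Ext$-computation (Serre duality killing $\Ext^{1}(Q,\oO_X)$ and $\Ext^{2}(Q,\oO_X)$, the isomorphism $\Ext^{2}(Q,I_C)\cong\Ext^{1}(Q,\oO_C)$, and $\Hom_{\aA_X}(T[-1],E)\cong\Hom(T,F)$) is a legitimate self-contained substitute for the paper's citation of~\cite[Lemma~3.11]{Tcurve1} at that step, and the converse direction is handled the same way in both arguments.

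There is, however, a genuine gap in how you obtain the uniform threshold $t_0$. You invoke Proposition~\ref{prop:check} to say that $\mM^{v}(\sigma_{\xi(t)})$ is of finite type and hence the classes of potential destabilizers are finite. This is circular (the structure of $\mM^{v}(\sigma_{\xi(t)})$ is precisely what is being determined) and, more importantly, finite-typeness holds only for each fixed $t$, so it does not by itself produce a bound independent of $t$; moreover you only control the curve class in $N_1(X)$, whereas the quantity that must be bounded is $\ch_3(Q)$, which does not live in $N_1(X)$. The point is that the phase of a quotient $Q[-1]$ with $f_{\ast}\ch_2(Q)=0$ stays $\ge\arg z$ only if $\bigl(-\ch_3(Q)+B\ch_2(Q)\bigr)/\bigl(tH\cdot\ch_2(Q)\bigr)\le \Ree z/\Imm z<0$, and to rule this out for all $t>t_0$ one needs an \emph{a priori upper bound} on $\ch_3(Q)$ together with $0<H\cdot\ch_2(Q)\le H\cdot\beta$. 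The paper gets this from $\ch_3(Q)=n-\chi(\oO_C)\le n-m(\beta)$, where $m(\beta)>-\infty$ is the bound (\ref{mbeta}) on Euler characteristics of curves of class $\le\beta$; the same input is needed (and the same fix applies) for the rank-zero subobjects $G[-1]$ in the converse direction. Your filtration already contains the ideal sheaf $I_C$ with $[C]\le\beta$, so the repair is immediate, but as written the uniformity step does not go through.
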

\begin{proof}
Take a $\sigma_{\xi(t)}$-semistable object $E\in \aA_X$
with $\cl(E)=(-n, -\beta, 1)$. 
Let 
$$0 \to I_C \to E \to F[-1] \to 0,$$
be 
an exact sequence as in (\ref{exA}). 
We have
\begin{align}\label{ch_3}
\ch_3(F) \le n-m(\beta),
\end{align}
where $m(\beta)$ is defined by (\ref{mbeta}).
Suppose that the support of $F$ is 1-dimensional. 
The $\sigma_{\xi(t)}$-semistability of $E$ yields,
$$\arg Z_{\xi(t)}(F[-1])\ge \arg Z_{\xi(t)}(E)=\arg z>\pi/2.$$
Hence we have $f_{\ast}\ch_2(F)=0$ and 
\begin{align}\label{-ch}
\frac{-\ch_3(F)+B\ch_2(F)}{tH\cdot\ch_2(F)} 
\le c<0, 
\end{align}
for $c=\Ree z/\Imm z$. The inequalities 
(\ref{ch_3}) and (\ref{-ch}) 
imply
\begin{align}\label{tC}
t\le \frac{-n+m(\beta)}{cH\cdot\ch_2(F)}+\frac{b}{c},
\end{align}
where $B=bH$ for $b\in \mathbb{R}$. 
Since $0<H\cdot \ch_2(F)\le H\cdot \beta$, 
there is $t_0>0$ 
(depending only on $v$, $B$ and $z$,)
such that (\ref{tC}) implies 
$t\le t_0$. Therefore if we take $t>t_0$, the sheaf
 $F$ must be 0-dimensional. 
 Also we have $\Hom(\oO_x[-1], E)=0$ for any 
 closed point $x\in X$, since 
 $\oO_x[-1]$ is $\sigma_{\xi(t)}$-stable with 
 $$\pi=\arg Z_{\xi(t)}(\oO_x[-1])>\arg Z_{\xi(t)}(E)
 =\arg z.$$
 Then we apply~\cite[Lemma~3.11]{Tcurve1} 
 and conclude that $E$ is a stable pair~(\ref{twoterm}). 
 
 Conversely take a stable pair 
$E=(\oO_X \to F) \in \aA_X$
with $[F]=\beta$, $\chi(F)=n$, 
and an exact sequence in $\aA_X$, 
$$0 \lr A \lr E \lr B \lr 0.$$
Since there is a surjection of sheaves 
$\hH^1(E) \twoheadrightarrow \hH^1(B)$
and $\hH^1(E)$ is 0-dimensional, 
the sheaf $\hH^1(B)$ is also 0-dimensional. 
If $\rk(B)=0$, then $B=Q[-1]$ for a
0-dimensional sheaf $Q$. Hence the inequality 
$$\arg z=\arg Z_{\xi(t)}(E)<\arg Z_{\xi(t)}(B)=\pi$$
is satisfied. If $\rk(B)\neq 0$, we have 
$\rk(B)=1$ and $\rk(A)=0$. 
By the exact sequence (\ref{exA})
applied for $B\in \aA_X$,  
we see that
 $\ch_3(B)\le -m(\beta)$.
Hence
 we have $A=G[-1]$ for $G\in \Coh_{\le 1}(X)$
  with 
 $$\ch_3(G) \le n-m(\beta).$$
Therefore we have 
 \begin{align}\label{argin}
 \frac{-\ch_3(G)+B\ch_2(G)}{tH\cdot \ch_2(G)} 
 \ge \frac{-n+m(\beta)}{tH\cdot\ch_2(F)}+\frac{b}{t},
 \end{align}
 where $B=bH$ with $b\in \mathbb{R}$. 
 Hence
 there is $t_0>0$ such that 
 the RHS of 
 $(\ref{argin})$ is bigger than $c=\Ree z/\Imm z<0$
 for $t>t_0$, i.e. 
 $$\arg Z_{\xi(t)}(G[-1])<\arg Z_{\xi(t)}(E)=\arg z,$$
  for $t>t_0$.
\end{proof}

Next let us take $\xi=(-z_0, z_0 B, -i\omega', z_1)$
as in (\ref{element2}), and the associated
weak stability condition $\sigma_{\xi}=(Z_{\xi}, \pB_{X/Y})
 \in \pU_{X/Y}$.
(cf.~Definition~\ref{def:pU}.) 
We have the following proposition. 
\begin{prop}\label{prop:semi2}
(i) Suppose that $\arg z_0> \arg z_1$. 
Then for 
$v=(-n, -\beta, 1) \in \Gamma$ with 
$(n, \beta)\in \Gamma_0$, we have 
$$\mM^{v}(\sigma_{\xi})=\left\{\begin{array}{cc}
[\Spec\mathbb{C}/\mathbb{G}_m], & n=\beta=0, \\
\emptyset, & \mbox{\rm{otherwise}}.
\end{array}\right.$$
In particular, we have 
$$\DT_{0}(\sigma_{\xi})=1.$$
(ii) Suppose that $\arg z_0<\arg z_1$. 
Then for $v=(-n, -\beta, 1) \in \Gamma$, we have 
$$\mM^{v}(\sigma_{\xi})
=[I_n(\pAA_{Y}, \beta)/\mathbb{G}_m],$$
where $\mathbb{G}_m$ acts on $I_n(\pAA_Y, \beta)$ trivially. 
In particular, we have 
$$\DT(\sigma_{\xi})=\DT(\pAA_Y).$$
\end{prop}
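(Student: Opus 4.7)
The strategy is in both cases to apply the filtration of Lemma~\ref{lem:filtAB}: for $E\in \pB_{X/Y}$ a $\sigma_\xi$-semistable object with $\cl(E)=(-n,-\beta,1)$, write $0\subset E_0\subset E_1\subset E$ with $E_1/E_0=\oO_X$ and $F_0:=E_0,\ F_2:=E/E_1\in \ppPPer_{\le 1}(X/Y)[-1]$. Since $\cl(E)\in \Gamma_2\setminus \Gamma_1$ one has $Z_\xi(E)=z_1$, so $\arg Z_\xi(E)=\arg z_1$. The phase calculations at the generators of $\pB_{X/Y}$ carried out in the proof of Lemma~\ref{lem:constpair2} show that a non-zero $G[-1]\in \pB_{X/Y}$ with $G\in \ppPPer_{\le 1}(X/Y)$ has $\arg Z_\xi(G[-1])=\pi/2$ when $\cl(G[-1])\in \Gamma_1\setminus \Gamma_0$ (and then $\omega'\cdot f_\ast[G]>0$), and $\arg Z_\xi(G[-1])=\arg z_0$ when $\cl(G[-1])\in \Gamma_0$.

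For part~(i), with $f_\ast\beta=0$ and $\arg z_0>\arg z_1$, I project the identity $\cl(F_0)+\cl(F_2)=(-n,-\beta,0)$ to $\mathbb{H}_1=N_1(Y)$ and pair with $\omega'$; the positivity noted above forces both $\cl(F_0),\cl(F_2)\in \Gamma_0$. A non-zero $F_0$ would then be a subobject of phase $\arg z_0>\arg z_1$, contradicting semistability, so $F_0=0$ and $\oO_X\hookrightarrow E$ in $\pB_{X/Y}$. Next, inside the quasi-abelian slice $\pP(\arg z_1)$ a Jordan--H\"older filtration decomposes $E$ into $\sigma_\xi$-stable factors of that phase; every such factor must have positive rank because every rank-zero object of $\pB_{X/Y}$ carries phase $\pi/2$ or $\arg z_0$, neither equal to $\arg z_1$. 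Since $\rk(E)=1$, $E$ admits a unique JH factor and is itself $\sigma_\xi$-stable. Stability applied to the same-phase inclusion $\oO_X\subseteq E$ then forces $E=\oO_X$ and $(n,\beta)=(0,0)$. The first axiom of Assumption~\ref{assum} identifies $\mM^{(0,0,1)}(\sigma_\xi)=[\Spec\mathbb{C}/\mathbb{G}_m]$, so (\ref{vir}) gives $\DT_{0,0}(\sigma_\xi)=1$ and hence $\DT_0(\sigma_\xi)=1$.

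For part~(ii), with $\arg z_0<\arg z_1$, a non-zero $F_2$ would satisfy $\arg Z_\xi(F_2)\in \{\pi/2,\arg z_0\}<\arg z_1$, destabilizing the quotient $E\twoheadrightarrow F_2$; hence $F_2=0$ and $E$ sits in an exact sequence $0\to F[-1]\to E\to \oO_X\to 0$ in $\pB_{X/Y}$ with $F:=F_0[1]\in \ppPPer_{\le 1}(X/Y)$. Rotating the corresponding triangle identifies $E$ as the kernel of a surjection $\oO_X\twoheadrightarrow F$ in $\ppPPer(X/Y)$, so $E$ is a perverse ideal sheaf with $[F]=\beta$ and $\chi(F)=n$, i.e.\ a point of $I_n(\pAA_Y,\beta)$. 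Conversely, any $I\in I_n(\pAA_Y,\beta)$ is a subobject of the torsion-free $\oO_X\in \ppPPer_{\ge 2}(X/Y)$ and so itself lies in $\ppPPer_{\ge 2}(X/Y)\subset \ppPPer^{\dag}(X/Y)[-1]$, placing $I$ in $\pB_{X/Y}$. Its rank-zero subobjects have phase $<\arg z_1$ and so do not destabilize, while any putative rank-zero quotient $I\twoheadrightarrow G[-1]$ with $G\in \ppPPer_{\le 1}(X/Y)$ would give a non-zero element of $\Hom_{\dD_X}(I,G[-1])=\Hom^{-1}_{\ppPPer(X/Y)}(I,G)$, which vanishes because $\ppPPer(X/Y)$ is an abelian heart; hence $I$ is $\sigma_\xi$-semistable. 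Since $\Aut(I)=\mathbb{G}_m$ acts trivially, $\mM^v(\sigma_\xi)=[I_n(\pAA_Y,\beta)/\mathbb{G}_m]$, and summing produces $\DT(\sigma_\xi)=\DT(\pAA_Y)$.

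The main obstacle is the ``rank versus phase'' bookkeeping that drives both parts --- verifying the clean separation of the three possible phases $\pi/2,\arg z_0,\arg z_1$ on $\pB_{X/Y}$, and in particular that every rank-one object has phase exactly $\arg z_1$ while every rank-zero object strictly avoids it --- together with the Jordan--H\"older upgrade of semistability to stability in~(i) within the weak-stability-condition framework of~\cite[Section~2]{Tcurve1}. Once these ingredients are in place the moduli-theoretic identifications with $\{\oO_X\}$ and with $I_n(\pAA_Y,\beta)$, and the resulting equalities of generating series, follow from the definitions of the invariants.
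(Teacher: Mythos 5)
Your part~(ii) is essentially sound and close to the paper's argument; your use of $\Hom_{\dD_X}(I,G[-1])=\Hom^{-1}_{\ppPPer(X/Y)}(I,G)=0$ to rule out rank-zero quotients is a clean variant of the paper's long-exact-sequence argument. One small gloss: after showing $F_2=0$ you still must check that the connecting map $\oO_X\to F$ is \emph{surjective} in $\ppPPer(X/Y)$ before you may call $E$ a perverse ideal sheaf. Its cokernel is exactly $\pH^1(E)$, and $\pH^1(E)\neq 0$ is excluded by the same destabilization you already used, applied to the quotient $E\twoheadrightarrow \pH^1(E)[-1]$ of phase $\arg z_0$ or $\pi/2$, both less than $\arg z_1$.

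The real problem is the last step of part~(i). After reducing to $0\to\oO_X\to E\to F_2\to 0$ with $F_2\in\ppPPer_{0}(X/Y)[-1]$, you argue that $E$ is stable and that ``stability applied to the same-phase inclusion $\oO_X\subseteq E$ forces $E=\oO_X$.'' But the stability inequality for the subobject $\oO_X$ reads $\arg Z_{\xi}(\oO_X)<\arg Z_{\xi}(F_2)$, i.e. $\arg z_1<\arg z_0$ --- which is precisely the hypothesis of part~(i), so a hypothetical non-split extension with $F_2\neq 0$ is consistent with everything you have proved. Note that $Z_{\xi}$ is only a \emph{weak} stability function: since $Z_{\xi}(E)=z_1=Z_{\xi}(E/F)$ whenever $\rk(E/F)=1$, the usual argument that a same-phase semistable subobject of a stable object must be everything breaks down, and your Jordan--H\"older step (which is fine as far as it goes) does not exclude $F_2\neq 0$. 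The missing ingredient is the paper's Serre-duality computation $\Hom(F_2,\oO_X[1])\cong \hH^2\dR\Gamma(X,F_2)^{\vee}=0$, valid because $\dR f_{\ast}F_2[1]$ is a zero-dimensional sheaf on $Y$; this splits the sequence, so that $F_2$ becomes a \emph{subobject} of $E$ with $\arg Z_{\xi}(F_2)=\arg z_0>\arg z_1=\arg Z_{\xi}(E/F_2)$, contradicting semistability and forcing $F_2=0$. Without some such vanishing your proof of (i) does not close.
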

\begin{proof}
(i) 
Take a $\sigma_{\xi}$-semistable object $E\in \pB_{X/Y}$
with $\cl(E)=v$, and 
a filtration
$$0=E_{-1}\subset E_0 \subset E_1 \subset E_2=E, $$
in $\pB_{X/Y}$ as in (\ref{filtA}). 
For each subquotient $F_i$, the condition 
$(n, \beta)\in \Gamma_0$ implies 
$$F_0, F_2 \in \ppPPer_{0}(X/Y)[-1], \quad F_1=\oO_X.$$
(cf.~Definition~\ref{def:pp}.)
Suppose that $F_0\neq 0$. Then we have
$$\arg Z_{\xi}(F_0)=\arg z_0> \arg z_1=\arg Z_{\xi}(E),$$
which contradicts to 
the $\sigma_{\xi}$-semistability of 
$E$. Hence $F_0=0$ and we have the exact sequence 
in $\pB_{X/Y}$, 
\begin{align}\label{exsp}
0 \lr \oO_X \lr E \lr F_2 \lr 0.
\end{align}
Since 
\begin{align*}
\Hom(F_2, \oO_X[1]) &\cong \hH^2 \dR \Gamma (X, F_2)^{\vee}, \\
&=0, 
\end{align*}
by the Serre duality and the definition of $\ppPPer_{0}(X/Y)$, 
the sequence (\ref{exsp}) splits. 
Hence $E$ is $\sigma_{\xi}$-semistable 
if and only if $E\cong\oO_X$. 

(ii)
Let 
$$\pH^{i}\colon 
D^b(\Coh(X)) \lr \ppPPer(X/Y)$$
be the $i$-th cohomology 
functor with respect to the t-structure on $D^b(\Coh(X))$
with heart $\ppPPer(X/Y)$. Take 
a $\sigma_{\xi}$-semistable object 
$E\in \pB_{X/Y}$ with $\cl(E)=v$, and 
suppose that $\pH^1(E)$ is non-zero. 
We have 
the surjection in $\pB_{X/Y}$, 
$$E \twoheadrightarrow \pH^1(E)[-1], $$
and the inequality,
$$\arg Z_{\xi}(E)=\arg z_1>\arg z_0=\arg Z_{\xi}(\pH^1(E)[-1]),$$
by our choice of $\xi$. 
This contradicts to the $\sigma_{\xi}$-semistablility
of $E$, 
hence we have $\pH^1(E)=0$. 
Combined with Lemma~\ref{lem:filtAB},
the object $E$ fits into the exact sequence 
in $\ppPPer(X/Y)$, 
\begin{align}\label{ExB}
0 \to E \to \oO_X \to F \to 0,
\end{align}
with $F\in \ppPPer_{\le 1}(X/Y)$. 

On the other hand, take $E\in \ppPPer(X/Y)$ which 
fits into an exact sequence (\ref{ExB}) in $\ppPPer(X/Y)$. 
Note that we have $E\in \pB_{X/Y}$ with $\pH^1(E)=0$. 
In order to show that $E$ is $\sigma_{\xi}$-stable, 
let us take an exact sequence 
in $\pB_{X/Y}$, 
$$0 \lr E_1 \lr E \lr E_2 \lr 0,$$
such that $E_i \neq 0$ for $i=1, 2$. 
Suppose that $\rk(E_1)=1$ and
$\rk(E_2)=0$, hence 
$E_2\in \pPPer_{\le 1}(X/Y)[-1]$. 
The long exact sequence associated to 
$\pH^{\bullet}(\ast)$
together with $\pH^1(E)=0$
show that $E_2=0$. 
This is a contradiction, hence 
$\rk(E_1)=0$ and $\rk(E_2)=1$ holds. In this case, 
our choice of $\xi$ yields, 
$$\arg Z_{\xi}(E_1)=\arg z_0< 
\arg z_1=\arg Z_{\xi}(E),$$ 
which implies that $E$ is $\sigma_{\xi}$-stable. 
\end{proof}

\subsection{Local transformation formula of the
generating series}
In this subsection, we show the 
transformation formula of ${}{\DT}(X/Y)$ 
and ${}{\DT}_{0}(\pAA_Y)$. 
\begin{thm}\label{thm:form}
Assuming Conjecture~\ref{conj:BG}, 
we have the formula, 
\begin{align}
\label{form:DT}
{}{\DT}(X/Y) &= \prod_{\begin{subarray}{c}
n>0, \beta \ge 0, \\
f_{\ast}\beta=0
\end{subarray}}\exp((-1)^{n-1}n{N}_{n, \beta}
x^n y^{\beta}). 
\end{align}
In particular, we have 
\begin{align}
\label{form1}
{}{\DT}(X/Y)&=i\circ \phi_{\ast}{}{\DT}(X^{+}/Y).
\end{align}
Here the variable change is $\phi_{\ast}(n, \beta)=(n, \phi_{\ast}\beta)$
and $i(n, \beta)=(n, -\beta)$. 
\end{thm}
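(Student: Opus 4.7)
The plan is to establish (\ref{form:DT}) via the wall-crossing formula (\ref{cor:gen2}) of Theorem~\ref{thm:summ} along a good path in the connected subset $\uU \subset \Stab_{\Gamma_{\bullet}}(\dD_X)$ from (\ref{connU}), and then deduce (\ref{form1}) using the flop equivalence. First I would fix $\xi$ as in (\ref{data2}) with $\arg z_0 > \arg z_1$, so that $\sigma_\xi \in \oU_{X/Y}$ satisfies $\DT_0(\sigma_\xi) = 1$ by Proposition~\ref{prop:semi2}(i). On the other side, for $\sigma_{\xi(t)} \in \uU_{X, B}$ with $t \gg 0$ as in (\ref{Ufam}), Proposition~\ref{prop:semi1} identifies the $\sigma_{\xi(t)}$-semistable objects with classes in $\Gamma_0$ as the stable pairs whose curve class is contracted by $f$, giving $\DT_0(\sigma_{\xi(t)}) = \PT(X/Y)$.

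Next, using Proposition~\ref{VU}(ii) together with the assumption checks of Proposition~\ref{prop:check} and Proposition~\ref{prop:check2}, I would connect $\sigma_\xi$ to $\sigma_{\xi(t)}$ by a good path inside $\uU$ satisfying Assumption~\ref{assum} and apply (\ref{cor:gen2}) to obtain
$$\PT(X/Y) = \prod_{(n, \beta)} \exp\bigl((-1)^{n-1} n N_{n, \beta} x^n y^\beta\bigr)^{\epsilon(t)},$$
the product being over pairs $(n, \beta)$ such that $-(n, \beta) \in \Gamma_0$ lies on a wall crossed along the path, with sign $\epsilon(t) \in \{\pm 1\}$. The key combinatorial step is to identify this set: by analysing the phases of generators of $\aA_X$ and $\pB_{X/Y}$ listed in Remark~\ref{rmk:B}, the walls crossed are exactly those with $n > 0$, $\beta \ge 0$, $f_*\beta = 0$, and the contributions all come with $\epsilon(t) = +1$. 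Combined with Theorem~\ref{mainTcurve}, which yields $\DT(X/Y) = \DT_0(X) \cdot \PT(X/Y)$, together with the Joyce-Song expression of $\DT_0(X) = M(-x)^{\chi(X)}$ as $\prod_{n > 0} \exp((-1)^{n-1} n N_{n, 0} x^n)$ accounting for the $\beta = 0$ wall contributions, the identity (\ref{form:DT}) follows.

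Finally, for (\ref{form1}), I would exploit the connectedness of $\uU$ together with Remark~\ref{rmk:N} and Lemma~\ref{preiso} to conclude $N^{X^+}_{n, \beta^+} = N^X_{n, \phi_*\beta^+}$ for $\beta^+ \in \NE(X^+/Y)$. Applying (\ref{form:DT}) on $X^+$, then pushing forward by $\phi_*$ (which sends $\NE(X^+/Y)$ to $-\NE(X/Y)$), composing with $i$, and invoking the symmetry $N^X_{n, \beta} = N^X_{n, -\beta}$ from Lemma~\ref{eq:N}, one recovers the RHS of (\ref{form:DT}) on $X$, proving (\ref{form1}). The principal obstacle is the wall enumeration in the second paragraph: one must verify using the explicit descriptions of $\aA_X$ and $\pB_{X/Y}$ that only walls in the range $n > 0$, $\beta \ge 0$, $f_*\beta = 0$ are crossed with the correct signs, and that each such wall contributes exactly once; this requires careful bookkeeping of how the heart changes as one moves through $\uU_{X, B}$, across $\omV_{X/Y}$, and into $\oU_{X/Y}$.
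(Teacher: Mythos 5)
Your proposal follows the same route as the paper: the two endpoints of the wall-crossing are computed by Propositions~\ref{prop:semi1} and~\ref{prop:semi2}~(i) (the paper uses the specific boundary point $\xi(0)=(1,-B,-i\omega',z)\in \Pi(\omV_{X/Y})$, which has $z_0=-1$ and hence $\arg z_0=\pi>\arg z_1$), the formula (\ref{cor:gen2}) is applied along the ray $\omega=tH$, and (\ref{form1}) is deduced from (\ref{form:DT}) via Lemma~\ref{eq:N} and the independence of $N_{n,\beta}$ of the side of the flop.

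There is, however, one bookkeeping error in your wall enumeration which, taken literally, breaks the final assembly. The walls crossed along the path are those $-(n,\beta)\in\Gamma_0$ with $n-B\beta>0$ and $\beta>0$ \emph{strictly} (becoming $n>0$, $\beta>0$ after letting $B\to 0$ in $\oV(X/Y)$): a class $-(n,0)$ has $Z_{\xi(t)}(-(n,0))=-n\in\mathbb{R}$, which never lies on the ray $\mathbb{R}_{>0}z$ because $\arg z\in(\pi/2,\pi)$ throughout, so no $\beta=0$ wall is ever crossed. If the wall product ran over $\beta\ge 0$ as you assert, then $\PT(X/Y)$ would already contain the degree-zero factors, and multiplying by $\DT_0(X)=\prod_{n>0}\exp((-1)^{n-1}nN_{n,0}x^n)$ (the identity (\ref{McMahon})) would count them twice. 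The correct assembly is: wall-crossing yields
$$\PT(X/Y)=\prod_{\begin{subarray}{c} n>0,\ \beta>0,\\ f_{\ast}\beta=0\end{subarray}}
\exp((-1)^{n-1}nN_{n,\beta}x^n y^{\beta}),$$
and the $\beta=0$ factors of (\ref{form:DT}) are supplied exactly once by $\DT(X/Y)=\DT_0(X)\cdot\PT(X/Y)$ (Theorem~\ref{mainTcurve}) together with (\ref{McMahon}). With this correction your argument coincides with the paper's proof.
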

\begin{proof}
Let us take 
$$\xi(t)=(1, -(B+itH), -i\omega', z),$$
as in (\ref{fixdata}), (\ref{xit}) and 
a 1-parameter family of weak stability
conditions 
$\sigma_{\xi(t)}\in \uU_{X, B}$
as in (\ref{Ufam}). 
By Proposition~\ref{prop:semi1},
we have 
$$\lim_{t\to \infty}{}{\DT}_{0}(\sigma_{\xi(t)})=
{}{\PT}(X/Y), $$
in the topological ring $\mathbb{C}\db[T_{X}\db]$. 
On the other hand, let us consider the element, 
$$\xi(0)=(1, -B, -i\omega', z),$$
which gives  
data (\ref{data2}) with $z_0=-1$ in the notation of (\ref{data2}).  
Note that ${}{\DT}_{0}(\sigma_{\xi(t)})$ and 
${}{\DT}_{0}(\sigma_{\xi(0)})$ are
contained in $\mathbb{C}\db[\pTT_{X/Y}\db]$
by Proposition~\ref{prop:check2}. 
Since $\sigma_{\xi(0)}$ is a general point, we have 
\begin{align*}
\lim_{t\to 0}{}{\DT}_{0}(\sigma_{\xi(t)})
&={}{\DT}_{0}(\sigma_{\xi(0)}) \\
&=1,
\end{align*}
in $\mathbb{C}\db[\pTT_{X/Y}\db]$. 
Here the second equality is due to Proposition~\ref{prop:semi2} (i). 
Therefore applying 
(\ref{cor:gen2}) in 
Theorem~\ref{thm:summ}, we have 
\begin{align}\notag
{}{\PT}(X/Y)&=\prod_{\begin{subarray}{c}
-(n, \beta)\in W_{t}, \\
0<t<\infty
\end{subarray} }
\exp((-1)^{n-1}nN_{n, \beta}x^n y^{\beta}) \\
\notag
&=\prod_{\begin{subarray}{c}
n-B\beta>0, \beta >0, \\
f_{\ast}\beta=0
\end{subarray}}\exp((-1)^{n-1}n{}{N}_{n, \beta}x^n y^{\beta}) \\
\label{Bto0}
&=\prod_{\begin{subarray}{c}
n>0, \beta >0, \\
f_{\ast}\beta=0
\end{subarray}}\exp((-1)^{n-1}n{}{N}_{n, \beta}x^n y^{\beta}).
\end{align}
Here the second equality follows from, 
$$\bigcup_{0<t<\infty}W_{t}=\{ -(n, \beta)\in \Gamma_{0}:
n-B\beta>0, \beta>0\} $$
and the last equality follows from taking the 
limit $B\to 0$ in $\pV(X/Y)$. 
Then (\ref{form:DT}) follows from (\ref{Bto0}), 
Theorem~\ref{mainTcurve} 
and the following formula, (cf.~\cite[Subsection~6.3]{JS},)
\begin{align}\label{McMahon}
\prod_{n>0}\exp((-1)^{n-1}n{}{N}_{n, 0}x^n)=
M(-x)^{\chi(X)}.
\end{align}
The formula (\ref{form1}) follows from (\ref{form:DT}), 
Lemma~\ref{eq:N} and the equality
$\chi(X)=\chi(X^{+})$. 
\end{proof}
Next we give the formula for ${}{\DT}_{0}(\pAA_Y)$. 
\begin{thm}\label{thm:form2}
Assuming Conjecture~\ref{conj:BG}, 
we have the following equality, 
\begin{align}
\label{form:nc}
{}{\DT}_{0}(\pAA_Y) &= \prod_{n>0, f_{\ast}\beta=0}
\exp((-1)^{n-1}n{N}_{n, \beta}
x^n y^{\beta}).
\end{align}
In particular, we have 
\begin{align}
\label{form2}
{}{\DT}_{0}(\pAA_Y)&={}{\DT}(X/Y)\cdot 
\phi_{\ast}{}{\DT}'(X^{+}/Y).
\end{align}
\end{thm}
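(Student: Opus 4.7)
The plan is to mimic the proof of Theorem~\ref{thm:form} but work entirely inside the wall $\pU_{X/Y}$, this time varying $z_0$ across the sub-wall $\arg z_0 = \arg z_1$ rather than moving $t$ to infinity. Fix $B\in\pV(X/Y)$, $\omega'\in A(Y)$ and $z_1\in\mathfrak{H}$ with $\arg z_1\in(\pi/2,\pi)$, $z_1\neq -1$. Consider a continuous family
$$\xi(s)=(-z_0(s),\, z_0(s)B,\, -i\omega',\, z_1),\qquad s\in[0,1],$$
of data of type~(\ref{data2}) chosen so that $\arg z_0(0)<\arg z_1<\arg z_0(1)$; Lemma~\ref{lem:constpair2} yields a path $\sigma_{\xi(s)}\in\pU_{X/Y}$, to which Assumption~\ref{assum} applies by Proposition~\ref{prop:check2}(ii) with $S=\pS_{X/Y}$ and $T=\pTT_{X/Y}$.

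First I would evaluate the endpoints. Restricting Proposition~\ref{prop:semi2}(ii) to classes $v=(-n,-\beta,1)$ with $(n,\beta)\in\Gamma_0$ gives $\DT_0(\sigma_{\xi(0)})=\DT_0(\pAA_Y)$, while Proposition~\ref{prop:semi2}(i) gives $\DT_0(\sigma_{\xi(1)})=1$. Next I would identify the walls in $\Gamma_0$: for $v=(-n,-\beta,0)\in\Gamma_0$ one automatically has $f_\ast\beta=0$ and $Z_{\xi(s)}(v)=z_0(s)(n-B\beta)$, $Z_{\xi(s)}(\oO_X)=z_1$, so $v\in W_s$ iff $\arg z_0(s)=\arg z_1$ and $n-B\beta>0$. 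Hence the path has a single wall at some $s_0\in(0,1)$ with $W_{s_0}=\{(-n,-\beta)\in\Gamma_0 : n>B\beta,\ \beta\in N_1(X/Y)\}$; since $\arg z_0$ is increasing across $s_0$, condition~(\ref{good1}) of Definition~\ref{def:good} holds uniformly on $W_{s_0}$, so the path is good and $\epsilon(s_0)=-1$. The wall-crossing formula~(\ref{cor:gen2}) then yields
$$1=\DT_0(\pAA_Y)\cdot\prod_{\substack{n>B\beta,\\ f_\ast\beta=0}}\exp\bigl((-1)^{n-1}nN_{n,\beta}\,x^ny^\beta\bigr)^{-1},$$
where the $N_{n,\beta}$ coincide with those appearing in Theorem~\ref{thm:form} thanks to Remark~\ref{rmk:N} and the connectedness of $\uU$ established in Proposition~\ref{VU}(ii).

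To deduce~(\ref{form:nc}) I would take the limit $B\to 0$ in $\pV(X/Y)$: the $n=0$ factors are trivial because $(-1)^{n-1}n=0$, and for any fixed $n>0$ the inequality $n>B\beta$ eventually holds as $B\to 0$, so the product stabilizes in $\mathbb{C}\db[\pTT_{X/Y}\db]$ to $\prod_{n>0,\, f_\ast\beta=0}\exp((-1)^{n-1}nN_{n,\beta}x^ny^\beta)$, precisely~(\ref{form:nc}). For~(\ref{form2}) I would split this product according to whether $\beta>0$, $\beta=0$ or $\beta<0$ in $\NE(X/Y)$. The $\beta=0$ factor equals $M(-x)^{\chi(X)}=\DT_0(X)$ by~(\ref{McMahon}), and combined with the $\beta>0$ factor it reproduces $\DT(X/Y)$ via Theorem~\ref{thm:form}. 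For the $\beta<0$ factor I substitute $\beta=\phi_\ast\beta^+$ with $\beta^+\in\NE(X^+/Y)$ using~(\ref{strict2}), apply Lemma~\ref{eq:N} ($N_{n,\beta}=N_{n,-\beta}$) together with the identification $N^X_{n,\phi_\ast\beta^+}=N^{X^+}_{n,\beta^+}$ given by Lemma~\ref{preiso} and Lemma~\ref{lem:group} for the standard equivalence~(\ref{Dstan}), and recognize the outcome as $\phi_\ast\DT'(X^+/Y)=\phi_\ast\bigl(\DT(X^+/Y)/\DT_0(X^+)\bigr)$ using $\DT_0(X^+)=\DT_0(X)$ (since $\chi(X^+)=\chi(X)$).

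The main obstacle will be the bookkeeping of signs in the wall-crossing product and the careful execution of the $B\to 0$ limit inside the completed group ring $\mathbb{C}\db[\pTT_{X/Y}\db]$; this is parallel to the displayed step~(\ref{Bto0}) in the proof of Theorem~\ref{thm:form}, and it relies on the topology induced by the cofinal family $\{\pS_{X/Y}\setminus(\pS_{X/Y})_\lambda\}_{\lambda\in\Lambda}$ from Assumption~\ref{assum2}.
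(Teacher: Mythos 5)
Your proposal is correct and follows essentially the same route as the paper: the paper also interpolates linearly in the $z_0$-parameter inside $\pU_{X/Y}$ between the two regimes of Proposition~\ref{prop:semi2} (just traversed in the opposite direction), identifies the single wall $W=\{-(n,\beta):n-B\beta>0,\ f_{\ast}\beta=0\}$, applies (\ref{cor:gen2}), and lets $B\to 0$. Your unpacking of (\ref{form2}) via the $\beta>0$, $\beta=0$, $\beta<0$ decomposition, Lemma~\ref{eq:N} and the identification $N^{X}_{n,\phi_{\ast}\beta^{+}}=N^{X^{+}}_{n,\beta^{+}}$ is exactly what the paper's one-line citation of (\ref{McMahon}) and Theorem~\ref{thm:form} is meant to encode.
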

\begin{proof}
Take data (\ref{data2}), 
\begin{align*}
\xi&=(-z_0, z_0B, -i\omega', z_1), \quad \arg z_0<\arg z_1, \\
\xi'&=(-z_0', z_0'B, -i\omega', z_1),  \quad \arg z_0'>\arg z_1, 
\end{align*}
and the associated weak stability conditions
$\sigma_{\xi}, \sigma_{\xi'} \in \pU_{X/Y}$. 
We consider a family of weak stability conditions 
$$\sigma_{\xi(t)}=(Z_{\xi(t)}, \pB_{X/Y})\in
\pU_{X/Y},$$
which connects $\sigma_{\xi}$ and $\sigma_{\xi'}$, 
 where $\xi(t)$
is given by 
\begin{align*}
\xi(t)=t\xi +(1-t)\xi'.
\end{align*}
By Proposition~\ref{prop:semi2}, we have 
$${}{\DT}_{0}(\sigma_{\xi(0)})=1, \quad 
{}{\DT}_{0}(\sigma_{\xi(1)})={}{\DT}_{0}(\pAA_Y).$$
Take $t_0\in (0, 1)$ which satisfies 
$$t_0z_0+(1-t_0)z_0' \in \mathbb{R}_{>0}z_1.$$
We have 
\begin{align*}
W_{t_0} &= 
\{v\in N_{\le 1}(X) : Z_{\xi(t_0)}(v) \in \mathbb{R}_{>0}z_1\}, \\
&=\{-(n, \beta)\in N_{\le 1}(X): n-B\beta>0\}.
\end{align*}
Hence applying
(\ref{cor:gen2}) in Theorem~\ref{thm:summ}, we obtain 
\begin{align*}
{}{\DT}_{0}(\pAA_Y)&=
\prod_{n-B\beta>0, f_{\ast}\beta=0}\exp((-1)^{n-1}n{}{N}_{n, \beta}x^n y^{\beta}), \\
&=\prod_{n>0, f_{\ast}\beta=0}\exp((-1)^{n-1}n{}{N}_{n, \beta}x^n y^{\beta}).
\end{align*}
Here the second equality follows from taking 
the limit $B\to 0$ in $\pV(X/Y)$. 
Hence (\ref{form:nc}) holds. 
The formula (\ref{form2}) follows from (\ref{McMahon}), 
(\ref{form:nc}) 
and Theorem~\ref{thm:form}. 
\end{proof}

\subsection{Global transformation formula}
Finally we show the global 
transformation formula of our generating functions. 
\begin{thm}\label{thm:global}
Assuming Conjecture~\ref{conj:BG},
we have the following formula, 
\begin{align}\label{form:global}
\frac{{}{\DT}(X)}{{}{\DT}(X/Y)}
=\frac{{}{\DT}(\pAA_Y)}{{}{\DT}_{0}(\pAA_Y)}
=\phi_{\ast}\frac{{}{\DT}(X^{+})}{{}{\DT}(X^{+}/Y)}.
\end{align}
\end{thm}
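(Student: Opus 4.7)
The plan is to evaluate the reduced series $\DT'(\sigma) := \DT(\sigma)/\DT_{0}(\sigma)$ at one general point in each of the three chambers $\uU_{X, B_0}$, $\Phi_{\ast}\uU_{X^+, B_0'}$, and $\pU_{X/Y}$ of the connected subset $\uU \subset \Stab_{\Gamma_{\bullet}}(\dD_X)$ from Proposition~\ref{VU}(ii), and to deduce the three-way equality (\ref{form:global}) from the wall-crossing invariance of $\DT'$ established in Theorem~\ref{thm:summ}.

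In $\uU_{X, B_0}$ (with $B_0 \in \pV(X/Y)$), take the one-parameter family $\sigma_{\xi(t)}$ with $\omega = tH$ for $t > 0$. For $t \gg 0$, Proposition~\ref{prop:semi1} identifies $\mM^{(-n, -\beta, 1)}(\sigma_{\xi(t)}) = [P_n(X, \beta)/\mathbb{G}_m]$ for every $(n, \beta)$, so summing over all $v = (-n, -\beta, 1) \in \Gamma$ and over those with $(n,\beta) \in \Gamma_0$ gives $\DT(\sigma_{\xi(t)}) = \PT(X)$ and $\DT_{0}(\sigma_{\xi(t)}) = \PT(X/Y)$ respectively. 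Since $\uU_{X, B_0}$ satisfies Assumption~\ref{assum} by Proposition~\ref{prop:check2}(i), Theorem~\ref{thm:summ} makes the ratio constant on $\uU_{X, B_0}$, and combined with Theorem~\ref{mainTcurve} this yields
\begin{equation*}
\DT'(\sigma) = \frac{\PT(X)}{\PT(X/Y)} = \frac{\DT(X)}{\DT(X/Y)}
\end{equation*}
at every general $\sigma \in \uU_{X, B_0}$. The analogous argument carried out on $X^+$ and transported through the equivalence of Lemma~\ref{stanrest} and the filtration-preserving isomorphism of Lemma~\ref{preiso} identifies $\DT'(\sigma) = \phi_{\ast}(\DT(X^+)/\DT(X^+/Y))$ on $\Phi_{\ast}\uU_{X^+, B_0'}$. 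On $\pU_{X/Y}$, a general $\sigma_{\xi}$ with $\arg z_0 < \arg z_1$ satisfies $\mM^{(-n, -\beta, 1)}(\sigma_{\xi}) = [I_n(\pAA_Y, \beta)/\mathbb{G}_m]$ by Proposition~\ref{prop:semi2}(ii), so $\DT(\sigma_{\xi}) = \DT(\pAA_Y)$ and $\DT_{0}(\sigma_{\xi}) = \DT_{0}(\pAA_Y)$, giving $\DT'(\sigma) = \DT(\pAA_Y)/\DT_{0}(\pAA_Y)$ throughout $\pU_{X/Y}$ by Theorem~\ref{thm:summ} and Proposition~\ref{prop:check2}(ii).

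To equate these three values, I would interpolate between representatives of the chambers inside $\uU$. By Proposition~\ref{VU}, the wall $\pmV_{X/Y}$ lies in $\overline{\uU}_X \cap \overline{\Phi_{\ast}\uU_{X^+}} \cap \overline{\pU_{X/Y}}$; concretely, letting $\omega = tH \to 0$ in the $\uU_{X, B_0}$ family produces, by Lemma~\ref{lem:scont}, a limit point in $\pmV_{X/Y}$, from which one rotates $z_0$ off the real axis into the interior of $\pU_{X/Y}$, and symmetrically on the flop side. Applying Theorem~\ref{thm:summ} on each connected piece along such a path, and using Remark~\ref{rmk:N} to identify the invariants $N_{n, \beta}$ across pieces whose union is connected, propagates the equality of $\DT'$ throughout $\uU$ and produces (\ref{form:global}).

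The main obstacle is verifying that such a piecewise interpolating path can be arranged to satisfy the good-path condition of Definition~\ref{def:good} and that Assumption~\ref{assum} is met uniformly across $\pmV_{X/Y}$ inside the common completion $\mathbb{C}[\pS_{X/Y}]$ of Proposition~\ref{prop:check2}, so that the wall-crossing formula of Theorem~\ref{thm:summ} applies consistently on both sides of the wall. This is an analogue of the one-parameter constructions already carried out in the proofs of Theorem~\ref{thm:form} and Theorem~\ref{thm:form2}, where the crossing near $\pmV_{X/Y}$ was explicitly analyzed.
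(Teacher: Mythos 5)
Your proposal is correct and follows essentially the same route as the paper: identify $\DT'(\sigma)$ with $\PT(X)/\PT(X/Y)$, $\DT(\pAA_Y)/\DT_0(\pAA_Y)$, and $\phi_{\ast}(\PT(X^+)/\PT(X^+/Y))$ at general points of the three chambers via Propositions~\ref{prop:semi1} and~\ref{prop:semi2}, then propagate equality through the connected set $\uU$ using Theorem~\ref{thm:summ}, Proposition~\ref{VU}, and Remark~\ref{rmk:N}. The paper merely packages the interpolation you describe as a single explicit continuous family $\sigma_{\xi(t)}$, $t\in(-\infty,\infty)$, whose value at $t=0$ is already a general point on the wall $\omV_{X/Y}$ connected to the non-commutative chamber, so the "main obstacle" you flag is resolved exactly by the assumptions you cite.
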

\begin{proof}
Let us take 
$$\xi(t)=(1, -(B+itH), -i\omega', z), \ t\in \mathbb{R}_{>0}$$
as in (\ref{fixdata}), (\ref{xit}) and 
a 1-parameter family of weak stability
conditions 
$\sigma_{\xi(t)}\in \uU_{X, B}$
as in (\ref{Ufam}). 
By Proposition~\ref{prop:semi1} and Theorem~\ref{mainTcurve}, we have  
\begin{align}\label{eq1}
\lim_{t\to \infty}
\frac{{}{\DT}(\sigma_{\xi(t)})}{{}{\DT}_0(\sigma_{\xi(t)})}
=\frac{{}{\PT}(X)}{{}{\PT}(X/Y)}
=\frac{{}{\DT}(X)}{{}{\DT}(X/Y)},
\end{align}
in $\mathbb{C}\db[S_{X}\db]$. On the other hand, 
we have 
\begin{align}\label{eq2}
\lim_{t\to 0}
\frac{{}{\DT}(\sigma_{\xi(t)})}{{}{\DT}_0(\sigma_{\xi(t)})}
=\frac{{}{\DT}(\sigma_{\xi(0)})}{{}{\DT}_0(\sigma_{\xi(0)})},
\end{align}
since $\sigma_{\xi(0)}$ is a general point
and $\lim_{t\to 0}\sigma_{\xi(t)}=\sigma_{\xi(0)}$ 
by Proposition~\ref{VU}. 
Next let us take a data (\ref{data2}),
$$\xi=(-z_0, z_0B, -i\omega', z_1), \quad \arg z_0<\arg z_1. $$
By Theorem~\ref{thm:summ} and Proposition~\ref{prop:semi2}, we have 
\begin{align}\label{eq3}
\frac{{}{\DT}(\sigma_{\xi(0)})}{{}{\DT}_0(\sigma_{\xi(0)})}
=\frac{{}{\DT}(\sigma_{\xi})}{{}{\DT}_0(\sigma_{\xi})}
=\frac{{}{\DT}(\pAA_Y)}{{}{\DT}_{0}(\pAA_Y)}.
\end{align}
Finally suppose that $p=0$, i.e. $B\in \oV(X/Y)$. 
Note that we have $\phi_{\ast}^{-1}(B) \in \iV(X^{+}/Y)$.
For $t<0$, we set 
$$\xi(t)=(1, -\phi_{\ast}^{-1}(B+tiH), -i\omega', z), $$
which gives data (\ref{data1}) for $X^{+}$. We have the associated
1-parameter family of 
weak stability conditions $\sigma_{\xi(t)}^{+} \in 
\uU_{X^{+}, \phi_{\ast}^{-1}B}$, and we set 
$$\sigma_{\xi(t)}\cneq \Phi_{\ast}\sigma_{\xi(t)}^{+}
\in \Phi_{\ast}(\uU_{X^{+}, \phi_{\ast}^{-1}B}) \mbox{ for }
t<0.$$
By Proposition~\ref{VU}, the family 
$\sigma_{\xi(t)}$ is a continuous family for $t\in (-\infty, \infty)$. 
By Proposition~\ref{prop:semi1} and Theorem~\ref{mainTcurve}, 
we have 
\begin{align}\label{eq4}
\lim_{t\to -\infty}
\frac{{}{\DT}(\sigma_{\xi(t)})}{{}{\DT}_0(\sigma_{\xi(t)})}
=\phi_{\ast}\frac{{}{\PT}(X^{+})}{{}{\PT}(X^{+}/Y)}
=\phi_{\ast}\frac{{}{\DT}(X^{+})}{{}{\DT}(X^{+}/Y)}.
\end{align}
Then the formula (\ref{form:global}) 
follows from (\ref{eq1}), (\ref{eq2}), (\ref{eq3}), 
(\ref{eq4}) and Theorem~\ref{thm:summ}. 
\end{proof}

\section{Some technical lemmas}\label{sec:tech}
\subsection{Proof of Lemma~\ref{lem:noether}}
\begin{proof}
(i) The noetherian property of $\aA_X$ is proved in~\cite[Lemma~6.2]{Tcurve1}. 
Let us show that $\pB_{X/Y}$ is noetherian. 
For simplicity, we show the case of $p=0$. 
Take a chain of surjections in $\oB_{X/Y}$, 
\begin{align}\label{chain}
E_0 \twoheadrightarrow E_1 \twoheadrightarrow \cdots
\twoheadrightarrow E_j \twoheadrightarrow E_{j+1} \twoheadrightarrow 
\cdots.
\end{align}
The description (\ref{per:gen10}) shows that
$\oB_{X/Y}$ is concentrated on $[0, 1]$
with respect to the standard t-structure on $D^b(\Coh(X))$, 
and (\ref{chain}) induces a chain of surjections in $\Coh(X)$, 
$$\hH^1(E_0) \twoheadrightarrow 
 \hH^1(E_1) \twoheadrightarrow \cdots
\twoheadrightarrow \hH^1(E_j)
 \twoheadrightarrow \hH^1(E_{j+1}) \twoheadrightarrow 
\cdots.$$
Hence
we may assume that $\hH^1(E_i) \stackrel{\cong}{\to}\hH^1(E_{i+1})$
for all $i$. 
By (\ref{pB}) and the definition of $\oPPer(X/Y)$, 
we have the exact functor, 
\begin{align}\label{exact}
\dR f_{\ast} \colon \oB_{X/Y} \lr \langle \oO_Y, \Coh_{\le 1}(Y)[-1]
\rangle_{\ex}.
\end{align}
The proof that $\aA_X$ is noetherian
(cf.~\cite[Lemma~6.2]{Tcurve1}) is also applied for
the singular variety $Y$, hence 
the category $\langle \oO_Y, \Coh_{\le 1}(Y)[-1]
\rangle_{\ex}$ is also noetherian. 
Therefore
we may assume that $\dR f_{\ast}E_i \stackrel{\cong}{\to}
\dR f_{\ast}E_{i+1}$ for all $i$. 
Consider the exact sequence in $\oB_{X/Y}$, 
$$0 \lr F_{i} \lr E_0 \lr E_i \lr 0.$$
Noting (\ref{per:gen10}) and
 $\dR f_{\ast}F_i=0$, 
the object $F_i$ is written as
$F_i'[-1]$, where $F_i'$ is given by successive 
extensions of sheaves $\oO_{C_k}(-1)$
with $1\le k \le N$. Hence 
we obtain the exact sequence of sheaves, 
$$0 \lr \hH^0(E_0) \lr \hH^0(E_i) \lr F_i' \lr 0.$$
Since $\dim C_k \le 1$, we obtain 
the sequence of coherent sheaves, 
\begin{align}\label{dual}
\hH^0(E_0) \subset \hH^0(E_{1}) \subset \cdots \subset 
\hH^0(E_{j}) \subset \cdots \subset 
\hH^0(E_0)^{\vee \vee}.
\end{align}
Since $\Coh(X)$ is noetherian,
the above sequence terminates.  

(ii) First we show the termination of (\ref{stmo2}) 
in $\aA_{X}$. Take a sequence (\ref{stmo2})
in $\aA_X$, and an ample divisor $\omega$ on $X$. 
By (\ref{writeA}), 
we have $\ch_0(E)\ge 0$ and $-\ch_2(E)\cdot \omega \ge 0$
for any object $E\in \aA_X$. 
Therefore we may assume that $\ch_0(E_i)$ and $\ch_2(E_i)\cdot \omega$
are constant for all $i$. Then $E_j/E_{j+1}$ is 0-dimensional, 
hence it must be zero by the assumption. 

Next we show the termination of (\ref{stmo2}) 
in $\pB_{X/Y}$. For simplicity we show the case of $p=0$. 
By the same argument as in (i), 
we may assume that $G_j=E_0/E_{j+1}$ in $\oB_{X/Y}$
is written as $G_j'[-1]$, where $G_j'$ 
is given by successive extensions of
sheaves $\oO_{C_k}(-1)$
with $1\le k\le N$. 
We have the surjections of sheaves
$$\hH^1(E_0)\twoheadrightarrow  \cdots 
\twoheadrightarrow \cdots  \twoheadrightarrow G_{2}' \twoheadrightarrow 
G_{1}'.$$
 Since 
   $\hH^1(E_0) \in \Coh_{\le 1}(X)$, 
   the above sequence must terminate, and hence
    (\ref{stmo2}) also terminates.  
    \end{proof}
\subsection{Proof of Lemma~\ref{lem:pair1}}
\begin{step}
The pair $\sigma_{\xi}=(Z_{\xi}, \aA_{X})$
satisfies the Harder-Narasimhan property. 
\end{step}
\begin{proof}
It is enough to check (a) and (b) in 
Proposition~\ref{suHN}.
The condition (b) follows from Lemma~\ref{lem:noether} (i).  
In order to check (a), take a chain of monomorphisms in $\aA_X$, 
$$\cdots \subset E_{j+1} \subset E_j \subset \cdots \subset E_2 \subset E_1$$
with $\arg Z(E_{j+1})>\arg Z(E_j/E_{j+1})$ for all $j$. 
By Lemma~\ref{lem:noether} (ii), 
we have $E_j/E_{j+1} \in \Coh_{0}(X)$ for some $j$. 
Then we have 
$\arg Z(E_{j+1})>\arg Z(E_j/E_{j+1})=\pi$, which 
contradicts to (\ref{def:weak}). 
\end{proof}
\begin{step}
Let $\{\pP_{\xi}(\phi)\}_{\phi \in \mathbb{R}}$
  be the slicing corresponding to the pair 
 $\sigma_{\xi}=(Z_{\xi}, \aA_X)$
 via Proposition~\ref{prop:corr}. 
 Then $\{\pP_{\xi}(\phi)\}_{\phi \in \mathbb{R}}$ 
 is of locally finite. 
\end{step}
\begin{proof}
We set $\phi=\frac{1}{\pi}\arg z \in (1/2, 1)$ and 
take $0<\eta \ll 1$ satisfying $\phi\pm \eta \in (0, 1)$. 
By Lemma~\ref{lem:noether}, it is enough to check that 
$\pP_{\xi}((\theta-\eta, \theta+\eta))$ is of finite
length for any $\theta \in (1-\eta, 1+\eta)$. 
Let us consider the pair, 
\begin{align}\label{X/Y}
(Z_{0, \xi}, \Coh(X/Y)), 
\end{align}
where 
$$\Coh(X/Y)=\{E\in \Coh(X) : \dim \Supp f_{\ast}E=0\}.$$
Then the pair (\ref{X/Y}) determines a locally finite 
stability condition on $D^b(\Coh(X/Y))$ in the sense 
of Bridgeland~\cite{Brs1}. (cf.~\cite[Lemma~4.1]{Tst}.)
We write the corresponding slicing on $D^b(\Coh(X/Y))$
by $\{\qQ(\phi)\}_{\phi \in \mathbb{R}}$. 
By our choice of $\eta$, we have 
$$\pP_{\xi}((\theta-\eta, \theta+\eta))=\qQ((\theta-\eta, \theta+\eta)), $$
for any $\theta \in (1-\eta, 1+\eta)$. Therefore 
$\pP_{\xi}((\theta-\eta, \theta+\eta))$ is of finite length. 
\end{proof}
\begin{step}
The pair $\sigma_{\xi}=(Z_{\xi}, \pP_{\xi})$ satisfies the 
support property (\ref{support}).
\end{step}
\begin{proof}
Let $E\in \aA_X$ be a non-zero object 
 with $\cl(E)=(-n, -\beta, r)$. 
 We introduce an usual Euclid 
 norm on 
 $\mathbb{H}_2 \otimes_{\mathbb{Z}} \mathbb{R}=\mathbb{R}$. 
 We have 
 \begin{align*}
 \frac{\lVert E \rVert}{\lvert Z(E) \rvert}
 =\left\{ \begin{array}{cc}
  \lvert z \rvert,  & r>0, \\
  \frac{\lVert f_{\ast}\beta \rVert}{\omega' \cdot f_{\ast}\beta},
   & r=0, \ f_{\ast}\beta\neq 0, \\
  \frac{n^2 +\lVert \beta \rVert^2}{(n-B\beta)^2 +(\omega \beta)^2}, 
  & r=f_{\ast}\beta=0, n>0.
 \end{array}
 \right. 
 \end{align*}
 Since $\beta$ is effective or zero, the above 
 description easily implies the support property. 
\end{proof}

\subsection{Proof of Proposition~\ref{prop:check}}
The conditions of Assumption~\ref{assum}
are obviously satisfied except the fourth, fifth and sixth 
conditions. As for the fourth condition, 
this is proved in~\cite[Lemma~3.15]{Tcurve1}. 
It is enough to check the fifth and the sixth conditions. 
\begin{sstep}
Take $\sigma_{\xi} \in \uU_{X, B}$ and 
$v\in \Gamma$ with $\rk(v)=1$
or $v\in \Gamma_0$. Then the stack 
$$\mM^v(\sigma_{\xi})\subset \oO bj^{v}(\aA_X),$$
is an open substack of finite type over $\mathbb{C}$. 
\end{sstep}
\begin{proof}
As in the proof of~\cite[Lemma~3.15]{Tcurve1},
it is enough to check the boundedness of $\sigma_{\xi}$-semistable 
objects of numerical type $v$. 
This is proved along with the same argument of~\cite[Section~3]{Tolim}, 
and we leave the readers to check the detail. 
\end{proof}
\begin{sstep}
The sets $S_X$ and $T_X$ satisfy 
Assumption~\ref{assum2}. 
\end{sstep}
\begin{proof}
The first and the second
conditions of Assumption~\ref{assum2} are obviously satisfied. 
In order to prove the third condition, take an object
$E\in C_{\sigma_{\xi}}(I)$ with
$\sigma_{\xi}=(Z_{\xi}, \aA_{X}) \in \uU_{X, B}$ and 
 $\cl(E)=(-n, -\beta, 1)$. 
 We are going to check that $(n, \beta)\in S_X$. 
We have the exact sequence in $\aA_X$
\begin{align}\label{exAA}
0 \lr I_C \lr E \lr F[-1] \lr 0, 
\end{align}
as in (\ref{exA}).
Also we have the exact sequence of sheaves, 
$$0 \lr F_1 \lr F \lr F_2 \lr 0, $$
with $F_1 \in \pT$ and $F_2 \in \pF$, 
where $(\pT, \pF)$ is a torsion
pair on $\Coh_{\le 1}(X)$ given by (\ref{pTpF}). 
Assume that $F_2 \neq 0$. Then we have the 
surjection $E\twoheadrightarrow F_2[-1]$ in $\aA_X$. 
Then it is easy to see that 
$$\arg Z_{\xi}(E)>\pi/2> \arg Z_{\xi}(F_2[-1]), $$
which contradicts to $E\in C_{\sigma_{\xi}}(I)$. 
Therefore $F_2=0$, and $F\in \pT \subset \ppPPer_{\le 1}(X/Y)$
follows. 
Also if $\dR f_{\ast}F$ is not 0-dimensional, we have 
$$\arg Z_{\xi}(E)>\arg Z_{\xi}(F)=\pi/2,$$
which contradicts to the $\sigma_{\xi}$-semistability 
of $E$. Therefore $\dR f_{\ast}F$ is 0-dimensional, which 
means 
$F\in \ppPPer_{0}(X/Y)$. 
This implies that 
\begin{align}\label{ineqch}
\ch_3(F)=\length \dR f_{\ast}F \ge 0.
\end{align}
On the other hand, the definition of $m(\beta)$ implies 
$\ch_3(\oO_C)\ge m(\beta)$. 
Hence by (\ref{exAA}) and (\ref{ineqch}), the inequality 
$n\ge m(\beta)$ holds, i.e. $(n, \beta)\in S_X$.
  
 Also if $(n, \beta)\in \Gamma_0$, 
then the curve $C$ 
in the sequence (\ref{exAA}) satisfies $f_{\ast}[C]=0$, 
hence we have $H^1(\oO_C)=0$. This implies that
$\ch_3(\oO_C)=\chi(\oO_C)\ge 0$, hence 
$(n, \beta)\in T_X$ holds. If $\cl(E)=(-n, -\beta, 0)$, 
then the same argument shows 
that $E=F[-1]$ for $F\in \pT$
and $\Supp(F) \subset \Ex(f)$.
Hence $\ch_3(F)\ge 0$, and $(n, \beta)\in T_X$ follows. 

Finally we check the last condition of Assumption~\ref{assum2}. 
Let
$\Lambda$ be the set of pairs $(k, \beta')$ of 
$k\in \mathbb{Z}$ and an effective class $\beta' \in N_1(X)$. 
For $\lambda=(k, \beta')$, we set 
$$S_{\lambda}=\{ (n, \beta) \in S :
n\ge k \mbox{ if }\beta\le \beta'\}.
$$
Then $\{S_{\lambda}\}_{\lambda \in \Lambda}$ 
gives a desired family. 
\end{proof}

\subsection{Proof of Proposition~\ref{prop:check2}}
\begin{proof}
(i) By Proposition~\ref{prop:check}, it is enough to check 
the third and the fourth condition of Assumption~\ref{assum2}. 
For $\sigma_{\xi} \in \uU_{X, B}$, let us take 
an object $E\in C_{\sigma_{\xi}}(I)$ with 
$\cl(E)=(-n, -\beta, 1)$.
We check that $(n, \beta)\in \pS_{X/Y}$. 
We have an exact sequence in $\aA_X$, 
\begin{align}\label{E'EF}
0 \lr E' \lr E \lr F[-1] \lr 0, 
\end{align}
for $F\in \pF$ and $E' \in \pT'$
by the existence of the torsion pair (\ref{pT}). 
By the condition $E\in C_{\sigma}(I)$, 
we have $F[1] \in \ppPPer_{0}(X/Y)$, hence 
$v=(\ch_3(F), \ch_2(F))$ satisfies 
\begin{align}\label{v1}
\ch_3(F)\ge 0, \quad \pchi(v)\ge 0, 
\end{align}
by Lemma~\ref{lem:pchi} below. 
On the other hand, since $E' \in \pT' \subset 
\pB_{X/Y}$, we have the exact sequence in 
$\pB_{X/Y}$, 
\begin{align}\label{AEB}
0\lr A \lr E' \lr A'[-1] \lr 0, 
\end{align}
with $A\in \ppPPer_{\ge 2}(X/Y)$ and $A'\in \ppPPer_{\le 1}(X/Y)$. 
Taking the long exact sequence of cohomology 
with respect to the t-structure with heart $\aA_X$, 
we see that $A\in \aA_X$ and 
obtain the following exact sequence in $\aA_X$, 
\begin{align*}
0 \lr \hH^{-1}(B)[-1] \lr A \lr E' \lr \hH^{0}(B)[-1] \lr 0.
\end{align*}
By (\ref{E'EF}) and 
the condition $E\in C_{\sigma_{\xi}}(I)$,
we conclude that $\hH^i(A')$ are supported on 
fibers of $f$, hence $A'\in \ppPPer_{0}(X/Y)$ follows. 
Therefore $v'=(\ch_3(A'), \ch_2(A'))$ satisfies 
\begin{align}\label{v2}
\ch_3(A') \ge 0, \quad \pchi(v') \ge 0,
\end{align}
by Lemma~\ref{lem:pchi}. 
By Lemma~\ref{lem:filtAB}, the object $A\in \ppPPer(X/Y)$
 fits into the exact 
sequence in $\ppPPer(X/Y)$, 
$$0 \lr A \lr \oO_X \lr A'' \lr 0.$$
Applying $\dR f_{\ast}$, we obtain surjections in $\Coh(Y)$, 
$$\oO_Y \twoheadrightarrow \dR f_{\ast}A'', \quad 
f_{\ast}\pE^{'\vee} \twoheadrightarrow \dR f_{\ast}(A''\otimes 
\pE^{'\vee}).$$
Combining (\ref{v1}) and (\ref{v2}), $v=(n, \beta)$ satisfies
\begin{align*}
n&\ge \chi(\dR f_{\ast}A'') \ge m(f_{\ast}\beta), \\ 
\pchi(v)&\ge \chi(\dR f_{\ast}(A''\otimes 
\pE^{'\vee}))\ge \ppm(r(p)f_{\ast}\beta), 
\end{align*}
which implies $(n, \beta)\in \pS_{X/Y}$. 
A similar proof shows that if $(n, \beta)\in \Gamma_0$, 
(or $\cl(E)=(-n, -\beta, 0),$)
then $(n, \beta)\in T_{X/Y}$.

(ii)
As for the sixth condition of Assumption~\ref{assum2}, 
a similar (and easier) proof to (i) works, and we omit the detail. 
Here we check the forth and the fifth conditions of Assumption~\ref{assum}.
\begin{ssstep}
For $v\in \Gamma$ with $\rk(v)=1$ or $v\in \Gamma_{0}$,
the stack of objects 
$$\oO bj^{v}(\pB_{X/Y}) \subset \mM,$$
is an open substack of $\mM$. 
\end{ssstep} 
\begin{proof}
Note that the category $\pB_{X/Y}$ is equivalent to 
the category, 
$$\langle \pAA_{Y}', \Coh_{\le 1}(\pAA_{Y}) \rangle_{\ex}, $$
via the equivalence (\ref{nc}). Then we can apply the same 
argument of~\cite[Lemma~3.15]{Tcurve1}
for the non-commutative scheme $(Y, \pAA_{Y})$, and 
obtain the result. 
\end{proof}
\begin{ssstep}
Take $\sigma_{\xi}=(Z_{\xi}, \pB_{X/Y}) \in \pU_{X/Y}$
and $v\in \Gamma$ with $\rk(v)=1$ or $v\in \Gamma_{0}$. 
Then the substack 
$$\mM^{v}(\sigma_{\xi}) \subset \oO bj^{v}(\pB_{X/Y}),$$
is an open substack and it is of finite type over $\mathbb{C}$. 
\end{ssstep}
\begin{proof}
As in the proof of~\cite[Lemma~3.15]{Tcurve1}, 
it is enough to show the boundedness of $\sigma_{\xi}$-semistable 
objects of numerical type $v$. This follows by the same argument as 
in~\cite[Section~3]{Tolim}, applied for the non-commutative 
scheme $(Y, \pAA_Y)$. We leave the readers to check the detail. 
\end{proof}
\end{proof}
We have used the following lemma. 
\begin{lem}\label{lem:pchi}
For $F\in \ppPPer_{0}(X/Y)$, set
$v=(\ch_3(F), \ch_2(F))\in N_{\le 1}(X)$.
Then we have 
$$\ch_3(F)\ge 0, \quad \pchi(v) \ge 0.$$
\end{lem}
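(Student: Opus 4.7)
The strategy is to interpret $\ch_3(F)$ and $\pchi(v)$ as holomorphic Euler characteristics of sheaves on $Y$ pushed forward from $X$, and then exploit the hypothesis $F\in \ppPPer_0(X/Y)$ to see that these Euler characteristics are dimensions of finite-dimensional vector spaces, hence non-negative.

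First I would use the derived equivalence (\ref{nc}) and the decomposition $\pE=\oO_X\oplus \pE'$ from the proof of Theorem~\ref{thm:nc}. Setting $G\cneq \pPhi(F)\in \Coh_0(\pAA_Y)$, the decomposition $\pAA_Y=\pAA_Y'\oplus \pAA_Y''$ in (\ref{locpro}) gives
$$G=\dR f_{\ast}F\oplus \dR f_{\ast}(F\otimes \pE^{'\vee}).$$
Since $G$ is a genuine sheaf on $Y$ (not just a complex) with $\dim\Supp G=0$, each summand is concentrated in degree zero and is a finite length sheaf on $Y$. In particular
$$\chi(F)=\dim_{\mathbb{C}}f_{\ast}F\ge 0,\qquad
\chi(F\otimes \pE^{'\vee})=\dim_{\mathbb{C}}f_{\ast}(F\otimes \pE^{'\vee})\ge 0.$$
Note also that since $f_{\ast}F$ is $0$-dimensional while (the sheafified version of) $F$ is at most one-dimensional, pushing forward the cohomology sheaves of $F$ shows $f_{\ast}\ch_2(F)=0$ in $N_1(Y)$, equivalently $f_{\ast}\beta=0$.

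Next I would apply Hirzebruch--Riemann--Roch on $X$. Since $X$ is Calabi--Yau, $\td_X=1+c_2(X)/12$, and since $F$ has support of dimension $\le 1$ we have $\ch_0(F)=\ch_1(F)=0$. Hence only the top-degree terms contribute:
\begin{align*}
\chi(F)&=\ch_3(F)=n, \\
\chi(F\otimes \pE^{'\vee})&=r(p)\,\ch_3(F)+\ch_1(\pE^{'\vee})\cdot \ch_2(F)
=r(p)n-c_1(\pE')\cdot \beta.
\end{align*}
It then remains to identify $-c_1(\pE')\cdot \beta$ with $(-1)^p c_1(\lL_X)\cdot \beta$. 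For $p=0$, the defining sequence (\ref{oE}) gives $c_1(\oE')=-c_1(\lL_X)-a\,f^{\ast}c_1(\lL_Y)$, so by the projection formula
$$-c_1(\oE')\cdot \beta=c_1(\lL_X)\cdot \beta+a\,c_1(\lL_Y)\cdot f_{\ast}\beta
=c_1(\lL_X)\cdot \beta,$$
using $f_{\ast}\beta=0$. The case $p=-1$ is analogous, with the opposite sign coming from the dual construction of $\iE'$. In each case the right-hand side equals $\pchi(v)-r(p)n$ by definition.

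Combining these identifications yields $\ch_3(F)=\chi(F)\ge 0$ and $\pchi(v)=\chi(F\otimes \pE^{'\vee})\ge 0$, as required. The only real subtlety in the plan is justifying $f_{\ast}\beta=0$ when $F$ is merely a complex in $\ppPPer_0(X/Y)$ rather than a sheaf; this I would handle by passing to cohomology sheaves $\hH^i_{\Coh}(F)$, noting that each contributes a subquotient of $G$ (via the appropriate spectral sequence) so that each is again sent to a $0$-dimensional sheaf by $\dR f_{\ast}$, whence their $\ch_2$ classes all push forward to $0$.
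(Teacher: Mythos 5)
Your proposal is correct and follows essentially the same route as the paper: the paper's proof likewise observes that $\pPhi(F)=\dR f_{\ast}\dR\hH om(\oO_X\oplus\pE',F)$ lies in $\Coh_0$, so each of the two summands is a finite-length sheaf in degree zero, and then identifies $\ch_3(F)$ and $\pchi(v)$ with the corresponding (non-negative) lengths via Riemann--Roch. Your extra care in checking $f_{\ast}\ch_2(F)=0$ so that $c_1(\pE^{'\vee})\cdot\beta=(-1)^p c_1(\lL_X)\cdot\beta$ is a worthwhile reconciliation of the two expressions in the paper's definition of $\pchi$, but it is not a different method.
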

\begin{proof}
For $F\in \ppPPer_{0}(X/Y)$, we have 
$$\dR \Hom(\oO_X \oplus \pE', F) \in \Coh_{0}(Y), $$
by the equivalence (\ref{nc}). Therefore by Riemann-Roch theorem, we have
\begin{align*}
\ch_3(F)&=\length \dR \Hom(\oO_X, F)\ge 0, \\
\pchi(v)&=\length \dR \Hom(\pE', F)\ge 0.
\end{align*}
\end{proof}

\section{Appendix}
\subsection{The formula for the Euler characteristic version}
\label{subsec:Euler}
Applying the result of~\cite[Section~4]{Tcurve1}
and the method in this paper, we can also show the 
Euler characteristic version of our main result. 
\begin{defi}\label{def:Euler}
\emph{We define $\widehat{\DT}(X)$, $\widehat{\DT}(X/Y)$, 
$\widehat{\DT}(\pAA_Y)$, $\widehat{\DT}_{0}(\pAA_Y)$ as follows. }
\begin{align*}
\widehat{\DT}(X)&=\sum_{n, \beta}\chi(I_n(X, \beta))x^n y^{\beta}, \\
\widehat{\DT}(X/Y)&=\sum_{n, f_{\ast}\beta=0}
\chi(I_n(X, \beta))x^n y^{\beta}, \\
\widehat{\DT}(\pAA_{Y})&=\sum_{n, \beta}\chi(I_n(\pAA_Y, \beta))
x^n y^{\beta}, \\
\widehat{\DT}_{0}(\pAA_{Y})&=\sum_{n, f_{\ast}\beta=0}
\chi(I_n(\pAA_Y, \beta))x^n y^{\beta}.
\end{align*}
\end{defi}
The following theorem can be proved along with the same 
proof of Theorem~\ref{thm:form}, Theorem~\ref{thm:form2} and 
Theorem~\ref{thm:global}, using~\cite[Theorem~3.13]{Tcurve1}
instead of Theorem~\ref{thm:summ}. 
\begin{thm}\label{thm:Euler}
We have the following formula, 
\begin{align*}
\widehat{\DT}(X/Y)&=i\circ \phi_{\ast}\widehat{\DT}(X^{+}/Y), \\
\widehat{\DT}_0(A_Y)&=\widehat{\DT}(X/Y) \cdot 
\phi_{\ast}\widehat{\DT}'(X^{+}/Y), \\
\frac{\widehat{\DT}(X)}{\widehat{\DT}(X/Y)}&=
\frac{\widehat{\DT}(\pAA_Y)}{\widehat{\DT}_0(\pAA_Y)}=\phi_{\ast}
\frac{\widehat{\DT}(X^{+})}{\widehat{\DT}(X^{+}/Y)}.
\end{align*}
\end{thm}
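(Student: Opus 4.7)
\textbf{Proof proposal for Theorem~\ref{thm:Euler}.}

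The plan is to repeat the arguments used for Theorem~\ref{thm:form}, Theorem~\ref{thm:form2} and Theorem~\ref{thm:global} verbatim, substituting the Euler--characteristic wall-crossing formula of~\cite[Theorem~3.13]{Tcurve1} for the Behrend-weighted version stated here as Theorem~\ref{thm:summ}. The key observation that makes this substitution work is that Proposition~\ref{prop:semi1} and Proposition~\ref{prop:semi2} are statements about identifications of moduli stacks of $\sigma$-semistable objects in $\dD_X$, not statements about their virtual classes; hence they transfer without change to the Euler--characteristic setting. Consequently, the footnote's claim that the proof avoids~\cite{BG} is automatic: \cite[Theorem~3.13]{Tcurve1} already formulates wall-crossing for ordinary Euler characteristics without invoking cohomological Hall algebra input.

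First, I would introduce for every $\sigma=(Z,\pP)\in\vV$ satisfying Assumption~\ref{assum} the Euler counting invariants $\widehat{\DT}_{n,\beta}(\sigma),\widehat{N}_{n,\beta}\in\mathbb{Q}$, constructed as in~\cite[Section~3]{Tcurve1}, together with the series $\widehat{\DT}(\sigma)$, $\widehat{\DT}_{0}(\sigma)$ defined by the evident analogues of (\ref{DTse1}), (\ref{DTse2}). Applying \cite[Theorem~3.13]{Tcurve1} along any good path between general points $\sigma,\tau\in\vV$ then yields the exact analogues of (\ref{cor:gen}), (\ref{cor:gen2}), with $\DT$ replaced by $\widehat{\DT}$ and $N_{n,\beta}$ by $\widehat{N}_{n,\beta}$. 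The analogue of Lemma~\ref{eq:N} also holds, since the dualizing equivalence $\dR\hH om(-,\oO_X)$ respects Euler characteristics of moduli spaces just as it does for weighted ones.

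Second, running the $1$-parameter family $\sigma_{\xi(t)}\in\uU_{X,B}$ of (\ref{Ufam}) and the linear family connecting the two non-commutative points $\sigma_\xi,\sigma_{\xi'}\in\pU_{X/Y}$, Proposition~\ref{prop:semi1} identifies the limit $t\to\infty$ of $\widehat{\DT}(\sigma_{\xi(t)})$ with the Euler PT series $\widehat{\PT}(X)\cneq \sum_{n,\beta}\chi(P_n(X,\beta))x^n y^\beta$, and Proposition~\ref{prop:semi2}(i)--(ii) yields $\widehat{\DT}_{0}(\sigma_{\xi(0)})=1$ and $\widehat{\DT}(\sigma_\xi)=\widehat{\DT}(\pAA_Y)$. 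The Euler wall-crossing formula then produces, after taking the limit $B\to 0$ in $\pV(X/Y)$ as in (\ref{Bto0}), the product expressions
\[
\widehat{\PT}(X/Y)=\!\!\prod_{\substack{n>0,\beta>0\\ f_\ast\beta=0}}\!\!\exp\bigl((-1)^{n-1}n\widehat{N}_{n,\beta}x^n y^\beta\bigr),\qquad
\widehat{\DT}_{0}(\pAA_Y)=\!\!\prod_{\substack{n>0\\ f_\ast\beta=0}}\!\!\exp\bigl((-1)^{n-1}n\widehat{N}_{n,\beta}x^n y^\beta\bigr).
\]
Combining these with the Euler MacMahon-type identity $\widehat{\DT}_{0}(X)=\sum_n\chi(\Hilb^n X)x^n$ (Cheah's formula) expressed as $\prod_{n>0}\exp((-1)^{n-1}n\widehat{N}_{n,0}x^n)$ up to the appropriate normalization, and with the evident Euler analogue of Theorem~\ref{mainTcurve} (which is again contained in \cite[Theorem~3.13]{Tcurve1} applied along the same one-parameter family), yields the first two identities of Theorem~\ref{thm:Euler}. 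The global identity is then obtained by running the continuous family $\sigma_{\xi(t)}$ for $t\in(-\infty,\infty)$ that passes from $\uU_{X,B}$ through $\omV_{X/Y}$ into $\Phi_\ast\uU_{X^+,\phi_\ast^{-1}B}$, exactly mimicking the proof of Theorem~\ref{thm:global}.

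The only non-routine point is the identification, at the level of Euler characteristics, of the reduced PT and DT series (the analogue of Theorem~\ref{mainTcurve}). The virtual-class proof given in~\cite[Section~8]{Tcurve1} relied on~\cite{BG}; however, at the Euler characteristic level the same wall-crossing computed along the family $\sigma_{\xi(t)}$ simultaneously expresses both $\widehat{\DT}(X)/\widehat{\DT}_{0}(X)$ and $\widehat{\PT}(X)$ as the same infinite product in the $\widehat{N}_{n,\beta}$, so that equality is immediate once the wall-crossing of~\cite[Theorem~3.13]{Tcurve1} is in hand. Thus no separate Euler DT/PT input is needed, and the proof of Theorem~\ref{thm:Euler} reduces to verifying, in parallel with Proposition~\ref{prop:check} and Proposition~\ref{prop:check2}, that the subsets $\uU_{X,B}$ and $\pU_{X/Y}$ meet the hypotheses of~\cite[Theorem~3.13]{Tcurve1}; but these hypotheses are weaker than those of Assumption~\ref{assum} used above, so the same verifications apply.
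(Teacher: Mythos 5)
Your proposal is correct and follows essentially the same route as the paper, which itself only sketches the argument: it states that Theorem~\ref{thm:Euler} follows by repeating the proofs of Theorems~\ref{thm:form}, \ref{thm:form2} and \ref{thm:global} with the Euler-characteristic wall-crossing formula of~\cite[Theorem~3.13]{Tcurve1} in place of Theorem~\ref{thm:summ}, noting (as you do) that Propositions~\ref{prop:semi1} and~\ref{prop:semi2} are moduli-stack identifications that carry over unchanged and that the resulting argument does not rely on~\cite{BG}. Your elaboration of the details, including the role of the Euler DT/PT identity and the unweighted MacMahon-type formula, is consistent with the paper's intent.
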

Since we do not assume Conjecture~\ref{conj:BG}
in~\cite[Theorem~3.13]{Tcurve1}, 
we do not rely on 
Conjecture~\ref{conj:BG}
in Theorem~\ref{thm:Euler}. 
Hence the above
Euler characteristic version
is mathematically rigorous. 
\subsection{Generalization of global ncDT-invariants}
The non-commutative Donaldson-Thomas invariant
can be defined in a slightly generalized context. 
Let $f\colon X\to Y$ be a projective birational 
morphism from a smooth projective Calabi-Yau 3-fold $X$, 
satisfying $\dim f^{-1}(y) \le 1$ for any 
closed point $y\in Y$. Here we do not assume that 
$f$ is isomorphic in codimension one, so there 
may be a divisor $E\subset X$ which contracts to a 
curve on $Y$. The result of Van den Bergh~\cite{MVB}
can be applied in this situation, i.e. 
there
are vector bundles $\pE$ 
on $X$ for $p=0, -1$, which admit derived equivalences, 
\begin{align}
\pPhi=\dR f_{\ast} \dR \hH om(\pE, \ast) \colon 
D^b(\Coh(X)) \stackrel{\cong}{\lr} D^b(\Coh(\pAA_{Y})).
\end{align}
The abelian subcategories $\ppPPer(X/Y)\subset D^b(\Coh(X))$
is also similarly defined, and $\pPhi$ restrict
to equivalences between $\ppPPer(X/Y)$ and $\Coh(\pAA_Y)$. 
As in subsection~\ref{subsec:Non}, we can construct the moduli space of
perverse ideal sheaves 
$I_n(\pAA_Y, \beta)$, and the counting 
invariant, 
$$\pAA_{n, \beta}=\int_{[I_n(\pAA_Y, \beta)]^{\rm{vir}}}1 \in \mathbb{Z}.$$
The generating series $\DT(\pAA_Y)$, $\DT_{0}(\pAA_Y)$, 
$\widehat{\DT}(\pAA_{Y})$ and $\widehat{\DT}_{0}(\pAA_Y)$
are similarly defined as in Definition~\ref{def:glncDT}, 
Definition~\ref{def:Euler}. 
The following theorem can be proved along with
the same proof of Theorem~\ref{thm:global} and Theorem~\ref{thm:Euler}. 
\begin{thm}
Assuming Conjecture~\ref{conj:BG}, 
we have the following formula. 
$$\frac{{\DT}(X)}{{\DT}(X/Y)}=
\frac{{\DT}(\pAA_Y)}{{\DT}_0(\pAA_Y)}, \quad 
\frac{\widehat{\DT}(X)}{\widehat{\DT}(X/Y)}=
\frac{\widehat{\DT}(\pAA_Y)}{\widehat{\DT}_0(\pAA_Y)}.$$
\end{thm}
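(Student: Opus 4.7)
The plan is to generalize the proof of Theorem~\ref{thm:global} (and its Euler-characteristic analogue Theorem~\ref{thm:Euler}) by observing that the flop $\phi$ was only used to derive the \emph{rightmost} equality in (\ref{form:global}); the identity $\DT(X)/\DT(X/Y)=\DT(\pAA_Y)/\DT_0(\pAA_Y)$ is proved entirely by moving within the space of weak stability conditions on $\dD_X$, without reference to $X^+$. So the strategy is to reproduce that part of the argument in the present weaker setting.

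First I would verify that the machinery of Section~\ref{sec:weak} carries over. Since $\dim f^{-1}(y)\le 1$ for all $y\in Y$, the result of Van den Bergh still produces vector bundles $\pE$ and equivalences $\pPhi\colon D^b(\Coh(X))\simeq D^b(\Coh(\pAA_Y))$ restricting to $\ppPPer(X/Y)\simeq\Coh(\pAA_Y)$; no codim-one hypothesis is used. The hearts $\aA_X$ and $\pB_{X/Y}$, the noetherianness (Lemma~\ref{lem:noether}), the tilting description (Lemma~\ref{lem:tiltAB}), and the constructions of $\uU_{X,B}$ and $\pU_{X/Y}$ (Lemmas~\ref{lem:pair1} and~\ref{lem:constpair2}) all depend only on the existence of $\ppPPer(X/Y)$ together with $\dim f^{-1}(y)\le 1$, and they adapt verbatim. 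The only modification is the definition of $\pV(X/Y)$: the constraints $(-1)^p B\cdot C_i<0$ must now be imposed for all irreducible curves $C_i$ contained in fibers, and the cone is nonempty because $f$ is projective.

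Next I would check that the analogues of Propositions~\ref{prop:check} and~\ref{prop:check2} hold, i.e. that $\uU_{X,B}$ and $\pU_{X/Y}$ still satisfy Assumption~\ref{assum}. The crucial ingredient is the boundedness of $m(\beta)$ and $\ppm(\beta)$, both of which rest on~\cite[Lemma~3.10]{Tolim} and use only the projectivity of $X$. The boundedness of $\sigma$-semistable objects is proved by the same arguments as in Section~\ref{sec:tech}, using~\cite[Section~3]{Tolim} applied to the non-commutative scheme $(Y,\pAA_Y)$. Once this is in place, the inclusion $\pmV_{X/Y}\subset\overline{\uU}_X$ follows from Lemma~\ref{lem:scont} together with the tilting description of $\pB_{X/Y}$, giving a connected region $\uU_X\cup\oU_{X/Y}\cup\iU_{X/Y}\subset\Stab_{\Gamma_\bullet}(\dD_X)$ on which Theorem~\ref{thm:summ} applies; in particular $\DT'(\sigma)=\DT(\sigma)/\DT_0(\sigma)$ is constant on general points.

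Finally I would identify the two endpoints: along the one-parameter family $\sigma_{\xi(t)}\in\uU_{X,B}$ with $\omega=tH$, Proposition~\ref{prop:semi1} (which also adapts, since its proof only uses $\dim f^{-1}(y)\le 1$ and $m(\beta)>-\infty$) together with Theorem~\ref{mainTcurve} yields
\[
\lim_{t\to\infty}\frac{\DT(\sigma_{\xi(t)})}{\DT_0(\sigma_{\xi(t)})}
=\frac{\PT(X)}{\PT(X/Y)}=\frac{\DT(X)}{\DT(X/Y)},
\]
while Proposition~\ref{prop:semi2}(ii) applied at a point $\sigma_\xi\in\pU_{X/Y}$ with $\arg z_0<\arg z_1$ yields $\DT(\sigma_\xi)/\DT_0(\sigma_\xi)=\DT(\pAA_Y)/\DT_0(\pAA_Y)$. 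Since both endpoints lie in the same connected region, Theorem~\ref{thm:summ} forces the two ratios to agree. The Euler-characteristic version is identical, replacing Theorem~\ref{thm:summ} by~\cite[Theorem~3.13]{Tcurve1}. The main obstacle is the boundedness step for $\pU_{X/Y}$ in the presence of divisorial contractions: one must check that the arguments of~\cite[Section~3]{Tolim} for Gieseker-type boundedness go through for $\pAA_Y$-modules whose support can meet the exceptional divisor, but this is a routine adaptation since $\pAA_Y$ is still a coherent $\oO_Y$-algebra of finite global dimension.
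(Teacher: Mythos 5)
Your proposal is correct and follows essentially the same route as the paper, which itself only remarks that the theorem ``can be proved along with the same proof of Theorem~\ref{thm:global} and Theorem~\ref{thm:Euler}''; you have simply made explicit that the flop enters only in the rightmost equality of (\ref{form:global}) and that the remaining wall-crossing argument between $\uU_{X,B}$ and $\pU_{X/Y}$ carries over once $\pV(X/Y)$ and the boundedness statements are adapted to the presence of contracted divisors. No further comment is needed.
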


Institute for the Physics and 
Mathematics of the Universe, University of Tokyo

\textit{E-mail address}:toda-914@pj9.so-net.ne.jp

\end{document}